\documentclass[11pt,a4paper]{article}

\usepackage{amssymb}
\usepackage{amsmath}
\usepackage{amsthm}
\usepackage[showonlyrefs]{mathtools}
\mathtoolsset{showonlyrefs}
\usepackage[breaklinks=true,colorlinks, linkcolor=blue, anchorcolor=blue, citecolor=blue]{hyperref}

\usepackage[dvips]{graphicx}
\usepackage{psfrag}
\usepackage{amscd}
\usepackage{enumerate}

\setlength{\textwidth}{380pt}
\oddsidemargin = 32pt
\evensidemargin = 18pt
\setlength{\textheight}{610pt}

\numberwithin{equation}{section}

\newcommand{\comm}[1]{}

\def\diam{\operatorname{diam}}
\def\dist{\operatorname{dist}}

\def\crit{\operatorname{Crit}}

\def\Exp{\operatorname{Exp}}

\def\ti{\tilde}
\def\wt{\widetilde}

\def\({\left(}
\def\){\right)}
\def\oli{\overline}
\def\uli{\underline}

\def\raw{\rightarrow}

\def\no={\neq}
\def\sm{\setminus}

\def\C{{\mathbb C}}

\def\BB{{\mathcal B}}

\def\EE{{\mathcal E}}
\def\FF{{\mathcal F}}

\def\JJ{{\mathcal J}}

\def\MM{{\mathcal M}}
\def\NN{{\mathcal N}}

\def\QQ{{\mathcal Q}}

\def\al{\alpha}
\def\be{\beta}
\def\ga{\gamma}
\def\de{\delta}

\def\vep{\varepsilon}

\def\io{\iota}
\def\ka{\kappa}

\def\om{\omega}

\def\De{\Delta}

\theoremstyle{plain}

\newtheorem{Thm}{Theorem}[section]
\newtheorem{Prop}[Thm]{Proposition}
\newtheorem{Lem}[Thm]{Lemma}

\newtheorem{Cor}[Thm]{Corollary}

\newtheorem{Def}[Thm]{Definition}

\newtheorem{Rem}[Thm]{Remark}

\begin{document}

\title{Collet--Eckmann maps in the unicritical family}

\author{Magnus Aspenberg, Mats Bylund and Weiwei Cui}

\date{}

\maketitle

\begin{abstract}
 In this paper we show that Collet--Eckmann parameters in the unicritical family $f_c(z) = z^d + c$, where $d$ is at least $2$, are Lebesgue density points of the complement of the Mandelbrot set.
\end{abstract}

{ \footnotesize Mathematics subject classifications: 37F10, 37F15, 37F46. }

 \section{Introduction}

Consider the unicritical family
$$f_{c}(z)=z^d+c,$$
where $d\geq 2$ is an integer and $c\in\C$ is a parameter. 
This family has been a central object of study in complex dynamics for a long time. One of the main objectives is to understand it from a topological and metric point of view. On the topological side, the famous Fatou conjecture, stating that the set of hyperbolic parameters is open and dense, was settled for real parameters $c$, for $d=2$, by J.~Graczyk and G.~\'Swi\c{a}tek~\cite{GSW}, and independently M.~Lyubich~\cite{ML2}. It was settled in full generality, that is, for real polynomials, by O.~Kozlovski, W.~Shen and S.~van~Strien~\cite{SSK}. On the metric side, seminal works by M.~Lyubich~\cite{ML} and A.~Avila and G.~Moreira~\cite{AM} together show that almost all real quadratic polynomials are either hyperbolic (nice robust maps with an attracting cycle) or expanding in a strong sense (i.e. satisfy the Collet--Eckmann condition, see below).

The Collet--Eckmann condition arose early as a natural condition for the presence of chaotic behaviour and the existence of absolutely continuous invariant measures (acim)~\cite{Collet-Eckmann1, Collet-Eckmann2, Collet-Eckmann3}. The pioneering work of M.~Jakobson~\cite{MJ} showed that parameters in the real quadratic family admitting an acim have positive Lebesgue measure. M.~Rees proved in~\cite{MR} a corresponding celebrated result for rational maps. Later, M.~Benedicks and L.~Carleson proved in the fundamental papers~\cite{BC1, BC2} that the set of CE-parameters in the real quadratic family has positive Lebesgue measure. In the rational setting, a similar result was proven by the first author in~\cite{MA-Z}. 

Let us recall some definitions. A function $f_c$ is called \emph{Collet--Eckmann} if there exist $C>0$ and $\gamma>0$ such that
\begin{equation}\label{cedef}
\vert Df_{c}^{n}(c)\vert \geq Ce^{\gamma n}~\,~\text{for all}~\, n > 0.
\end{equation}
A map $f_c$ in the unicritical family is {\em hyperbolic} if $\om(0) \cap \JJ(f_c) = \emptyset$, where $\JJ(f_c)$ is the Julia set of $f_c$. For $d \geq 2$, the set $\mathcal{M}_d$ is defined as the set of parameters $c$ such that $\JJ(f_c)$ is connected (or equivalently, the set of parameters $c$ for which the critical orbit $\{f^{n}_{c}(0)\}$ is bounded). When $d=2$, the set $\mathcal{M}_2 = \MM$ is the well known \emph{Mandelbrot set}. (We shall also call $\MM_d$ a Mandelbrot set.)

Our paper is concerned with (complex) perturbations of CE-maps in the unicritical family. In~\cite{RL2}, J.~Rivera-Letelier proved that Misiurewicz maps (i.e. critically non-recurrent maps) are Lebesgue density points of $\C \sm \MM_d$. It was generalised by the first author in~\cite{MA5} to rational maps. However, even if Misiurewicz maps behave quite nicely under perturbations, they are rare in the parameter space~\cite{MA3}. 

In a series of papers~\cite{GSW-Harmonic, GSW-Fine, GSW-Struc}, J.~Graczyk and G.~\'{S}wi\c{a}tek studied the geometry of the Julia set around typical maps with respect to harmonic measure on $\partial \MM_d$. Among other things, they show that almost all maps with respect to harmonic measure are critically slowly recurrent CE-maps and density points of $\C \sm \MM$ (we refer to slow recurrence in the sense of G.~Levin, F.~Przytycki and W.~Shen~\cite{Le-Pr-Sh}). 

In the other direction, A.~Avila, M.~Lyubich and W.~Shen proved in~\cite{ALS} that the Lebesgue density of $\MM_d$ at at most finitely renormalizable maps in the unicritical family, is strictly less than one (Theorem 1.4 and Remark 6.1 in~\cite{ALS}). Since Collet--Eckmann maps are at most finitely renormalizable, the result applies to them and consequently CE-maps cannot be density points of $\MM_d$.

Our main theorem complements the above results as follows.

\begin{Thm}\label{mainthm}
In the unicritical family, Collet--Eckmann maps are Lebesgue density points of $\C \sm \MM_d$.
\end{Thm}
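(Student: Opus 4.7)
The plan is to prove density at $c_0$ by exhibiting a sequence of pieces $c_0 \in Q_n$ in parameter space of bounded eccentricity and $\diam Q_n \to 0$ with $|Q_n \cap \MM_d|/|Q_n| \to 0$; a Vitali-type comparison to Euclidean balls then delivers the density point property at $c_0$.

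First I would work in the dynamical plane of $f_{c_0}$. Since (CE) implies at most finite renormalizability, the Yoccoz puzzle is available: fix a non-renormalizable critical puzzle piece $V \ni 0$ whose principal nest $V = V^0 \supset V^1 \supset \cdots \ni 0$ shrinks to $0$, and pull $V$ back by inverse iterates of $f_{c_0}$ along the critical orbit to obtain pieces $P_n \ni c_0$ around the critical value. The (CE) hypothesis guarantees $\diam P_n$ is comparable to $|Df_{c_0}^{n-1}(c_0)|^{-1}$, hence at least exponentially small, and it supplies the Koebe-type distortion control for the inverse branch of $f_{c_0}^{n-1}$ defining $P_n$, so each $P_n$ has bounded shape.

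Second I would transfer to parameter space via the similarity map of Graczyk--\'Swi\c{a}tek, a quasiconformal homeomorphism based on the Tsujii-type identity
$$\frac{\partial}{\partial c} f_c^n(c) \bigg|_{c=c_0} \;\asymp\; Df_{c_0}^{n-1}(c_0),$$
which is asymptotically conformal at scales shrinking to $c_0$. Under this map each $P_n$ corresponds to a parameter piece $Q_n$ of the same shape, and $Q_n \cap \MM_d$ corresponds, up to a distortion factor tending to $1$, to a set $P_n \cap \EE$, where $\EE \subset \JJ(f_{c_0})$ consists of points whose forward $f_{c_0}$-orbit is trapped in $V$ forever. The parameter problem thus reduces to showing $|P_n \cap \EE|/|P_n| \to 0$ in phase space.

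The hardest step is this phase-space density estimate, uniformly over deep central cascades in the principal nest. When the returns of the critical orbit to $V$ are non-central, a uniform one-step density gain $(1-\eta)$ for $\EE$ per level follows from the Avila--Lyubich--Shen strictly-less-than-one density bound \cite{ALS} applied to each first-return piece. Inside a central cascade, however, this level-by-level argument collapses, because pullback distortion degenerates and the ALS gain per level vanishes; this is precisely what forces \cite{GSW-Sim} to settle for weak density. I would handle an entire cascade of length $\ell$ as a single object, exploiting its approximate self-similar, renormalization-like structure: (CE) bounds the derivative at both endpoints of the cascade, which is enough to restore the composite distortion estimate for the pullback across the whole cascade even though every intermediate level is bad, and ALS applied to the non-central induced first-return map that terminates the cascade then yields a density gain strictly less than $1$ per cascade rather than per level. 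Accumulating these per-cascade gains along the nested scales reaching $c_0$ produces $|Q_n \cap \MM_d|/|Q_n| \to 0$, and hence the theorem.
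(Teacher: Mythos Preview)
Your proposal follows the puzzle/similarity-map route of \cite{ALS} and \cite{GSW-Sim}, which is a genuinely different strategy from what the paper does. The paper never touches puzzle combinatorics; it works directly in parameter space via a Benedicks--Carleson exclusion argument, and its new ingredient is a \emph{promotion} mechanism: starting from a small square $\QQ\ni c_0$, one tracks the critical orbit for all $c\in\QQ$, and at each essential return deletes the (exponentially small) set of parameters whose return depth fails to improve by a fixed factor $\ka'<1$. After finitely many such deletions the surviving parameters satisfy an effective slow-recurrence condition $\al_{\nu_J}\le \ti C\,\uli\ga_{\nu_J}$, and from that point the standard large-deviation/escape-time machinery of \cite{BC1,BC2,ABC-1} applies. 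The point is that promotion converts an arbitrary CE map into a slowly recurrent one \emph{in the parameter picture}, sidestepping the cascade problem altogether rather than solving it.

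Your outline has a real gap at exactly the place you flag as hardest. The assertion that ``(CE) bounds the derivative at both endpoints of the cascade, which is enough to restore the composite distortion estimate for the pullback across the whole cascade'' is not justified, and it is precisely the obstacle that kept \cite{GSW-Sim} at weak density. CE controls $|Df_{c_0}^n(c_0)|$, but distortion of the inverse branch of $f_{c_0}^n$ over a puzzle piece depends on how close the \emph{entire} orbit segment comes to $0$; during a deep central cascade those approaches can be essentially as bad as CE permits, and there is no a priori bound on the resulting Koebe distortion or on the density gain supplied by ALS at the terminating non-central return. In particular, nothing you have written shows that the per-cascade gain is bounded away from $1$ uniformly in the cascade depth, and without that the product need not tend to $0$. (A secondary issue: the correspondence $Q_n\cap\MM_d\leftrightarrow P_n\cap\EE$ under the similarity map is not as literal as you state --- the similarity is asymptotically conformal only at the base point, and on a set of positive area the identification carries nontrivial errors that must themselves be controlled.) This is not a matter of missing details; it is the core difficulty of the problem, and the paper's promotion argument is designed specifically to avoid it.
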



\begin{Rem}
One could ask if our result extends to the setting of at most finitely renormalizable maps, i.e., those considered by A.~Avila, M.~Lyubich and W.~Shen. We remark that this is not the case, as was pointed out in~\cite{GSW-Sim} by J.~Graczyk and G.~\'{S}wi\c{a}tek .

In the same paper just mentioned, J.~Graczyk and G.~\'{S}wi\c{a}tek showed that at most finitely renormalizable maps are weak density points of $\C \sm \MM_d$ (again, this applies to CE-maps). Thus our theorem strengthens their result for CE-maps.
\end{Rem}

\comm{
Similar to the Misiurewicz case (which is easier), the goal is to get some kind of similarity between phase and parameter space. One cannot expect a complete (conformal) similarity between them since the Mandelbrot set contains an interior and the Julia set for a CE-map is a dendrite. However, at small scales this similarity becomes asymptotically conformal. The result is obtained by introducing a powerful tool called the similarity map, which transfers the geometry around the critical value $c$ between the Julia set and the Mandelbrot set $\MM_d$ in an intriguing way.
In particular, it is shown in~\cite{GSW-Struc} that typical Collet--Eckmann maps, with respect to harmonic measure, are density points of $\C \sm \MM_d$.
}

The slow recurrence condition is also imposed on the Collet--Eckmann maps in~\cite{ABC-1}, where we proved a corresponding result for rational maps. We also mention an unpublished work by M.~Benedicks and J.~Graczyk, where they prove corresponding results for one-dimensional families (with some regularity) around typical CE-maps with respect to harmonic measure in $ \partial \MM_d$. For more general CE-maps, the dynamical constants can be drastically different. This yields quite wild dynamics, and crucial parts of the traditional methods break down. 

\comm{
\begin{Rem}
 It will be clear from the proof that the methods used in this paper can be applied to get the same conclusion as in Theorem~\ref{mainthm} for rational CE-maps with precisely one critical point in the Julia set, namely that such CE-maps are Lebesgue density points of hyperbolic maps. We also expect that the methods extend to every CE-map for which the Julia set is not the whole sphere, i.e. a generalisation of~\cite{ABC-1}, for spaces of rational maps where the orders of the critical points are fixed. 
\end{Rem}

\begin{Rem}
Combining the previous remark with fact that every CE-map, with only simple critical points, and for which the Julia set is the whole sphere, is a Lebesgue density point of CE-maps for which, again, the Julia set is the whole sphere (see~\cite{Lefevre-thesis}, and earlier~\cite{MA7}), we get that almost all CE-maps of degree $2$ have their Julia set equal to the whole sphere (maps with higher order critical points have zero Lebesgue measure). We expect this to generalise to all degrees. 
\end{Rem}
}

Theorem~\ref{mainthm} is also interesting from a complex analytic viewpoint. Indeed, in~\cite{Smirnov-CE}, S. Smirnov proved that for a set $E$ of angles whose complement has Hausdorff dimension $0$, the external rays with angles in $E$ will land on a Collet--Eckmann parameter on the boundary of the Mandelbrot set. As a consequence, by Theorem~\ref{mainthm}, the Mandelbrot set is an example of a simply connected compact set where a large set of boundary points, in the above sense, has Lebesgue density zero. So from the outside, the Mandelbrot set looks like a one-dimensional object; the interior is not detectable from the outside. 

\comm{
Recall that a rational map satisfies the Collet--Eckmann condition if the map has no parabolic cycles and~\eqref{cedef} is satisfied for each critical point in the Julia set (usually one requires the map to be non-hyperbolic). Let $\mathcal{R}_d$ be the space of rational maps of degree $d\geq 2$.

\begin{Thm}\label{ratdensity}
Any rational Collet--Eckmann map of degree $d$ at least two with only simple critical points and with precisely one critical point in the Julia set is a Lebesgue density point of hyperbolic maps in $\mathcal{R}_d$.
\end{Thm}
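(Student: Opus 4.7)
The plan is to reduce Theorem \ref{ratdensity} to Theorem \ref{mainthm} by choosing holomorphic local coordinates on $\mathcal{R}_d$ near the given rational CE map $f_0$ that isolate the dynamics of the unique Julia critical point from the rest. Write $c_J \in \JJ(f_0)$ for that critical point. Since the CE condition excludes parabolic cycles, the remaining (simple) critical points of $f_0$ lie in the Fatou set and are attracted to attracting or super-attracting cycles; by standard structural stability near attracting cycles, these cycles and the basins containing the non-Julia critical points persist under small perturbations of $f_0$.

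First I would construct, in a neighbourhood of $[f_0]$ in the moduli space $\mathcal{R}_d / \mathrm{PSL}_2(\C)$, holomorphic coordinates $(c,w) \in \C \times \C^k$ with $(0,0) \mapsto [f_0]$ such that: (i) for each fixed $w$ near $0$, the slice $c \mapsto f_{c,w}$ is a one-parameter holomorphic family in which $c_J$ and its critical value move holomorphically and, after the standard affine normalization, take the unicritical form $z \mapsto z^d + c$ on a definite neighbourhood of $c_J$; (ii) the transverse parameter $w$ deforms only the Fatou side (multipliers of the attracting cycles and conformal positions of the Fatou critical points), while fixing the Julia-side combinatorics via a holomorphic motion of finitely many repelling periodic points and puzzle boundaries; (iii) $f_{0,w}$ is CE with constants uniform in $w$. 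This is a standard application of the $\lambda$-lemma and McMullen--Sullivan quasiconformal deformation theory.

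Next I would apply the proof of Theorem \ref{mainthm} slice by slice. All the essential ingredients of that proof — the similarity map transferring the geometry of the Julia set near the critical value to the geometry of the bifurcation locus in parameter space, the puzzle and parapuzzle constructions, the control of escape and return times, and the handling of deep central returns — depend only on the CE estimates at $c_J$ and on the local conformal geometry near its orbit, both of which are uniform in $w$ on a small ball. Consequently, on each slice $w = w_0$, the central parameter $c = 0$ is a Lebesgue density point of the hyperbolicity locus of that slice. A Fubini argument in the $(c,w)$-coordinates then upgrades this to Lebesgue density $1$ of hyperbolic maps at $[f_0]$ in the full moduli space, which is equivalent to Lebesgue density $1$ of hyperbolic maps at $f_0$ in $\mathcal{R}_d$.

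The main obstacle is the first step. The Julia critical point is dynamically unstable and is not directly rigidified by any holomorphic motion of $\JJ(f_0)$, so one has to take care that moving $w$ does not push $c_J$ into the Fatou set — this would trivialize the CE hypothesis and collapse the puzzle construction on nearby slices. The correct way around this is to build the slicing from data \emph{intrinsic} to the Julia dynamics (marked repelling orbits, nice couples, puzzle pieces transported by holomorphic motion) rather than from arbitrary holomorphic coordinates on $\mathcal{R}_d$, so that the persistence of $c_J$ as a genuine Julia critical point across slices is built into the parametrization. Once this decoupling is in place, the dynamical estimates of the polynomial case transfer essentially verbatim, because they are entirely local around $c_J$.
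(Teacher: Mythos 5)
There is a genuine gap, and it sits exactly where you place the main weight: the reduction to Theorem \ref{mainthm}. Arranging coordinates $(c,w)$ so that each slice $c\mapsto f_{c,w}$ ``takes the unicritical form $z\mapsto z^d+c$ on a definite neighbourhood of $c_J$'' does not make the slice a unicritical polynomial family, and Theorem \ref{mainthm} is a statement about that global family, not about maps that merely have a local normal form $z^d+\mathrm{const}$ near the critical point. Your justification that the estimates ``are entirely local around $c_J$'' is not correct: the Collet--Eckmann/parameter-exclusion machinery uses the whole critical orbit, which wanders over the entire Julia set --- the Ma\~n\'e-type expansion outside the critical neighbourhood (Lemma \ref{eafcp}/\ref{oel}), the start-up lemma, the main distortion estimate along the full orbit, and, crucially, transversality of the family at the CE parameter. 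In the unicritical case transversality is the Avila--Levin result for the specific parameter $c$ in $z^d+c$; for a slice inside $\mathcal{R}_d$ there is no reason an arbitrarily chosen coordinate direction is non-degenerate, and one needs Levin's general transversality theorem for rational CE maps --- this is also precisely where the hypothesis of simple critical points enters, which your argument never uses. (As a side remark, the ingredients you propose to transport --- the similarity map, puzzles and parapuzzles --- are from the Graczyk--\'Swi\c{a}tek and Avila--Lyubich--Shen works discussed in the introduction; the proof of Theorem \ref{mainthm} in this paper is a Benedicks--Carleson parameter exclusion with the promotion mechanism, so even the toolkit you intend to transfer is not the one the theorem rests on.)

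The intended route, and the one the paper takes, is not a reduction but a repetition: the whole argument (bound/free periods, main distortion, promotion, large deviations) is set up for maps with a single critical point in the Julia set and applies verbatim to the rational family, once the unicritical transversality lemma is replaced by Levin's transversality for rational CE maps; one then runs the exclusion argument on one-dimensional slices of the (higher-dimensional) parameter space and obtains the density estimate on each slice, concluding by a Fubini argument. Your slicing-plus-Fubini outer layer matches this, and parts of your transverse construction (Fatou-side deformations, persistence of CE along the $w$-direction via the qc-invariance of TCE and the equivalence TCE$=$CE for one Julia critical point) could be salvaged, but as written the core step --- invoking Theorem \ref{mainthm} on each slice --- is unjustified and would have to be replaced by rerunning its proof in the rational setting with the correct transversality input.
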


This also partially improves our previous result in~\cite{ABC-1} where we imposed the additional assumption of slow recurrence of the critical points in the Julia set.

\begin{Rem}
In case that rational maps have critical points of higher multiplicities, one can also get a corresponding result but in a subspace of $\mathcal{R}_d$. This subspace was defined by G.~Levin~\cite{Levin-book}.
\end{Rem}

}

\bigskip

\noindent\emph{Ideas of the proof.} We trace the critical orbits for all parameters close to a Collet--Eckmann map and use the parameter exclusion technique developed by M.~Benedicks and L.~Carleson in the seminal works~\cite{BC1, BC2}. However, since we start with arbitrary Collet--Eckmann maps, the dynamical constants, e.g. the recurrence rate, can be quite wild in general (deep central returns) and traditional methods break down. This difficulty will be overcome by introducing the idea of \emph{promotion}. Roughly speaking, promotion means that almost all parameters will reach a condition arbitrarily close to the slow recurrence condition. This technique is one of the core novelties and fundamental for the result of the paper.

\bigskip

\noindent\emph{Acknowledgement.} Partially this work was written when all the authors were participating in the program ``Two Dimensional Maps'' at Mittag-Leffler Institute during the spring 2023. We thank the organisers for providing a nice working place. We are also grateful to Michael Benedicks for stimulating discussions on the topic of this paper. The first author gratefully acknowledges partial funding by the Swedish Research Council. The second author gratefully acknowledges support from the Knut and Alice Wallenberg Foundation. 
The third author was partially supported by NSF of China (No. 12401105, 12522108), Qingdao Natural Science Foundation (No. 24-4-4-zrjj-8-jch), Shandong Provincial Natural Science Fund for Excellent Young Scientists Program (Overseas) (No. 2025HWYQ-021) and also a grant from Vergstiftelsen during his stay in Sweden.

\section{Preliminaries}

We collect some preliminary notions and notations in this section. For some general notions such as the Fatou and Julia sets, and other related notions in complex dynamics, we refer to~\cite{CG,Milnor-book}. We will always use the Euclidean metric in this paper and thus distances and diameters are measured with respect to this metric. The notation $\dist(A, B)$ means the Hausdorff distance between two sets $A$ and $B$, and $\diam(A)$ denotes the diameter of $A$. Also, $m(A)$ is the two-dimensional Lebesgue measure (i.e., area) of the set $A$.



Throughout the paper we fix $d\geq 2$. Let $f_{c_0}$ be a Collet--Eckmann unicritical polynomial of degree $d$. The only critical point is $0$ with the corresponding critical value being $c_0$. Let $\QQ=\QQ(c_0,\varepsilon)$ be a square of side length $\varepsilon$ in the parameter plane centred at the parameter $c_0$ with $\varepsilon>0$ to be sufficiently small (cf. Lemma~\ref{levin} and Lemma~\ref{oel}). For a parameter $c\in\QQ$ the corresponding map is denoted, as usual, by $f_c$. We study the evolution of the critical orbit for nearby parameters. Let
\[
\xi_n(c)=f_{c}^{n}(0)\quad\text{for}\quad n\geq 0.
\]
We will need to fix some neighbourhood for the critical point and study recurrence of $\xi_n(c)$ to this neighbourhood for $c$ close to $c_0$ (cf. Lemma~\ref{oel}). Let $0<\Delta'<\Delta$ be large integers to be determined later and put
\begin{equation}\label{uu'}
U=D(0, e^{-\Delta})\quad\text{and}\quad U'=D(0, e^{-\Delta'}).
\end{equation}
In the following, we also put $\delta=e^{-\Delta}$ and $\delta'=e^{-\Delta'}$. 

Starting with a Collet--Eckmann map, we have exponential growth of derivative along the critical orbit. This expansion will be inherited by nearby parameters with possibly slightly slower growth at the initial piece of their critical orbits. This is ensured by the transversality at Collet--Eckmann parameters (see Lemma~\ref{levin}). To gain growth for later orbits, we divide the orbit into pieces: \emph{bound periods} and \emph{free periods}. During the bound periods, the orbits shadow the initial iterates of its critical orbit and expansion is ensured (see Section 3.1). During free periods, the orbits stay away from the critical point and hence gain exponential growth of derivatives (Lemma~\ref{oel}) similar as the classical Ma\~n\'e theorem (Lemma~\ref{eafcp}).

In the rest of this section we present necessary notions which are crucial in our later analysis and also some other related results concerning recurrence rates of Collet--Eckmann maps, expansions during bound periods and finally the transversality result.

\subsection{Recurrence, bound periods and partitions}

With the critical neighbourhoods defined in~\eqref{uu'}, we say that $\xi_n(c)$ is a \emph{return} if $\xi_n(c)\in U'$. It is a {\em pseudo-return} if $\xi_n(c)\in U' \sm U$. Sometimes we specify and say that, for instance, $\xi_n(c)$ is a return into $U$. If $\xi_n(c)$ is a return, the following orbit will be shadowed by its early piece of the critical orbit of $f_{c}$ until they separate in the following sense. The number $n$ in this case is called a return time. 

\begin{Def}\label{pwbound}
Let $\be > 0$. Let $\xi_n(c)$ be a return to $U'$. The bound period for this return is defined as the indices $j>0$ such that
\[
|\xi_{n+j}(c)-\xi_j(c)|\leq e^{-\beta j}|\xi_j(c)|.
\]
The largest number $p$ with the above inequality is called the length of the bound period (for this return).
\end{Def}

The number $\be > 0$ should be thought of as a comparatively small number. There will be some conditions imposed on $\be$, e.g. in Section~\ref{large-dev}. 

After the bound period ends, the two critical orbits will then separate and stay away from critical points until the next return happens. A return might be deep in the sense that $\xi_n(c)$ is quite close to $0$. To achieve our goal, we will need to disregard parameters with very deep returns. For later purpose we will define $\alpha_n(c)$ for a parameter $c$ as follows:
\[
|\xi_n(c)|= e^{-\alpha_n(c) n} \quad\text{for all}\quad n.
\]
It is clear that if one is considering a Misiurewicz or semi-hyperbolic parameter, then $|\xi_n(c)|$ has a uniform lower bound. For general Collet--Eckmann parameters, this need not be true. The map $f_c$ is (critically) {\em slowly recurrent} (see e.g.~\cite{Le-Pr-Sh}) if $$\lim\limits_{n \raw \infty} \al_n(c) = 0.$$

Recall that the lower and upper Lyapunov exponents at the critical value $c$ are defined respectively as
\[
\underline{\gamma}(c):=\liminf_{n\to\infty}\frac{1}{n}\log|Df_{c}^{n}(c)| \quad\text{and}\quad\overline{\gamma}(c):=\limsup_{n\to\infty}\frac{1}{n}\log|Df_{c}^{n}(c)|
\]
The Collet--Eckmann condition simply means that $\underline{\gamma}(c_0)>0$ and that $f$ is not hyperbolic. Let $\gamma_n(c)$ be defined such that
$$|Df_{c}^{n}(c)|=e^{\gamma_n(c) n} \quad\text{for all~}~n,$$
where $\gamma_n(c)$ depends on the parameter $c$. Note also that $|Df_{c}^{n}(z)|$ grows at most exponentially in $n$ for any $c$ and any $z$. So $\overline{\gamma}(c)$ indeed has an upper bound. Both $\al_j=\al_j(c)$ and $\ga_j=\ga_j(c)$ depend on the parameter, but we usually suppress it. 
Suppose that the starting map $f=f_{c_0}$ satisfies, for some $C_0 > 0$ and $\ga > 0$, 
\[
|Df_{c_0}^n(c_0)| \geq C_0 e^{\ga n} \quad\text{ for all ~}\, n > 0.
 \]
In other words, $0 < \ga \leq \uli{\ga}(c_0)$. We now make the following definition. Let $\ti{N} > 0$ be a number which satisfies $\ga_k(c_0) = \frac{1}{k} \log |Df_{c_0}^{k}(c_0)| \geq (9/10) \uli{\ga}(c_0) > 0$, for all $k \geq \ti{N}$, and that $\ga_{\ti{N}}(c) = \frac{1}{\ti{N}} \log |Df_{c}^{\ti{N}}(c)| > (9/10) \uli{\ga}(c_0)$ for all $c \in \QQ$. Moreover, we also require that all bound periods for return times $n$ considered in the paper are larger than $\ti{N}$ (so $n$ is typically much larger than $\ti{N}$). This can be achieved by choosing $\de'$ small enough (see the Definition~\ref{pwbound}). 
\begin{Def}\label{EEga}
Let $\gamma>0$. We say that $c \in \EE_n(\ga)$ if $|Df_c^k(c)| \geq e^{\ga k}$ for all $\ti{N} \leq k \leq n$. 
\end{Def}
Note that we drop the constant $C_0$ in front, which appears in the starting function $f_{c_0}$. So in principle, $c_0 \in \EE_n(\uli{\ga}(c_0)- \vep)$, for very small $\vep > 0$. 
In our analysis later, we will specify the constant $\gamma$ that suits our needs.

Using the chain rule
\begin{equation}\label{initial}
\begin{aligned}
\left|Df_{c}^{n}(c)  \right|=\left|Df_{c}^{n-1}(c) \right|\,\left|Df_{c}(f_{c}^{n-1}(c)) \right|&=\left|Df_{c}^{n-1}(c) \right| \left|Df_{c}(\xi_n(c)) \right|\\
&=\left|Df_{c}^{n-1}(c) \right| d |\xi_n(c)|^{d-1},
\end{aligned}
\end{equation}
we get that
\begin{equation}\label{alpha_gamma_relation}
e^{\gamma_n n}=e^{\gamma_{n-1}(n-1)}\,d\,e^{-\alpha_n n(d-1)}.
\end{equation}
The following simple inequalities, for $n$ large enough,
\begin{equation}\label{alpha_gamma_1}
\gamma_n \geq \gamma_{n-1} - (d-1)\alpha_n - \frac{C}{n}
\end{equation}
and
\begin{equation}\label{alpha_gamma_2}
\gamma_{n-1} \geq \gamma_n + (d-1)\alpha_n - \frac{C}{n}
\end{equation}
will be used throughout the paper. Here $C$ can be chosen to be $\log d$.


Our first observation is that one still has some control over the recurrence of the critical point for a Collet--Eckmann map.
\begin{Lem}[Initial recurrence rate]\label{initial-lemma}
Let $f_{c}$ be a Collet--Eckmann map. Then there exist $K>0$ and $\alpha>0$ such that
$$\left|\xi_n(c) \right|\geq Ke^{-\alpha n} ~\,~\text{for all}~\,n\geq 0.$$
\end{Lem}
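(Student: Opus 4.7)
The plan is a short chain-rule argument. The CE condition bounds $|Df_c^n(c)|$ from below by $Ce^{\gamma n}$, while boundedness of the critical orbit (built into the CE setting, since CE maps lie in $\MM_d$) provides an exponential upper bound on $|Df_c^{n-1}(c)|$. Comparing the two via the recursion \eqref{initial} forces $|\xi_n(c)|$ itself to be bounded below by an exponential.

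In detail, I would first pick $B > 0$ with $|\xi_k(c)| \leq B$ for all $k \geq 0$, so that $|f_c'(\xi_k(c))| = d|\xi_k(c)|^{d-1} \leq dB^{d-1} =: M$, and the chain rule gives $|Df_c^{n-1}(c)| \leq M^{n-1}$. Combining this with \eqref{initial},
\[
d\,|\xi_n(c)|^{d-1} \;=\; \frac{|Df_c^n(c)|}{|Df_c^{n-1}(c)|} \;\geq\; \frac{Ce^{\gamma n}}{M^{n-1}},
\]
and extracting the $(d-1)$-th root yields $|\xi_n(c)| \geq K e^{-\alpha n}$ with $\alpha = \max\bigl\{0,\,(\log M - \gamma)/(d-1)\bigr\}$ and a suitable $K > 0$ depending on $C, d, M, \gamma$. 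An equivalent formulation uses \eqref{alpha_gamma_2}: one has $\gamma_{n-1} \geq \gamma_n + (d-1)\alpha_n - C/n$, and since $\gamma_{n-1} \leq \log M$ while $\gamma_n \geq \gamma$, the $\alpha_n$ are uniformly bounded above by a constant $\alpha$ for all sufficiently large $n$, which is exactly the desired statement.

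There is no genuine obstacle. The only small points of care are (i) the fact that $\xi_0(c) = 0$, so the stated range $n \geq 0$ should be read as $n \geq 1$, with the constant $K$ chosen to absorb finitely many initial iterates, and (ii) the implicit use of boundedness of the critical orbit, which is automatic for CE parameters since these lie in $\MM_d$.
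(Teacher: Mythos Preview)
Your proof is correct and follows the same chain-rule comparison as the paper's: both divide the CE lower bound for $|Df_c^n(c)|$ by an upper bound for $|Df_c^{n-1}(c)|$ via \eqref{initial}, the only difference being that you use the trivial sup-norm bound $M^{n-1}$ coming from boundedness of the orbit, whereas the paper uses the sharper upper Lyapunov exponent bound $e^{(\overline{\gamma}+\epsilon)(n-1)}$. This gives the same conclusion with a coarser $\alpha$; it is worth noting that the paper's explicit value $\alpha=(\overline{\gamma}-\underline{\gamma}+2\epsilon)/(d-1)$ is exploited immediately afterwards (Remark~\ref{gamma-ulioli}) to deduce slow recurrence when $\underline{\gamma}=\overline{\gamma}$, which your cruder constant would not yield.
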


\begin{proof}
From the definition of the upper and lower Lyapunov exponents, given any $\epsilon > 0$ we can find $N \geq 0$ such that, with $\uli{\ga} = \uli{\gamma}(c)$ and $\oli{\gamma} = \oli{\gamma}(c)$, 
\[
e^{(\underline{\gamma} - \epsilon)n} \leq \vert Df_c^n(c) \vert \leq e^{(\overline{\gamma} + \epsilon)n} \quad (n \geq N).
\]
By~\eqref{initial}, we have
$$d |\xi_n(c)|^{d-1}=\frac{\left|Df_{c}^{n}(c)  \right|}{\left|Df_{c}^{n-1}(c) \right|}\geq \frac{e^{(\underline{\gamma}-\epsilon)n}}{e^{(\overline{\gamma}+\epsilon)(n-1)}} = e^{\overline{\gamma} + \epsilon} e^{-(\overline{\gamma}-\underline{\gamma} + 2\epsilon)n} \quad(n\geq N).$$
Therefore, there exists a constant $K > 0$ such that
$$|\xi_n(c)|\geq K e^{-\alpha n} \quad (n \geq 1),$$
where $\alpha=(\overline{\gamma}-\underline{\gamma} + 2\epsilon)/(d-1)$.
\end{proof}
By Lemma~\ref{initial-lemma}, for a Collet--Eckmann map the numbers $\alpha_n$ cannot tend to $\infty$, so the critical orbits cannot return too close to the critical point. 
\begin{Rem} \label{gamma-ulioli}
 We notice from the above lemma that if the Lyapunov exponent exists, i.e. if $\underline{\gamma} = \overline{\gamma}$, then the map satisfies the so-called slow recurrence condition. In this situation Theorem~\ref{mainthm} follows from~\cite{ABC-1}. (Compare also with~\cite{GSW-Fine,GSW-Struc}.) In what follows, we therefore assume that $\underline{\gamma} < \overline{\gamma}$.

 
Actually, we take the upper exponent $\oli{\ga}$ to be a trivial upper bound for all exponents in the cube $\QQ$ at all times; i.e. $$\oli{\ga} = \sup\limits_{|z| \leq 2, c \in \QQ} \log |Df_c(z)|,$$
so that we have $\ga_n(c) \leq \oli{\ga}$, for all $n > 0$ and all $c \in \QQ$. Also, in order to have some space to play with when we change the parameter, we define
\[
\uli{\gamma} = \frac{9}{10} \uli{\ga}(c_0).
\]
The number $\uli{\gamma}$ will stand as a kind of lower Lyapunov exponent, at least up to the start-up time $N$, described in Lemma~\ref{startup}. Up until this time, we have $\al_n(c) < \oli{\ga} - \uli{\ga}$, ($\ti{N} \leq n \leq N$), for $c \in \QQ$. 
\end{Rem}


Although a Collet--Eckmann map has controlled recurrence rates in the sense of Lemma~\ref{initial-lemma}, nearby parameters may return very deep due to the expansion. As mentioned before, we will delete these parameters in our analysis. For this purpose we need the notion of partition elements.

\begin{Def}[Partition elements]\label{partitions}
Let $S>0$ be given. A square $A\subset\QQ$ is called a partition element at time $n$ if the following holds for all $k\leq n$:
\begin{equation}
\diam\xi_{k}(A)\leq
\begin{cases}
~\,~\dfrac{\dist(\xi_{k}(A), 0)}{(\log \dist(\xi_{k}(A), 0))^2} ~&\text{if}\,~\xi_{k}(A)\cap U\neq \emptyset,\\
\quad~\,~S ~&\text{if}\,~\xi_{k}(A)\cap U= \emptyset.
\end{cases}
\end{equation}
\end{Def}
For clarification, in the rest of the paper we will write $A_{n}$ for a partition element at time $n$. More specifically, we use $A_n(c)$ for a partition element at time $n$ that contains the parameter $c$. It is useful to have in mind that partitions are getting finer and finer as time proceeds; in other words, $A_{n+1}(c)\subset A_n(c)$ for all $n$.

The term $S$ is referred as \emph{large scale} in this paper. It is always a good situation if a partition element reaches the large scale. However, this is not clear for all partition elements. Our main effort is to show that almost all partition elements will reach the large scale. Clearly, the obstruction to reach the large scale is caused by the frequent passages near the critical point. 

For a partition element, we can also give a notion of returns. Let $A$ be a partition element at time $k-1$. Then $\xi_k(A)$ is called a \emph{return} to $U$ if $\xi_k(A)\cap U\neq\emptyset$ and a {\em return} to $U'$ if $\xi_k(A)\cap U'\neq\emptyset$. It also may happen that $\xi_k(A)\cap U'\neq\emptyset$ but $\xi_k(A)\cap U=\emptyset$. In this case we say that $\xi_k(A)$ is a \emph{pseudo-return}. Now bound periods can also be defined similarly if a return occurs for a partition element.

\begin{Def}\label{bound_period}
Suppose that $A$ is a partition element at time $n$. The bound period for $A$ is the indices $j>0$ such that for any $c_1, c_2\in A$ and for any $z\in\xi_n(A)$ the following holds:
\[
|f_{c_1}^{j}(z)-\xi_j(c_2)|\leq e^{-\beta j}|\xi_j(c_2)|.
\]
The length of this bound period, denoted by $p$, is the largest number such that the above relation holds. 
\end{Def}

After bound periods end, the next return will be called a \emph{free return}. At this point, there are two possibilities: essential and inessential returns. Let $A_{n-1}$ be a partition element. A return $\xi_n(A_{n-1})$ to $U$ or $U'$ is \emph{essential} if
\[
\diam(\xi_n(A_{n-1})) \geq \frac{1}{2} \dfrac{\dist(\xi_{n}(A_{n-1}), 0)}{(\log \dist(\xi_{n}(A_{n-1}), 0))^2}.
 \]
 Otherwise the return is said to be \emph{inessential}.

 Suppose that $\xi_n(A_{n-1})$ is an essential return to $U$. Then partition at time $n$ is made from $A_{n-1}$ as follows. If $A_{n-1}$ is a partition element according to Definition~\ref{partitions} then we do not partition further, i.e. we put $A_{n} = A_{n-1}$. If not, we partition $A_{n-1}$ into four squares of equal size. If all of them are partition elements, then we are done. If not, we continue with each square not being a partition element, and partition it into four new squares of equal size. Continuing in this way we obtain a partition of $A_{n-1}$ into sub squares $A_n^j$ satisfying 
\begin{equation}\label{essele}
\frac{1}{2} \dfrac{\dist(\xi_{n}(A_n^j), 0)}{(\log \dist(\xi_{n}(A_n^j), 0))^2} \leq \diam(\xi_n(A_n^j)) \leq \dfrac{\dist(\xi_{n}(A_n^j), 0)}{(\log \dist(\xi_{n}(A_n^j), 0))^2}.
 \end{equation}
Thus each $A_{n}^{j}$ is a partition element at time $n$.

Now we make another definition. After the partition is made as above, for each set $\xi_n(A_n^j)$ put 
$\ti{r} = -\log (\dist(\xi_n(A_n^j),0))$. Define $r = \lceil \ti{r} - \frac{1}{2} \rceil$. Then we have 
 \[
e^{-r-1/2} \leq \dist(\xi_n(A_n^j), 0) < e^{-r+1/2}.
\]
If this is the case we also write $\dist(\xi_n(A_n^j),0) \sim e^{-r}$. In general we write $A \sim B$ if there is a non-dynamical constant $C$ such that
\[
\frac{1}{C} A \leq B \leq CA.
 \]
 
 \subsection{Free periods}
 
 After the end of a bound period, we enter into a \emph{free period} before the next return happens. During this time window, the orbit is away from critical points which gives growth of derivatives similarly as the classical Ma{\~n}\'e theorem. The following is a Ma{\~n}\'e type result which says that a Collet--Eckmann map is expanding away from critical points. A similar lemma was proved in~\cite{ABC-1}. 
 
 In the following, we state and prove the result for rational maps satisfying the so-called \emph{exponential shrinking property}. A rational map $f$ is said to satisfy this property if there exist $\lambda_{\Exp}>1$ and $r>0$ such that for every $z\in\JJ(f)$, each $n>0$ and every component $W$ of $f^{-n}(D(z,r))$ one has $\diam W\leq \lambda_{\Exp}^{-n}$. This property is equivalent to the \emph{topological Collet--Eckmann condition} and is, in general, weaker than the Collet--Eckmann condition; see~\cite{PRS}. For the setting considered in this paper, i.e. maps with exactly one critical point in the Julia set, these conditions are all equivalent. Let $\crit(f)$ be the set of critical points of $f$.

\begin{Lem}\label{eafcp}
Let $f$ be rational satisfying the exponential shrinking property. Then there exist a neighbourhood $U'$ of $\crit(f)\cap\JJ(f)$, $C>0$ and $\gamma>0$ such that the following holds: Let $z\in\JJ(f)$ and if $f^{j}(z)\not\in U'$ for $0\leq j\leq n-1$, then
$$|Df^{n}(z)|\geq Ce^{\gamma n}.$$
\end{Lem}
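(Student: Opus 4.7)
The plan is to deduce this Mañé-type expansion from the exponential shrinking property by pulling back a small disc along the forward orbit of $z$ and applying the Koebe distortion theorem to the resulting univalent inverse branches.

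First I would fix the scales. Let $r>0$ and $\lambda_{\Exp}>1$ be the constants appearing in the exponential shrinking property for $f$. Choose $r_0\in(0,r)$ so small that the discs of radius $2r_0$ about the points of $\crit(f)\cap\JJ(f)$ are pairwise disjoint, and, after replacing $f$ by a fixed iterate $f^N$ if needed (which enlarges the effective shrinking rate from $\lambda_{\Exp}$ to $\lambda_{\Exp}^N$), assume that $\lambda_{\Exp}^{-1}<r_0$. Then set
\[
U' \;=\; \bigcup_{c\in\crit(f)\cap\JJ(f)} D(c,2r_0).
\]

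Now take $z\in\JJ(f)$ with $f^j(z)\notin U'$ for $0\leq j\leq n-1$, let $V_0=D(f^n(z),r_0)$, and inductively let $V_k$ be the connected component of $f^{-k}(V_0)$ containing $f^{n-k}(z)$. The exponential shrinking property delivers $\diam V_k\leq\lambda_{\Exp}^{-k}$ for every $k\geq 0$. The key point is that $f^k\colon V_k\to V_0$ is then a conformal isomorphism for every $1\leq k\leq n$: the point $f^{n-k}(z)\in V_k$ lies outside $U'$, so its distance to $\crit(f)$ is at least $2r_0$, while $\diam V_k\leq\lambda_{\Exp}^{-1}<r_0$, and therefore $V_k$ cannot meet any critical point.

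Applying the Koebe distortion theorem to the univalent branch $f^n\colon V_n\to V_0$ (restricted to the half-sized target disc) produces an absolute constant $K>0$ with
\[
|Df^n(z)|\;\geq\;K\,\frac{\diam V_0}{\diam V_n}\;\geq\;K\,r_0\,\lambda_{\Exp}^{n},
\]
which gives the claim with $C=Kr_0$ and $\gamma=\log\lambda_{\Exp}$. The main obstacle, and the reason for the coupled choice of $r_0$ and $U'$, is securing the univalence of all $n$ inverse branches; when $\lambda_{\Exp}$ is only slightly larger than $1$ the bound $\lambda_{\Exp}^{-1}<r_0$ fails at the natural scale, and one bypasses this by running the whole argument with $f^N$ in place of $f$, then splitting $n=Nq+s$ with $0\leq s<N$ and absorbing the bounded contribution of the last $s$ iterates into the constant $C$.
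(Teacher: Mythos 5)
Your overall strategy (pull back a disc along the orbit, use exponential shrinking to keep the pullbacks away from the critical points, then apply Koebe) is the same as the paper's, but the step that is supposed to secure univalence of the inverse branches has a genuine gap, and it is exactly the step your ``replace $f$ by $f^N$'' maneuver was meant to fix. For $f^k\colon V_k\to V_0$ (or $g^k$ with $g=f^N$) to be a conformal isomorphism you need $V_k$ to contain no critical point of the \emph{iterate}, i.e.\ you need \emph{every} intermediate single-step pullback $f^i(V_k)$, $0\le i<Nk$, to avoid $\crit(f)$. Passing to $g=f^N$ only improves the diameter bound at the depths that are multiples of $N$; the intermediate components at shallow depth $j$ (those with $1\le j<N$, say) are still only bounded by $\lambda_{\Exp}^{-j}\le\lambda_{\Exp}^{-1}$ with the \emph{original} $\lambda_{\Exp}$, which may be far larger than $r_0$ when $\lambda_{\Exp}$ is close to $1$ — precisely the situation you introduced $f^N$ to avoid. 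Equivalently, $\crit(f^N)$ contains all $f^{-j}(\crit(f))$, $j<N$, and the hypothesis $f^j(z)\notin U'$ gives no lower bound on the distance of the orbit to these preimages, so ``$V_k$ avoids $\crit(f)$'' does not imply that $g^k$ is unbranched on $V_k$. (There is also a smaller glossed point: being outside $U'$ only bounds the distance to $\crit(f)\cap\JJ(f)$; critical points in the Fatou set have to be excluded by noting they lie at a definite distance from $\JJ(f)$, which contains the centre of each component.)

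The missing ingredient is a second, smaller radius rather than a higher iterate. Exponential shrinking gives $\diam W_k\le\lambda_{\Exp}^{-k}<\delta'$ only for $k\ge\ell$, where $\ell$ depends on $\delta'$ and $\lambda_{\Exp}$ but not on $z$ or $n$; for those depths the components avoid the critical points because their centres $f^{n-k}(z)$ lie outside $U'$. For the finitely many shallow depths $k<\ell$ one shrinks the disc being pulled back from radius $\delta'$ to some $\delta''=\delta''(\delta')$: pullback components of a sufficiently small disc under a bounded number of iterates of the fixed rational map $f$ are uniformly small (a compactness statement independent of $n$ and $z$), hence also avoid $\crit(f)$. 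With that modification all branches $f^k\colon W''_k\to D(f^n(z),\delta'')$ are univalent and your Koebe estimate goes through verbatim (with $\delta''$ in place of $r_0$), which is exactly how the paper argues; note also that this route lets $U'$ be an arbitrarily small neighbourhood of $\crit(f)\cap\JJ(f)$, which is needed later in the paper, whereas your construction forces $U'$ to have radius at least $2\lambda_{\Exp}^{-1}$.
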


\begin{proof} Put $z_n = f^n(z)$ and $z_0 = z$. 
Since $f$ satisfies the exponential shrinking property, per definition one can find $\lambda_{\Exp}>1$ and $r>0$ such that preimage components shrink exponentially. Take any $\delta'\in(0,r]$ and put
$$W'_k:=f^{-k}(D(z_n,\delta')),$$
the component which contains $z_{n-k}$. Thus one has
$$\diam W'_k\leq \lambda_{\Exp}^{-k}$$
for all $k\leq n-1.$ So there exists $\ell\in\mathbb{N}$ such that
$$\diam W'_k \leq \lambda_{\Exp}^{-k}<2\delta'$$
for all $k\geq \ell$. Since $f^{j}(z)\not\in U'$ for $j<n$ which means that $|f^{j}(z)-c|>\delta'$ for all $j<n$ and all $c\in\crit(f)\cap\JJ(f)$, we see that
$$W'_{k}\cap(\crit(f)\cap\JJ(f))=\emptyset$$
for all $k\geq \ell$. This implies that there exists $0<\delta'':=\delta''(\delta')<\delta'$ such that if $W''_k\subset W'_k$ is the component of $f^{-k}(D(z_n,\delta''))$ we have
$$f^{k}: W''_k\to D(z_n,\delta'')$$
is conformal and onto for all $k\leq n$. By the above choice of $\delta''$ and the Koebe one-quarter theorem,
$$\lambda_{\Exp}^{-n}>\diam W''_n\geq \frac{1}{2}\frac{\delta''}{|Df^n(z_0)|}.$$
This gives
$$|Df^n(z)|\geq \frac{\delta''}{2}\lambda_{\Exp}^{n}=:Ce^{\gamma n}.$$
Here $C=\delta''/2$ and $\gamma=\log\lambda_{\Exp}$. 

So the lemma holds with $U'=\bigcup_{c\in\crit(f)\cap\JJ(f)}D(c,\delta')$.
\end{proof}

The following lemma follows from the above and is proved in~\cite[Lemma 3.10]{ABC-1}. To state the result, let $\mathcal{N}_{\varepsilon_0}$ be a neighbourhood of the Julia set $\JJ(f_{c_0})$, where $\varepsilon_0$ is chosen in a way such that $U'\subset \mathcal{N}_{\varepsilon_0}$. That is,
\begin{equation}\label{nbjulia}
\mathcal{N}_{\varepsilon_0}=\left\{z\in\C:\, \dist(z, \JJ(f_{c_0}))<\varepsilon_0 \right\}.
\end{equation}

Now we return to Collet--Eckmann maps in the unicritical family. Recall that $U, U', \Delta$ and $\Delta'$ are defined in~\eqref{uu'}.
 
 \begin{Lem}\label{oel}
Let $f_{c_0}$ be a Collet--Eckmann map. Then there exist $\Delta'>0$ sufficiently large, $C>0$ and $\gamma_H>0$ such that the following holds. For any $\Delta>\Delta'$ there exist $\varepsilon>0$ sufficiently small and $C_U>0$ (dependent on $U$) such that for all $n\geq 1$ and for all $c \in \mathcal{Q}$, if $ z,f_c(z),\dots f_c^{n-1}(z) \in \mathcal{N}_{\varepsilon_0} \setminus U$, then
\begin{equation}\label{outside_no_hit}
\vert Df_c^n(z) \vert \geq C_U e^{\gamma_H n}.
\end{equation}
If, in addition, $f_c^n(z) \in U$, then
\begin{equation}\label{outside_hit}
\vert Df_c^n(z) \vert \geq C e^{\gamma_H n}.
\end{equation}
\end{Lem}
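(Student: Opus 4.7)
My plan is to reduce to Lemma \ref{eafcp}, applied to the starting Collet--Eckmann map $f_{c_0}$, and then transfer the resulting Mañé-type estimate to nearby parameters $c \in \QQ$, with a separate bookkeeping step to handle visits to the thin annulus $U' \setminus U$. Since $f_{c_0}$ has precisely one critical point in its Julia set and is Collet--Eckmann, it satisfies the exponential shrinking property, so Lemma \ref{eafcp} produces a neighbourhood $U^\ast$ of $0$ and constants $C^\ast, \gamma_H > 0$ such that every $f_{c_0}$-orbit remaining in $\JJ(f_{c_0}) \setminus U^\ast$ expands at rate $e^{\gamma_H n}$. Using that $f_{c_0}$ has no non-repelling periodic cycles in $\C$, a standard argument (Koebe on inverse branches away from the critical set) extends the estimate from $\JJ(f_{c_0})$ to the neighbourhood $\mathcal{N}_{\varepsilon_0}$, after shrinking $\varepsilon_0$ if necessary.

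Next I would fix $\Delta'$ large enough that $U' \subset U^\ast$ and argue that for $c$ in a sufficiently small cube $\QQ$ about $c_0$ the same Mañé-type bound persists for $f_c$, with only a small loss in the constants. This can be obtained either by invoking the stability of the topological Collet--Eckmann condition under small holomorphic perturbation in the unicritical family, or more directly by a shadowing argument: since $Df_c(z) = dz^{d-1}$ is independent of $c$, the orbits of $f_c$ track those of $f_{c_0}$ on bounded sets for time comparable to $|\log \varepsilon|$, which is enough to transplant the estimate after shrinking $\QQ$. This already yields \eqref{outside_no_hit} and \eqref{outside_hit} whenever the orbit avoids the larger neighbourhood $U^\ast$.

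It remains to deal with the relaxed hypothesis in which the orbit may dip into $U' \setminus U$ at so-called pseudo-return times. The key observation is that since $f_0(z) = z^d$ is hyperbolic we must have $c_0 \neq 0$, so for $\varepsilon$ and $\delta'$ small any $w \in U' \setminus U$ satisfies $|f_c(w)| = |w^d + c| \geq |c_0|/2$, whence $f_c(w) \notin U'$. Hence pseudo-returns occur only in isolation: a pseudo-return at time $j$ is always followed by a free step at time $j+1$. I would then decompose the orbit into maximal free blocks alternating with single-step pseudo-returns, apply the Mañé-type estimate on each free block to get a factor $\geq C e^{\gamma_H \ell}$, and estimate each pseudo-return by $|Df_c(f_c^j(z))| = d|f_c^j(z)|^{d-1} \geq d\,e^{-(d-1)\Delta}$. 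A Koebe distortion argument applied to the preimage branch of $f_c$ bringing the post-pseudo-return point near $c$ back into $U'$ forces the free segment between two consecutive pseudo-returns to have length at least of order $\Delta/\gamma_H$, which guarantees that the gain $e^{\gamma_H \ell}$ dominates the loss $e^{-(d-1)\Delta}$ in each cycle; the residual losses combine into one multiplicative constant $C_U$ depending on $U$ only through $\Delta$, giving \eqref{outside_no_hit}. The improvement to \eqref{outside_hit} then comes from the terminal step: $f_c^n(z) \in U$ forces $f_c^{n-1}(z)$ to lie near one of the $d$ preimages $(-c)^{1/d}$ of $0$, each of modulus $\approx |c_0|^{1/d}$, so $|Df_c(f_c^{n-1}(z))| = d|f_c^{n-1}(z)|^{d-1}$ is bounded below by a constant independent of $U$, absorbing the $\Delta$-dependence of the constant.

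The hardest part will be the transfer of the Mañé-type bound from $f_{c_0}$ to all $f_c$ uniformly over $\QQ$ together with the Koebe bookkeeping needed to turn the non-consecutiveness of pseudo-returns into a quantitative lower bound on the free-gap length; once these two ingredients are in hand, the rest is straightforward single-step multiplication.
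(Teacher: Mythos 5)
The paper does not prove this lemma in-house: it observes that it ``follows from'' Lemma \ref{eafcp} and cites Lemma 3.10 of \cite{ABC-1}, where the perturbative statement is established. Your overall skeleton (unperturbed Ma\~n\'e estimate, finite-time transfer to $c\in\QQ$, separate bookkeeping for the annulus $U'\setminus U$) is the right shape, but the step that carries all the difficulty is wrong as stated. You claim that a Koebe argument forces the free gap between two consecutive pseudo-returns to have length of order $\Delta/\gamma_H$, so that the free expansion $e^{\gamma_H\ell}$ beats the one-step loss $d\,e^{-(d-1)\Delta}$. That separation bound is false in general: a point $w\in U'\setminus U$ with $|w|\sim e^{-r}$, $\Delta'\le r\le\Delta$, maps $e^{-dr}$-close to the critical value and then shadows the critical orbit; the orbit re-enters $U'$ as soon as the (shadowed) critical orbit does, and for a general Collet--Eckmann map --- which is exactly the non-slowly-recurrent case this paper is about --- that can happen within a time of order $\Delta'/\alpha$ (with $\alpha$ from Lemma \ref{initial-lemma}), completely independent of $\Delta$. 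So after a pseudo-return at depth $e^{-\Delta}$ the following free block may gain only $e^{O(\gamma_H\Delta')}$, which does not compensate $e^{-(d-1)\Delta}$ when $\Delta\gg\Delta'$. The geometry of the preimage petals of $U'$ near $(-c)^{1/d}$ gives no lower bound on the gap proportional to $\Delta$; the obstruction is dynamical, not geometric. The correct mechanism, which is what \cite{ABC-1} uses and what the surrounding sections of this paper set up, is that the shadowing period itself supplies the recovery: by a bound-period estimate (distortion as in Lemma \ref{bound-dist} together with the CE growth along the critical orbit, transferred to $c\in\QQ$ for the bounded shadowing time $\lesssim d\Delta/\gamma$ by taking $\varepsilon$ small depending on $\Delta$) one gets $|Df_c^{p+1}(w)|\gtrsim e^{(1-\ka)r}$ over $p+1\sim dr$ steps, a per-step rate independent of $\Delta$; and Lemma \ref{initial-lemma} guarantees that a re-entry at depth $e^{-s}$ cannot occur sooner than time $\sim s/\alpha$ after the visit, which is what makes the global accounting close. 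Your proposal never invokes the critical orbit or Lemma \ref{initial-lemma} at this point, and without them the estimate \eqref{outside_no_hit} with $\gamma_H$ independent of $\Delta$ cannot be reached.

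Two further points. First, the explanation you give for \eqref{outside_hit} misses the source of the $U$-dependence: the constant $C_U$ does not degrade because of the final one-step derivative (which is indeed $\approx d|c_0|^{(d-1)/d}$ and harmless), but because the orbit may stop shortly after a deep pseudo-return, before the bound-period recovery of the loss $e^{-(d-1)\Delta}$ is complete; the hypothesis $f_c^n(z)\in U$ restores a $U$-independent constant because landing in $U$ forces (again via shadowing and the recurrence bound) that enough time has elapsed since the last deep pseudo-return for that loss to have been recouped --- a dynamical argument, not a statement about the terminal step. Second, you cannot invoke ``stability of the topological Collet--Eckmann condition under small holomorphic perturbation'': CE/TCE is not an open condition, and circumventing precisely this is the point of the whole paper. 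The finite-time shadowing you mention is the right tool, but to get \eqref{outside_no_hit} for all $n$ uniformly in $c\in\QQ$ it must be combined with a block/telescoping argument (fix $n_1$ with $C e^{\gamma n_1}\ge 2$ for the unperturbed estimate outside $U'$, pass to $c\in\QQ$ by uniform continuity on that finite block, then iterate), as in the proof of Lemma 3.10 in \cite{ABC-1}.
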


\subsection{Transversality} One of the ingredients used in the proof of the theorem is a transversality result at Collet--Eckmann parameters. For the unicritical family, this was proved by A.~Avila~\cite{Avila2002} and G.~Levin~\cite{Levin2002a}. (For general rational Collet--Eckmann maps, this was due to G.~Levin~\cite{Levin-transv}.) This result says that for a Collet--Eckmann unicritical polynomial $f_{c_0}$, one has 
\[
\lim_{n\to\infty}\frac{\xi'_{n}(c_0)}{Df^{n-1}_{c_0}(c_0)}=L \,~(\neq 0,\, \infty).
\]
A direct consequence of this is the following lemma proved in~\cite[Proposition 4.1]{MA7}.

\begin{Lem} \label{levin}
 Let $f_{c_0}$ be a unicritical polynomial satisfying the Collet--Eckmann condition with $\ga_0 = \uli{\ga}(c_0)$. Then for any $q\in (0,1)$ and any $\gamma \in (0,\gamma_0)$ there exists $N_{L} >0$ and $\varepsilon >0$ such that
 \begin{equation} 
 \left\vert \frac{\xi'_{n}(c)}{Df^{n-1}_{c}(c)}-L \right \vert \leq q \vert L \vert
 \end{equation}
 provided that $f_c$ satisfies the Collet--Eckmann condition up to time $n\geq N_{L}$ with exponent $\gamma$ for all $c\in \QQ$. 
 \end{Lem}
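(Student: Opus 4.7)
The plan is to reduce the statement to a telescoping identity for the ratio $S_n(c) := \xi'_n(c)/Df^{n-1}_c(c)$ and then to split the resulting series into a "head" controlled by continuity in the parameter and a "tail" controlled by the Collet--Eckmann hypothesis at both $c_0$ and $c$.

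First I would differentiate the recursion $\xi_{n+1}(c) = \xi_n(c)^d + c$ with respect to $c$, obtaining $\xi'_{n+1}(c) = Df_c(\xi_n(c))\,\xi'_n(c) + 1$ with $\xi'_1(c) = 1$. Dividing by $Df_c^n(c) = Df_c(\xi_n(c))\,Df_c^{n-1}(c)$ gives the one-step identity $S_{n+1}(c) = S_n(c) + 1/Df_c^n(c)$, and hence
\[
S_n(c) \;=\; 1 + \sum_{k=1}^{n-1} \frac{1}{Df_c^k(c)}.
\]
By the Avila--Levin transversality theorem already cited, at the base parameter the limit $L = \lim_{n\to\infty} S_n(c_0) = 1 + \sum_{k\ge 1} 1/Df_{c_0}^k(c_0)$ exists and is finite and nonzero, which also implicitly confirms that the series converges absolutely because $c_0$ is Collet--Eckmann.

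The main estimate is then $|S_n(c) - L| \le |S_n(c) - S_n(c_0)| + |S_n(c_0) - L|$. I would fix a cutoff $M = M(q,\gamma,L)$ large enough that the two tail estimates
\[
\sum_{k \ge M} e^{-\gamma k} \quad\text{and}\quad \sum_{k \ge M} e^{-\underline{\gamma}(c_0) k}
\]
are each smaller than $\tfrac{1}{4}q|L|$. For the first, if $c \in \mathcal{E}_n(\gamma)$ with $n \ge N_L \ge M$, then the hypothesis $|Df_c^k(c)| \ge e^{\gamma k}$ for $\tilde{N} \le k \le n$ controls the portion $M \le k \le n-1$ of the sum for $S_n(c)$. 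For the second, we use $c_0 \in \mathcal{E}_\infty(\underline{\gamma}(c_0) - \varepsilon')$, which controls both the $c_0$-tail in $S_n(c_0)$ and the tail $L - S_M(c_0)$. The term $|S_n(c_0) - L|$ is simply the tail $\sum_{k \ge n-1}$ at $c_0$, which is $\le \tfrac{1}{4}q|L|$ whenever $N_L \ge M$.

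For the head $\sum_{k=1}^{M-1}\bigl(1/Df_c^k(c) - 1/Df_{c_0}^k(c_0)\bigr)$ I would use that the map $c \mapsto Df_c^k(c)$ is a polynomial in $c$ of degree depending only on $k$, hence Lipschitz continuous on $\mathcal{Q}$ with a constant depending on $M$ only. Since $c_0$ is Collet--Eckmann, $|Df_{c_0}^k(c_0)| \ge Ce^{\gamma_0 k}$ is uniformly bounded away from zero for $1 \le k \le M$, so shrinking the side length $\varepsilon$ of $\mathcal{Q}$ sufficiently (depending on $M$) forces $|1/Df_c^k(c) - 1/Df_{c_0}^k(c_0)| < q|L|/(2M)$ for every such $k$ and every $c \in \mathcal{Q}$. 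Summing over $k < M$ yields $|S_n(c) - S_n(c_0)|$ bounded by $\tfrac{1}{2}q|L|$ up to the head, plus the two tail contributions already estimated.

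The delicate part of the argument is the order of quantifiers: $M$ must be chosen first, from $q$, $\gamma$, and $|L|$; then $N_L \ge M$; and only afterwards can $\varepsilon$ be shrunk as a function of $M$ to tame the head. Once these dependencies are threaded correctly, the three pieces combine to $|S_n(c) - L| \le q|L|$ for every $c \in \mathcal{Q} \cap \mathcal{E}_n(\gamma)$ with $n \ge N_L$. The only real subtlety is that the hypothesis gives Collet--Eckmann only up to time $n$, but since the sum defining $S_n(c)$ stops at $k = n-1$, this is exactly what is needed.
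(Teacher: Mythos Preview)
Your argument is correct and is the standard route to this statement. Note that the paper does not actually prove Lemma~\ref{levin}: it is quoted from \cite[Proposition~4.1]{MA7} as a direct consequence of the Avila--Levin transversality limit $L = \lim_n \xi'_n(c_0)/Df_{c_0}^{n-1}(c_0) \neq 0,\infty$. Your telescoping identity
\[
S_n(c) = 1 + \sum_{k=1}^{n-1} \frac{1}{Df_c^k(c)}
\]
together with the head/tail split is exactly how one passes from the existence and nonvanishing of $L$ at $c_0$ to the uniform statement on $\QQ$, and the quantifier ordering you spell out (choose $M$ from $q,\gamma,|L|$; then $N_L \ge M$; then shrink $\varepsilon$ depending on $M$) is the correct one. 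This is essentially the argument one finds in the cited reference.
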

 
It follows from this lemma that, with some a priori specified $\gamma<\gamma_0$, all parameters from $\QQ$ will have exponential growth along the critical orbit up to time $N_L$ with this exponent. Moreover, the function $\xi_n$ has bounded distortion for all $n\leq N_L$.

\subsection{The weak parameter dependence property}

We state here the basic and crucial fact, that, given some lower exponent $\ga_Q \in (0,\uli{\ga})$, and $Q > 1$ (typically so that $\log Q$ is much smaller than both $\uli{\ga}$ and $\ga_Q$), there is $\vep > 0$ (the size of the square $\QQ$) such that, if the derivative $|Df_c^k(c)| \geq e^{\ga_Q k}$ for all $N_L \leq k \leq n$, then
\begin{multline} 
Q^{-(n-k)} |Df_c^{n-k}(\xi_k(c))| |\xi_k(a) - \xi_k(b)| \\ \leq |\xi_n(a) - \xi_n(b)| \\ \leq Q^{n-k} |Df_c^{n-k}(\xi_k(c))| |\xi_k(a) - \xi_k(b)|,\label{weak-dist}
\end{multline}
for all $a,b,c$ belonging to the same partition element $A_n \subset \QQ$. For a detailed proof see Lemma 3.2 in~\cite{ABC-1}.
The point is that this result in connection to the Main Distortion Lemma~\ref{md} gives a strong version of~\eqref{weak-dist} which we present after Lemma~\ref{md}. 

\section{Bound periods, main distortion and start-up}

This section is devoted to proving distortion results. This relies on the expansion during bound periods which in turn depends on estimating the growth during this period (Lemma~\ref{bound-exp}). The expansion during bound periods depends on the length (Lemma~\ref{bound-length}) and distortion estimates (Lemma~\ref{bound-dist}) of bound periods.

\subsection{Distortion, length and expansion during bound periods}
We first prove that we have good distortion during the bound period (see Definition~\ref{bound_period}) and that, given a return at time $n$, this forces the bound period to never succeed $n$ in length. With these in hand it will be clear that one has some definite expansion when the bound periods end.

\paragraph{Distortion during bound periods.}
The following lemma gives distortion estimate in the bound periods.

\begin{Lem}[Distortion during the bound period]\label{bound-dist}
Let $\vep' > 0$ be given. Then if $\de' = e^{-\De'}$ is sufficiently small the following holds. Let $n$ be the index of a return, $A$ a partition element at time $n$, and let $p$ be the associated bound period. Then, for all $c \in A$ and all $z \in \xi_n(A)$,
\[
\left\vert \frac{(f_c^j)^\prime(f_c(z))}{(f_c^j)^\prime(c)} - 1 \right\vert \leq \vep' \qquad (j=1,2,\dots,p).
\]
\end{Lem}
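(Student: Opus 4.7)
Using the chain rule and the identity $Df_c(w) = dw^{d-1}$, write
\[
\frac{(f_c^j)'(f_c(z))}{(f_c^j)'(c)}
= \prod_{i=0}^{j-1} \frac{Df_c(f_c^{i+1}(z))}{Df_c(\xi_{i+1}(c))}
= \prod_{i=0}^{j-1} \left(\frac{f_c^{i+1}(z)}{\xi_{i+1}(c)}\right)^{d-1}.
\]
Showing $\left|\text{ratio} - 1\right| \leq \vep'$ then reduces, via $|\log(1+x)| \leq 2|x|$, to showing that
\[
\Sigma_j := \sum_{i=1}^{j} \eta_i, \qquad \eta_i := \frac{|f_c^i(z) - \xi_i(c)|}{|\xi_i(c)|},
\]
is bounded by a small absolute multiple of $\vep'/(d-1)$. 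So the entire task is to estimate this sum.

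Two complementary bounds on $\eta_i$ are available. Directly from Definition \ref{bound_period} applied with $c_1 = c_2 = c$, one has
\[
\eta_i \leq e^{-\be i} \qquad (1 \leq i \leq p). \tag{a}
\]
Iterating the mean value theorem and invoking an inductive bounded-distortion hypothesis at earlier times yields
\[
\eta_i \leq C\,\frac{|Df_c^{i-1}(c)| \, |z|^d}{|\xi_i(c)|}, \tag{b}
\]
where $|f_c(z) - c| = |z|^d$ is the initial shadowing distance, bounded by $O((\de')^d)$ since $|z| \lesssim \de'$ follows from $\xi_n(A) \cap U' \neq \emptyset$ and the partition condition Definition \ref{partitions}. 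Estimate (a) decays geometrically in $i$ at rate $\be$, while (b) grows geometrically but is multiplied by the tiny factor $|z|^d$. They cross at some index $i^\star$ near $p$; using (b) for $i \leq i^\star$ and (a) for $i > i^\star$ and summing the two geometric tails gives $\Sigma_j \lesssim e^{-\be\, i^\star}$, with the implicit constant independent of $j$.

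To quantify $i^\star$, observe that beyond the end of the bound period (a) fails but (b) still holds, which forces $|Df_c^p(c)||z|^d \gtrsim e^{-\be(p+1)} |\xi_{p+1}(c)|$. Combined with the trivial upper bound $|Df_c^p(c)| \leq e^{\oli{\ga} p}$, the Collet--Eckmann lower bound $|\xi_{p+1}(c)| \geq K e^{-\al(p+1)}$ from Lemma \ref{initial-lemma}, and the input $|z|^d \leq C_0 e^{-d\De'}$, one obtains $p \gtrsim d\De'/(\oli{\ga} + \al + \be)$. Hence $\Sigma_j \leq C' e^{-c\De'}$ for some $c > 0$; choosing $\De'$ sufficiently large, depending on $\vep'$ and the dynamical constants, drives $\Sigma_j$ below any prescribed threshold and yields the lemma. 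The apparent circularity of bound (b) is resolved by induction on $j$: at stage $j$ we assume the distortion is at most $1+\vep'$ for all smaller iterates, which enters (b) only as a bounded multiplicative constant absorbed into $C$. The base case $j = 1$ is immediate since $|f_c(z)/c - 1| \leq |z|^d/|c_0|$ is small for $\de'$ small. The main delicate point is the balancing of (a) against (b) at the crossover and the verification that the resulting estimate is genuinely independent of the (potentially large) constant $\oli{\ga}$ through choosing $\De'$ large enough.
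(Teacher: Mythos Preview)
Your proposal is correct and follows essentially the same route as the paper: reduce to bounding $\Sigma_j = \sum_i |f_c^i(z)-\xi_i(c)|/|\xi_i(c)|$, split the sum into an early range handled by a derivative estimate and a late range handled directly by the binding condition, and conclude by choosing $\De'$ large.

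Two simplifications are worth noting. First, the induction you set up to justify bound (b) is not needed: the paper simply uses the trivial global bound $|Df_c^{i-1}(w)| \leq e^{\oli{\ga}(i-1)}$, valid for any $w$ in a fixed bounded region, which gives (b) with $e^{\oli{\ga}(i-1)}$ in place of $|Df_c^{i-1}(c)|$ and sidesteps the apparent circularity outright. Second, the detour through a lower bound on $p$ to quantify $i^\star$ is unnecessary (and the claim that $i^\star$ is ``near $p$'' is imprecise, since they can differ by a nontrivial factor when $\ga_p \ll \oli{\ga}$): one reads off $i^\star \sim dr/(\oli{\ga}+\al+\be)$ directly from the crossover condition, and since the return is to $U'$ one has $r \geq \De'$, giving $e^{-\be i^\star} \leq \exp(-\be d\De'/(\oli{\ga}+\al+\be))$ immediately. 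The paper in fact splits at the smaller index $J = \lceil r/(\al+\oli{\ga})\rceil$, obtaining the first partial sum $\lesssim e^{-(d-1)\De'}$ and the second $\lesssim \exp(-\be\De'/(\al+\oli{\ga}))$; your crossover split yields a slightly sharper combined bound, but either choice suffices.
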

\begin{proof}
By the chain rule and a standard estimate we have
\[
 \left\vert \frac{(f_c^j)^\prime(f_c(z))}{(f_c^j)'(c)} - 1 \right\vert \leq \exp\left\{\sum_{\nu = 0}^{j-1} \frac{\vert f_c^{\nu}(f_c(z))^{d-1} - f_c^{\nu}(c)^{d-1}\vert}{\vert f_c^{\nu}(c) \vert^{d-1}} \right\} - 1.
\]
Using the identity
\[
z^{d-1} - w^{d-1} = (z-w)\sum_{k = 0}^{d-2} z^{k}w^{d-2-k},
\]
we find that
\[
\vert f_c^{\nu}(f_c(z))^{d-1} - f_c^{\nu}(c)^{d-1} \vert \leq \vert f_c^{\nu}(f_c(z)) - f_c^{\nu}(c) \vert \sum_{k=0}^{d-2}\vert f_c^{\nu}(f_c(z)) \vert^k\vert f_c^{\nu}(c)\vert^{d-2-k}.
\]
During the bound period the following holds,
\[
\frac{\vert f_c^{\nu}(f_c(z))\vert}{\vert f_c^{\nu}(c)\vert} \leq \frac{\vert f_c^{\nu}(f_c(z)) - f_c^{\nu}(c) \vert}{\vert f_c^{\nu}(c) \vert} + 1 \leq e^{-\beta j} + 1 < 2,
\]
and hence,
\[
\vert f_c^{\nu}(f_c(z))^{d-1} - f_c^{\nu}(c)^{d-1} \vert \leq \vert f_c^{\nu}(f_c(z)) - f_c^{\nu}(c) \vert \vert f_c^{\nu}(c) \vert^{d-2} (2^{d-1}-1).
\]
We conclude that
\[
\left\vert \frac{(f_c^j)^\prime(f_c(z))}{(f_c^j)'(c)} - 1 \right\vert \leq \exp\left\{ (2^{d-1}-1) \sum_{\nu=0}^{j-1} \frac{\vert f_c^{\nu}(f_c(z)) - f_c^{\nu}(c)\vert}{\vert f_c^{\nu}(c)\vert}\right\} - 1.
\]
Suppose that $\dist(0, \xi_n(A)) \sim e^{-r}$. Then for all $z \in \xi_n(A)$ we have $\vert f_c(z)-c \vert \sim e^{-dr}$. We divide the sum into two parts $[0,j-1] = [0,J]\cup[J+1,j-1]$, where $J=\lceil r/(\alpha + \oli{\ga}) \rceil$ and where $\al$ is the upper bound of $\al_j = \al_j(c)$ for $c \in A$, satisfying $\al < \oli{\ga} - \uli{\ga}$. Then the first sum becomes
\begin{align*}
\sum_{\nu = 0}^{J} &\frac{\vert f_c^{\nu}(f_c(z)) - f_c^{\nu}(c)\vert}{\vert f_c^{\nu}(c) \vert} \leq C \sum_{\nu = 1}^J \frac{\vert Df_c^{\nu}(c)| |f_c(z) - c |}{e^{-\al \nu} } \\ & \leq C \sum_{\nu=0}^J e^{\oli{\ga} \nu}|f_c(z)-c| e^{\al \nu} \leq C \sum_{\nu=1}^J e^{(\oli{\ga} + \al) \nu - dr} \leq C' e^{r-dr} = C' e^{-(d-1)\Delta'},
\end{align*}
which can be made arbitrarily small if $\De'$ is large enough. For the estimate of the second sum we use the binding condition directly:
\[
\sum_{\nu = J+1}^{j-1} \frac{\vert f_c^\nu(f_c(z)) - f_c^\nu(c) \vert}{\vert f_c^\nu(c) \vert} \leq \sum_{\nu = J+1}^{j-1} e^{-\beta \nu} \leq C e^{-\beta (J+1)} \leq C\exp\left\{-\frac{\beta}{\alpha + \oli{\ga}}\Delta' \right\}.
\]
Adding these two estimates, making $\delta'$ sufficiently small, completes the proof.
\end{proof}

\paragraph{Length of bound periods.}
The bounded distortion above ensures that the bound period will never exceed its return time, as shown in the following result. Recall that $N_L$ was defined in Section 2.3 and $\EE_n(\gamma)$ was defined in Definition~\ref{EEga}.

\begin{Lem}[Length of the bound period] \label{bound-length}
Let $n\geq N_L$ and $\xi_n(c)$ be a return where $c\in \EE_n(\ga)$ for some $\ga > 0$. Suppose that $p$ is the associated bound period for this return. Then $p < n$.
\end{Lem}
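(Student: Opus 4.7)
The plan is to argue by contradiction: assume $p \geq n$, apply the bound-period condition (Definition \ref{pwbound}) at the particular index $j = n$,
\[
|\xi_{2n}(c) - \xi_n(c)| \leq e^{-\beta n}|\xi_n(c)|,
\]
and estimate the left-hand side from below in terms of $|Df_c^n(c)|$. Combined with the Collet--Eckmann lower bound $|Df_c^n(c)| \geq e^{\gamma n}$ (which follows from $c \in \EE_n(\gamma)$ once $n \geq \ti N$), this will force $n$ to be bounded by a universal constant, contradicting $n \geq N_L$ once $N_L$ is chosen sufficiently large.

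For the lower bound, set $c_k = \xi_{k+1}(c)$ and $w_k = \xi_{n+k+1}(c)$, so that $w_0 - c_0 = \xi_{n+1}(c) - c = \xi_n(c)^d$. The factorisation $a^d - b^d = (a-b)\sum_{j=0}^{d-1} a^{d-1-j}b^j$ applied iteratively to $w_{k+1} - c_{k+1} = w_k^d - c_k^d$ yields, by telescoping,
\[
w_k - c_k = \xi_n(c)^d \prod_{\ell=0}^{k-1}\left(\sum_{j=0}^{d-1} w_\ell^{d-1-j} c_\ell^j\right).
\]
During the bound period $|w_\ell/c_\ell - 1| \leq e^{-\beta(\ell+1)}$, so each inner sum differs from $f_c'(c_\ell) = d\,c_\ell^{d-1}$ by a multiplicative factor $1 + O(e^{-\beta(\ell+1)})$. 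Since $\sum_\ell e^{-\beta(\ell+1)} < \infty$, the cumulative distortion is uniformly bounded (this is essentially the content of Lemma \ref{bound-dist}), and taking $k = n-1$ we obtain
\[
|\xi_{2n}(c) - \xi_n(c)| \sim |\xi_n(c)|^d\,|Df_c^{n-1}(c)|.
\]

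Plugging this estimate into the bound-period inequality and invoking \eqref{initial} to replace $|\xi_n(c)|^{d-1}|Df_c^{n-1}(c)|$ by $|Df_c^n(c)|/d$, one obtains $|Df_c^n(c)| \leq C\,d\,e^{-\beta n}$ for some non-dynamical constant $C$. Combining with $|Df_c^n(c)| \geq e^{\gamma n}$ yields $e^{(\gamma+\beta)n} \leq Cd$, which bounds $n$ by $\log(Cd)/(\gamma+\beta)$; choosing $N_L$ above this threshold contradicts $n \geq N_L$, so $p < n$. The main technical obstacle is that in $\C$ the mean value theorem is only one-sided, so Lemma \ref{bound-dist} does not by itself deliver a two-sided estimate for $|\xi_{2n}(c)-\xi_n(c)|$; the telescoping identity above is the natural substitute, and is precisely the point at which the Collet--Eckmann exponent enters through the chain rule.
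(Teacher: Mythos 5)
Your proposal is correct and follows essentially the same route as the paper: assume $p\geq n$, apply the binding condition at $j=n$, bound $|\xi_{2n}(c)-\xi_n(c)|$ from below by $\mathrm{const}\cdot|Df_c^{n-1}(c)|\,|\xi_n(c)|^d$ using distortion during the bound period, and combine with \eqref{initial} and the Collet--Eckmann growth $|Df_c^n(c)|\geq e^{\gamma n}$ to force $n$ below a fixed threshold, contradicting $n\geq N_L$. Your telescoping identity is just an inline re-derivation of Lemma \ref{bound-dist} (and, as you note, the crude factor $1+O(e^{-\beta(\ell+1)})$ alone would not control the early factors for $d\geq 3$ — one really needs the two-regime estimate from that lemma's proof), so citing Lemma \ref{bound-dist} as the paper does is the cleaner way to close that step.
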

\begin{proof}
Assume, on the contrary, that $p \geq n$. This means that $2n$ belongs to the bound period of $\xi_n(c)$. By Definition~\ref{pwbound}, we have
\[
\vert \xi_{2n}(c) - \xi_n(c) \vert \leq e^{-\beta n}\vert \xi_n(c) \vert.
\]
With $z = \xi_n(c)$ in Lemma~\ref{bound-dist}, we have the following lower bound estimate for the left hand side of the above inequality:
\begin{align*}
\vert \xi_{2n}(c) - \xi_n(c) \vert &= \vert f_c^n(\xi_n(c)) - f_c^n(0) \vert \\
&=\vert f_{c}^{n-1}(\xi_{n+1}(c)) - f_{c}^{n-1}(c)\vert \\
&\geq C' \vert (f_c^{n-1})'(c) \vert \vert \xi_{n+1}(c) -c \vert \\
&=C' \vert (f_c^{n-1})'(c) \vert \vert \xi_n(c) \vert^d,
\end{align*}
where $C'>0$ is some constant which is arbitrarily close to $1$ if $\varepsilon$ is sufficiently small. So we have that
\[
C'e^{\gamma_{n-1}(n-1)-\alpha_n n d}\leq e^{-\beta n} e^{-\alpha_n n},
\]
which means that
\[
C'e^{\gamma_{n-1}(n-1)-\alpha_n n (d-1)}\leq e^{-\beta n}.
\]
By~\eqref{alpha_gamma_relation}, one can see that
\[
\frac{1}{d}C' e^{\gamma_n n} \leq e^{-\beta n},
\]
which is impossible since $\gamma_n \geq \gamma > 0$ and $n\geq N_L$ and $N_L$ can be taken sufficiently large by making $\varepsilon$ small. This completes the proof.
\end{proof}
We will assume that $\ti{N} \leq N_L$ (in fact the difference should be very large). 

\paragraph{Expansion during bound periods.}
We have the following lemma and its corollary ensuring the derivative growth of the critical orbit during bound periods.
 
\begin{Lem} \label{bound-exp}
Let $\xi_{n}(c) \in U'$ and $|\xi_{n}(c)| \sim e^{-r}$ and let $p$ be the associated bound period. Then for any fixed $\eta > 0$ the following hold if $\delta' > 0$ is sufficiently small:
\begin{equation}\label{bdforp}
 \frac{(1-\eta)dr}{\ga_p + \al_{p+1} + \be} \leq p \leq \frac{(1+\eta)dr}{\ga_{p-1} + \al_p + \be},
\end{equation}
and
\begin{equation}\label{expbp}
\left|Df_{c}^{p+1}(\xi_{n}(c))\right| \geq C e^{(1-\eta) \frac{\ga_p - (d-1)(\al_{p+1}+ \be)}{\ga_p + \al_{p+1} + \be} r}.
\end{equation}
\end{Lem}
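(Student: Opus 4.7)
The plan is to unpack the definition of the bound period using Lemma \ref{bound-dist}, in the spirit of the opening calculation in the proof of Lemma \ref{bound-length}. Writing
\[
\xi_{n+j}(c) - \xi_j(c) = f_c^{j-1}(\xi_{n+1}(c)) - f_c^{j-1}(c)
\]
and observing that $\xi_{n+1}(c) - c = \xi_n(c)^d$, Lemma \ref{bound-dist} applied at exponent $j-1$ (which is $\leq p$ for every $j \leq p+1$) gives
\[
|\xi_{n+j}(c) - \xi_j(c)| = (1 + O(\varepsilon')) |Df_c^{j-1}(c)| \cdot |\xi_n(c)|^d = (1 + O(\varepsilon')) e^{\gamma_{j-1}(j-1) - d r}.
\]
Combining with $|\xi_j(c)| = e^{-\alpha_j j}$, the binding inequality $|\xi_{n+j}(c) - \xi_j(c)| \leq e^{-\beta j}|\xi_j(c)|$ becomes, after taking logarithms,
\[
\gamma_{j-1}(j-1) + (\alpha_j + \beta) j \leq d r + O(1).
\]

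To extract \eqref{bdforp}, I apply the displayed inequality at $j = p$ (where it holds) and at $j = p+1$ (where the reverse holds because $p$ is the maximal index). The former yields $p(\gamma_{p-1} + \alpha_p + \beta) \leq d r + O(1)$ and the latter $(p+1)(\gamma_p + \alpha_{p+1} + \beta) \geq d r - O(1)$. Since $\delta'$ small makes $r \geq \Delta'$ large, and the first inequality itself forces $p$ to be comparably large, the additive $O(1)$ corrections can be absorbed into the multiplicative factors $(1 \pm \eta)$ on the right-hand sides, giving both estimates in \eqref{bdforp}.

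For \eqref{expbp}, the chain rule gives
\[
|Df_c^{p+1}(\xi_n(c))| = d |\xi_n(c)|^{d-1} \cdot |Df_c^p(\xi_{n+1}(c))|,
\]
and another application of Lemma \ref{bound-dist} at $j = p$ (with $z = \xi_n(c)$) replaces the last factor by $(1 - \varepsilon') e^{\gamma_p p}$. Using $|\xi_n(c)|^{d-1} \sim e^{-(d-1)r}$ together with the lower bound on $p$ from \eqref{bdforp} then yields
\[
|Df_c^{p+1}(\xi_n(c))| \geq C \exp\left\{ \frac{(1-\eta) d \gamma_p r}{\gamma_p + \alpha_{p+1} + \beta} - (d-1) r \right\}.
\]
A direct algebraic simplification rewrites the exponent as $(1-\eta) r \cdot \frac{\gamma_p - (d-1)(\alpha_{p+1} + \beta)}{\gamma_p + \alpha_{p+1} + \beta}$ minus an error of order $\eta \gamma_p r / (\gamma_p + \alpha_{p+1} + \beta)$; the error is absorbed into a relabeled $(1 - \eta)$ factor, which is legitimate because $\gamma_p \leq \overline{\gamma}$ is bounded and, thanks to Lemma \ref{initial-lemma} and the choice of small $\beta$, the quantity $\gamma_p - (d-1)(\alpha_{p+1} + \beta)$ is bounded below by a fixed positive constant.

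The main obstacle is purely bookkeeping: keeping the multiplicative distortion errors from Lemma \ref{bound-dist} and the additive $O(1)$ errors arising from the logarithmic form of the binding condition uniform in $c$ and $n$, and showing they may all be swept into a single parameter $\eta$ by taking $\delta'$ small enough. The key feedback is that small $\delta'$ forces large $r$, which via \eqref{bdforp} forces large $p$, providing precisely the asymptotic headroom needed for every absorption above.
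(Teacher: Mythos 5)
Your argument is essentially the paper's own proof: you unpack the binding condition via Lemma \ref{bound-dist} at the times $j=p$ (where it holds) and $j=p+1$ (where it fails) to get both sides of \eqref{bdforp}, and then combine the chain rule $|Df_c^{p+1}(\xi_n(c))| = d|\xi_n(c)|^{d-1}|Df_c^p(\xi_{n+1}(c))|$ with the lower bound on $p$ to get \eqref{expbp}, exactly as in the paper up to a trivial reorganization of the final algebra. The only quibble is your last justification: the uniform positive lower bound on $\gamma_p-(d-1)(\alpha_{p+1}+\beta)$ (needed both for your $\eta$-relabelling and, tacitly, for the paper's substitution of the lower bound for $p$ into the exponent) does not come from Lemma \ref{initial-lemma}, but from \eqref{alpha_gamma_2} under an $\EE_n(\gamma)$-type hypothesis with $\beta<\gamma/(4d)$, as the paper records in \eqref{quarter-ga} right after the lemma.
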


\begin{proof}
Since $p$ is the bound period for the return $\xi_n(c)$, we have, by Definition~\ref{pwbound}, that
\begin{equation}\label{ufexp}
\left|f_{c}^{p+1}(\xi_n(c))-\xi_{p+1}(c)\right|\geq e^{-\beta(p+1)}|\xi_{p+1}(c)|.
\end{equation}
Therefore, by Lemma~\ref{bound-dist} there exists some constant $C'>0$ such that
\[
|Df_{c}^{p}(c)||\xi_{n+1}(c)-c|\geq C'e^{-\beta(p+1)}|\xi_{p+1}(c)|,
\]
which gives, since $|Df_{c}^{p}(c)|=e^{\gamma_p p}$ and $|\xi_{p+1}(c)|=e^{-\alpha_{p+1}(p+1)}$,
\begin{equation}\label{drest}
e^{\gamma_p p} e^{-dr}\geq C'' e^{-(\alpha_{p+1}+\beta)(p+1)},
\end{equation}
where $C''=C'e^{-1/2}$. By taking $\delta>0$ sufficiently small, so that $r$ is large enough, (and thus $p$ sufficiently large) we have the first inequality of~\eqref{bdforp}. To see that the second inequality in~\eqref{bdforp} also holds, we use Definition~\ref{pwbound} and Lemma~\ref{bound-dist} again at the time $p$. This gives similar estimate as above but with reversed inequalities. We leave details for interested readers.

To obtain~\eqref{expbp}, first we note that, by~\eqref{drest},
\[
e^{- r} \geq C''^{1/d} e^{-\frac{1}{d}(\al_{p+1} + \be)(p+1)} e^{-\frac{1}{d}\gamma_p p},
\]
which implies
\begin{equation}\label{drest1}
e^{-(d-1)r} \geq C''^{(d-1)/d} e^{ - \frac{d-1}{d}(\al_{p+1} + \be) (p+1) -\frac{d-1}{d}\gamma_p p}.
\end{equation}
Moreover,
\begin{align}
 \left|Df_{c}^{p+1}(\xi_{n}(c))\right| &= |Df_{c}^{p}(\xi_{n+1}(c))||Df_{c}(\xi_n(c))| \nonumber \\
                    &\geq C'' |Df_{c}^{p}(c)|e^{-(d-1)r} \nonumber \\
                    &\geq C''^{\frac{2d-1}{d}} e^{\ga_p p - \frac{d-1}{d}(\al_{p+1} + \be) (p+1) -\frac{d-1}{d}\gamma_p p} \nonumber \\
                    &\geq C''^2 e^{-2\overline{\gamma}} e^{\frac{1}{d}(\ga_p  - (d-1)(\al_{p+1} + \be))p} . \label{erest} 
\end{align}
Combining~\eqref{erest} with the first inequality of~\eqref{bdforp}, we get
\[
|Df_c^{p+1}(\xi_n(c))| \geq C e^{(1-\eta) \frac{ \ga_p - (d-1)(\al_{p+1} + \be)}{ \ga_p + \al_{p+1} + \be } r},
\]
where $C>0$ is a (non-dynamical) constant.
\end{proof}

\comm{
\begin{Rem}
Note that, since $c \in \EE_{n+1}(\ga)$, if we impose a condition on $\be$, like $\be \leq \ga/(4d)$ we have, by Remark~\ref{alpha_gamma_relation2},
\begin{equation} 
\ga_p- (d-1)(\alpha_{p+1} + \be) \geq \ga - \be(d-1) - \frac{C}{p} \geq \frac{3\ga}{4} + \frac{\ga}{4d} - \frac{C}{p} > \frac{3\ga}{4}, 
\end{equation}
 if $p$ is large enough (i.e. if $\de' = e^{-\De'}$ is small enough). Again, provided $p$ is large enough, we can get rid of the constants in front of (\ref{erest}), and get 
 \[
 \left|Df_{c}^{p+1}(\xi_{n}(c))\right| \geq e^{\frac{3 \gamma}{4d} p}.
  \]
\end{Rem}
}

\begin{Rem} \label{p-not-return}
By the definition of the bound period, one can see that $p$ cannot be a return time. This means, for example, that $\al_p \approx 0$. 
\end{Rem}

Notice that~\eqref{alpha_gamma_2} gives us
\[
\gamma_{k} \geq \gamma_{k+1} + (d-1)\alpha_{k+1} - \frac{C}{k}.
\]
If we assume that $c \in \EE_n(\gamma)$ then, since $p < n$ by Lemma~\ref{bound-length},
\[
\gamma_p \geq \gamma + (d-1)\alpha_{p+1} - \frac{C}{p+1}.
\]
Therefore, assuming $\beta < \gamma/(4d)$,
\begin{equation} \label{quarter-ga}
\gamma_p - (d-1)(\alpha_{p+1} + \be) \geq \gamma - \beta(d-1) - \frac{C}{p+1} \geq \frac{3\gamma}{4},
\end{equation}
provided $p$ is large enough (which is the case if $\delta'$ is small enough). Now we assume that $\ga \geq \uli{\gamma}$, and $\al_{p+1} < \oli{\gamma} - \uli{\gamma} \leq \oli{\gamma}$ (see Remark~\ref{gamma-ulioli}). 
Choosing $\eta$ sufficiently small in Lemma~\ref{bound-exp} we get
\[
(1-\eta)\frac{\gamma_p - (d-1)(\alpha_{p+1} + \beta)}{\gamma_p + \alpha_{p+1} + \beta} \geq (1-\eta)\frac{3 \gamma}{\left(8+\frac{1}{d}\right)\overline{\gamma}} \geq \frac{\underline{\gamma}}{3 \overline{\gamma}}.
\]
We can get rid of the constant in front of (\ref{expbp}), such that 
 \[
|Df_{c}^{p+1}(\xi_n(c)) | \geq e^{\frac{\uli{\ga}}{4 \oli{\ga}} r},
\]
given that $\de'$ is sufficiently small.

Now, put
\[
\ka = 1 - \frac{\uli{\ga}}{4 \oli{\ga}}.
\]
We arrive at the following corollary.

\begin{Cor} \label{bound-kappa}
 Suppose that $\xi_n(c) \in U'$, $|\xi_{n}(c)| \sim e^{-r}$ and $c \in \EE_{n}(\ga)$ for some $\ga \geq \uli{\ga}$. Moreover, suppose that $\be < \ga /(4d)$. Then
 \[
|Df_c^{p+1}(\xi_n(c))| \geq e^{(1-\ka)r}. 
  \]
 \end{Cor}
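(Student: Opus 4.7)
The plan is to apply Lemma~\ref{bound-exp} directly and then simplify the exponent in \eqref{expbp} to the claimed form. Starting from
\[
|Df_c^{p+1}(\xi_n(c))| \geq C \exp\!\Bigl\{(1-\eta)\,\tfrac{\ga_p - (d-1)(\al_{p+1}+\be)}{\ga_p + \al_{p+1} + \be}\,r\Bigr\},
\]
which holds for any preassigned $\eta>0$ once $\de'$ is small enough, the task reduces to showing that the fraction in the exponent exceeds $\uli{\ga}/(4\oli{\ga}) = 1-\ka$.

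For the numerator I would use Lemma~\ref{bound-length}, which gives $p+1 \leq n$, so that $\ga_{p+1} \geq \ga$ by the hypothesis $c \in \EE_n(\ga)$. Substituting this into \eqref{alpha_gamma_2} (applied with $n$ there set to $p+1$) yields $\ga_p \geq \ga + (d-1)\al_{p+1} - C/(p+1)$, and after subtracting $(d-1)(\al_{p+1}+\be)$ and using the standing assumption $\be < \ga/(4d)$, the numerator is at least $3\ga/4 \geq 3\uli{\ga}/4$, once $p$ is large enough that $C/(p+1)$ is absorbed into $\ga/4$.

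For the denominator I would use the trivial uniform bound $\ga_p \leq \oli{\ga}$, the bound $\al_{p+1} < \oli{\ga}-\uli{\ga}$ recorded in Remark~\ref{gamma-ulioli}, and $\be \leq \oli{\ga}/(4d)$; these combine to give $\ga_p + \al_{p+1} + \be \leq (2+1/(4d))\oli{\ga}$. The resulting ratio is therefore at least $3\ga/((8+1/d)\oli{\ga}) \geq \uli{\ga}/(3\oli{\ga})$, and a sufficiently small choice of $\eta$ turns $(1-\eta)\cdot\uli{\ga}/(3\oli{\ga})$ into $\uli{\ga}/(4\oli{\ga})$. Finally, by choosing $\de'$ small enough that $r$ is large, the multiplicative constant $C$ from Lemma~\ref{bound-exp} is absorbed into the exponential, yielding $|Df_c^{p+1}(\xi_n(c))| \geq e^{(1-\ka)r}$.

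The corollary is essentially a bookkeeping consequence of Lemma~\ref{bound-exp}, so the main (minor) obstacle is verifying that a single choice of $\de'$ small simultaneously (i) validates Lemma~\ref{bound-exp}, (ii) makes $C/(p+1)$ in the numerator estimate negligible compared to $\ga/4$, and (iii) lets the multiplicative constant be absorbed into the exponential --- all three are driven by the largeness of $r$ and $p$, so a single sufficiently large $\De'$ does the job.
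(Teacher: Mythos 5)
Your proposal is correct and follows essentially the same route as the paper: the paper derives the corollary from Lemma \ref{bound-exp} by using $p<n$ (Lemma \ref{bound-length}) together with \eqref{alpha_gamma_2} to get $\ga_p \geq \ga + (d-1)\al_{p+1} - C/(p+1)$, bounding the numerator below by $3\ga/4$ via $\be<\ga/(4d)$, bounding the denominator by $\ga_p\leq\oli{\ga}$ and $\al_{p+1}<\oli{\ga}-\uli{\ga}$, and then absorbing $\eta$ and the constant $C$ using $\ga\geq\uli{\ga}$ and the largeness of $r$ for $\de'$ small. Your bookkeeping of the constants matches the paper's exactly.
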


We finish this section with the following two useful lemmas.

\begin{Lem} \label{bound-order}
 Suppose that $\xi_{n_1}(c)$ and $\xi_{n_2}(c)$ are two returns, $c \in \QQ$, and that $|\xi_{n_1}(c)| \sim e^{-r_1}$ and
 $|\xi_{n_2}(c)| \sim e^{-r_2}$, where $r_1 \leq r_2 - 1$. Then the corresponding bound periods $p_1$ and $p_2$ satisfy $p_1 \leq p_2$. 
\end{Lem}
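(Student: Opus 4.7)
The plan is to show that the binding condition $|f_c^j(\xi_n(c))-\xi_j(c)|\leq e^{-\beta j}|\xi_j(c)|$ defining a bound period is monotone in $|\xi_n(c)|$: a deeper return satisfies it for more indices $j$ and hence produces a longer bound period. The hypothesis $r_1\leq r_2-1$ delivers a definite comparison $|\xi_{n_1}(c)|>|\xi_{n_2}(c)|$, from which $p_1\leq p_2$ will follow.

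First, I would rewrite
\[
f_c^j(\xi_{n_i}(c))-\xi_j(c)=f_c^{j-1}\bigl(\xi_{n_i}(c)^d+c\bigr)-f_c^{j-1}(c)
\]
and invoke Lemma \ref{bound-dist} to obtain, for every $j$ in the bound period of $\xi_{n_i}(c)$,
\[
|f_c^j(\xi_{n_i}(c))-\xi_j(c)| = (1+O(\vep'))\,|Df_c^{j-1}(c)|\,|\xi_{n_i}(c)|^d,
\]
with $\vep'>0$ as small as desired (by taking $\de'$ small enough). This exposes the dependence of the binding inequality on the index $i$ purely through the factor $|\xi_{n_i}(c)|^d$.

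Next, suppose for contradiction that $p_1>p_2$ and consider $j=p_2+1$. Since $p_2+1\leq p_1$, the binding condition holds for $\xi_{n_1}$ at this index, while by definition it fails for $\xi_{n_2}$. Applying the above bracketing to $\xi_{n_1}$ at $j=p_2+1$, and likewise to $\xi_{n_2}$ at $j=p_2+1$ (permissible by inspecting the proof of Lemma \ref{bound-dist}, whose closeness input $|\xi_{n_2+j}(c)/\xi_j(c)|<2$ still holds one step past the bound period, as the binding has only marginally failed), we get
\[
(1-\vep')\,|Df_c^{p_2}(c)|\,|\xi_{n_1}(c)|^d \leq e^{-\beta(p_2+1)}|\xi_{p_2+1}(c)| < (1+\vep')\,|Df_c^{p_2}(c)|\,|\xi_{n_2}(c)|^d,
\]
so that, cancelling the common factor,
\[
\frac{|\xi_{n_1}(c)|^d}{|\xi_{n_2}(c)|^d} < \frac{1+\vep'}{1-\vep'}.
\]
The hypothesis $r_1\leq r_2-1$, together with the convention $|\xi_{n_i}(c)|\sim e^{-r_i}$, yields a definite lower bound on $|\xi_{n_1}(c)|/|\xi_{n_2}(c)|$ which, raised to the $d$-th power, exceeds $(1+\vep')/(1-\vep')$ once $\vep'$ is taken small enough. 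This contradiction forces $p_1\leq p_2$.

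The only delicate point is the one-step extension of the distortion estimate past the bound period of $\xi_{n_2}$, but since the binding for $\xi_{n_2}$ has only just failed at $j=p_2+1$, the ratio used in the proof of Lemma \ref{bound-dist} remains uniformly bounded and the argument goes through with essentially unchanged constants. Beyond that the proof is a clean quantitative comparison driven by the monotonicity of $|\xi_n(c)|^d$.
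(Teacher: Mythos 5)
Your argument is essentially the paper's own proof: both use Lemma \ref{bound-dist} to write the deviation $|\xi_{n_i+j}(c)-\xi_j(c)|\approx |Df_c^{j-1}(c)|\,|\xi_{n_i}(c)|^{d}$ and exploit $r_1\le r_2-1$ to conclude that the shallower return violates the binding condition no later than the deeper one; the paper simply states the resulting domination for $j\le\min\{p_1,p_2\}$ rather than running the contradiction at $j=p_2+1$. Also, your ``delicate point'' requires no extension of Lemma \ref{bound-dist} past the bound period: the deviation at step $p_2+1$ equals $|f_c^{p_2}(f_c(\xi_{n_2}(c)))-f_c^{p_2}(c)|$, which only invokes distortion up to $j=p_2$, exactly as the paper itself does in deriving \eqref{drest} in Lemma \ref{bound-exp}.
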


\begin{proof}
 By bounded distortion, Lemma~\ref{bound-dist} (note that we can choose $\vep'$ as small as we like, therefore $C$ below is very close to $1$), we have, for $j \leq \min\{p_1, p_2\}$, 
 \[
  |\xi_{n_1+j}(c) - \xi_{j}(c)| \geq C e^{-2r_1} |Df_c^j(c)|\geq C e^{2} e^{-2r_2} |Df_c^j(c)| \geq |\xi_{n_2+j}(c) - \xi_{j}(c)|.
\]
So the bound period for the second return will be longer than (or at least as long as) the first one. 
 \end{proof}

 \begin{Lem} \label{adjacent-returns}
  Suppose that $\xi_{n_1}(A)$ and $\xi_{n_2}(A)$ are two consecutive returns, $n_2 > n_1$, where $A$ is a partition element at time $n_2$ and that $A \subset \EE_{n_1}(\ga)$ for some $\ga > 0$, and $\be < \ga /(4d)$. Then $|Df_c^{n_2-n_1}(\xi_{n_1}(c))| \geq e^{\gamma_2 (n_2 - n_1)},$ for some $\ga_2 \geq \ga /(3d)$ and for $c\in A$. 
  In particular, 
  \[
\diam(\xi_{n_2}(A)) \geq 2 \diam(\xi_{n_1}(A)).
\]
Moreover, $|Df_c^{n_2}(c)| \geq e^{(\gamma/3) n_2}$ if $\gamma \leq \gamma_H$. 
  \end{Lem}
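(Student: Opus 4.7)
The plan is to split $[n_1, n_2]$ at the end of the bound period for the return at $n_1$. Let $p$ denote this bound-period length; by Lemma~\ref{bound-length} we have $p < n_1$, and set $L = n_2 - n_1 - p - 1 \geq 0$. On $[n_1, n_1+p+1]$ the orbit shadows the critical orbit so Lemma~\ref{bound-exp} applies, while on $[n_1+p+1, n_2]$ the orbit stays outside $U$ (any intermediate pseudo-returns lie in $U'\sm U$) so Lemma~\ref{oel} applies. Combining the derivative bound \eqref{expbp} with the lower bound $r \geq p(\gamma_{p-1}+\alpha_p+\beta)/((1+\eta)d)$ from \eqref{bdforp} and the key estimate \eqref{quarter-ga}, and absorbing the multiplicative constants into the exponent via $p+1 \geq \ti N$, one obtains
\[
|Df_c^{p+1}(\xi_{n_1}(c))| \geq \exp\!\left(\tfrac{3\gamma}{4d}(p+1)\right),
\]
while Lemma~\ref{oel} gives $|Df_c^{L}(\xi_{n_1+p+1}(c))| \geq C_U e^{\gamma_H L}$ on the free segment.

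Multiplying these bounds and noting that both exponents $3\gamma/(4d)$ and $\gamma_H$ exceed $\gamma/(3d)$ (the latter whenever $\gamma$ is not unreasonably large compared to the fixed constant $\gamma_H$), one concludes $|Df_c^{n_2-n_1}(\xi_{n_1}(c))| \geq e^{\gamma_2(n_2-n_1)}$ with $\gamma_2 \geq \gamma/(3d)$. The diameter estimate then follows at once from the weak parameter dependence \eqref{weak-dist}:
\[
\diam(\xi_{n_2}(A)) \geq Q^{-(n_2-n_1)}\,|Df_c^{n_2-n_1}(\xi_{n_1}(c))|\,\diam(\xi_{n_1}(A)) \geq (e^{\gamma_2}/Q)^{n_2-n_1}\diam(\xi_{n_1}(A)),
\]
which exceeds $2\diam(\xi_{n_1}(A))$ provided $\log Q$ is small relative to $\gamma/(3d)$ and $n_2 - n_1 \geq \ti N$, both of which are standing assumptions.

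For the last assertion, chain
\[
|Df_c^{n_2}(c)| = |Df_c^{n_1}(c)|\cdot|Df_c^{n_2-n_1}(\xi_{n_1}(c))| \geq e^{\gamma n_1}\exp\!\left(\tfrac{3\gamma}{4d}(p+1)+\gamma_H L\right).
\]
Under the hypothesis $\gamma \leq \gamma_H$ one has $\gamma_H \geq \gamma/3$, and since $p+1 \leq n_1$, the excess $(2\gamma/3)n_1$ coming from the initial segment comfortably dominates the possible deficit $[\gamma/3 - 3\gamma/(4d)](p+1)$ incurred during the bound period when $d \geq 3$; elementary rearrangement then yields $|Df_c^{n_2}(c)| \geq e^{(\gamma/3)n_2}$. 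The main technical point is establishing the clean bound-period rate $3\gamma/(4d)$, which requires careful juggling of \eqref{bdforp}, \eqref{expbp} and \eqref{quarter-ga} together with the approximate identity $\gamma_{p-1}+\alpha_p+\beta \approx \gamma_p+\alpha_{p+1}+\beta$ coming from \eqref{alpha_gamma_1}--\eqref{alpha_gamma_2}, and the absorption of all sub-exponential constants into the rate---this uses crucially that $\delta'$ is taken sufficiently small so that $p$ and hence $\ti N$ are large.
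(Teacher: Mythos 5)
Your proof follows essentially the same route as the paper's: split $[n_1,n_2]$ into the bound period (Lemma \ref{bound-exp} together with \eqref{erest}--\eqref{quarter-ga}, giving the rate $3\gamma/(4d)$) and the ensuing free period (Lemma \ref{oel}), combine to get $\gamma_2 \geq \gamma/(3d)$, use the weak parameter dependence for the doubling of the diameter, and use $p<n_1$ from Lemma \ref{bound-length} plus $\gamma\leq\gamma_H$ for the final estimate $|Df_c^{n_2}(c)|\geq e^{(\gamma/3)n_2}$. The only notable deviation is cosmetic: on the free segment the paper invokes the second inequality of Lemma \ref{oel} (constant $C$ independent of $U$, available because the stretch ends in a return) rather than the $U$-dependent constant $C_U$ you quote, which is the safer choice when the free period is short and $\Delta$ is much larger than $\Delta'$; with that substitution your constant-absorption step matches the paper's.
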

  \begin{proof}
   We use Lemma~\ref{bound-exp} and Lemma~\ref{oel} and equations~\eqref{erest} and~\eqref{quarter-ga} to conclude, with $p_1$ the bound period after $n_1$, and $\ell_1$ the following free period,
   \[
|Df_c^{p_1+1}(\xi_{n_1}(c))| \geq e^{\frac{3 \gamma_{p_1}}{4d} p_1 }. 
    \]
    and therefore, since $\gamma_{p_1} \geq \ga$, 
    \[
|Df_c^{n_2-n_1}(\xi_{n_1}(c))| \geq e^{\frac{3 \ga}{4d} p_1 } C e^{\gamma_H \ell_1} \geq e^{\gamma_2 (n_2-n_1)},
\]
for some $\gamma_2 \geq \ga/(3d)$. Moreover, by the weak parameter dependence property, 
   \[
\diam(\xi_{n_1+p_1+\ell_1}(A)) \geq CQ^{-(p_1 + \ell_1)} e^{\ga_H \ell_1} e^{(1-\ka)r} \diam(\xi_{n_1}(A)) \geq 2 \diam(\xi_{n_1}(A)) ,
\]
since $r$ is large, and $Q > 1$ is comparatively small ($\log Q < \min\{\ga_H, \ga/(3d) \}$). Since $p_1< n_1$ by Lemma~\ref{bound-length}, using that $n_1$ is large, 
 \[
|Df_c^{n_2}(c)| \geq Ce^{\ga n_1 + \frac{3 \ga}{4d} p_1 + \gamma_H \ell_1} \geq e^{(\gamma/3) n_2}.
\]
   \end{proof}


\subsection{Main Distortion Lemma}

Let us first state the main distortion lemma, see Lemma 3.14,~\cite{ABC-1}. It can be used in our situation as well, even if the map is not slowly recurrent. The constant $\ti{\ga}$ in the following lemma can be chosen to be arbitrarily small, but we may have to adjust $\QQ$, i.e. $\vep$ to it. We want to be able to use the lemma related to a small ``critical'' $\ga_C > 0$ defined later. 

\begin{Lem}[Main distortion lemma]\label{md}
Let $\varepsilon'>0$ and $\ti{\gamma} > 0$. Then there exists $N$ large enough such that the following holds: If $A\subset\EE_{\nu}(\ti{\gamma})$ is a partition element and $\nu \geq N$ is a return time or does not belong to the bound period, and $\nu'$ is the next free return, then we have
\begin{equation} 
\left\vert \frac{Df_{c_1}^{n}(c_1)}{Df_{c_2}^{n}(c_2)} -1\right \vert \leq\varepsilon'
\end{equation}
for $c_1,c_2\in A$ and for $\nu\leq n\leq\nu'$, provided that $A$ is still a partition element at the time $n$.
\end{Lem}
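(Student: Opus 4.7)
The plan is to rewrite the ratio as a telescoping product and reduce the problem to bounding a single sum. Since $f_c(z)=z^d+c$ gives $Df_c(z)=dz^{d-1}$ and $\xi_1(c)=c$, the chain rule yields
\[
\frac{Df_{c_1}^n(c_1)}{Df_{c_2}^n(c_2)} = \prod_{k=1}^n \left( \frac{\xi_k(c_1)}{\xi_k(c_2)} \right)^{d-1},
\]
so the conclusion follows once we bound
\[
\Sigma_n := \sum_{k=1}^n \frac{|\xi_k(c_1) - \xi_k(c_2)|}{|\xi_k(c_2)|}
\]
by a small multiple of $\vep'$. The initial segment $k\leq N_L$ contributes $O(\vep)$ by the transversality Lemma \ref{levin}, which can be absorbed by shrinking $\QQ$.

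For $k > N_L$ I would enumerate the free returns as $N_L < \nu_1 < \nu_2 < \ldots \leq n$, with associated bound periods $[\nu_i + 1, \nu_i + p_i]$ and depths $r_i \sim -\log|\xi_{\nu_i}(c_2)| \geq \De'$. At each free return the partition condition (Definition \ref{partitions}) gives immediately
\[
\frac{|\xi_{\nu_i}(c_1) - \xi_{\nu_i}(c_2)|}{|\xi_{\nu_i}(c_2)|} \leq \frac{\diam(\xi_{\nu_i}(A))}{\dist(\xi_{\nu_i}(A),0)} \lesssim \frac{1}{r_i^2}.
\]
On each free segment after a bound period and before the next return, the orbit avoids $U$, so Lemma \ref{oel} provides exponential expansion; combined with the weak parameter dependence \eqref{weak-dist}, the diameters $\diam(\xi_k(A))$ grow geometrically toward the next return, making the corresponding partial sum geometric and dominated by the contribution at the next free return. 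During each bound period, Definition \ref{bound_period} gives both $|\xi_{\nu_i+j}(c_1)-\xi_{\nu_i+j}(c_2)| \leq 2e^{-\be j}|\xi_j(c_2)|$ and $|\xi_{\nu_i+j}(c_2)|\gtrsim |\xi_j(c_2)|$; combining this shadowing with the sharp end-of-bound-period estimate $e^{(\ga_{p-1}+\al_p)p}\sim e^{dr_i}$ from Lemma \ref{bound-exp} (equation \eqref{bdforp}), one shows that the bound-period contribution is a geometric-type series whose final term is comparable to $1/r_i^2$.

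Summing, $\Sigma_n \lesssim \vep + \sum_i 1/r_i^2$. Lemma \ref{adjacent-returns} guarantees $\diam(\xi_{\nu_{i+1}}(A))\geq 2\diam(\xi_{\nu_i}(A))$, so the diameters at free returns grow geometrically. Coupled with $\diam(\xi_{\nu_i}(A)) \lesssim e^{-r_i}/r_i^2$ from the partition condition and the upper bound $\diam \leq S$, a counting argument (at most $O(1)$ free returns at each depth scale, and $r_i \geq \De'$) yields $\sum_i 1/r_i^2 \lesssim 1/\De'$. Taking $\De'$, and therefore the threshold $N$, sufficiently large makes this $\leq \vep'$, completing the argument.

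\paragraph{Main obstacle.} The delicate step is handling the bound period. A naive application of weak parameter dependence over a bound period of length $p_i \sim r_i/\oli{\ga}$ introduces a factor $Q^{p_i}$ that would swamp the $1/r_i^2$ coming from the partition condition. The trick will be to use instead the stronger shadowing in Definition \ref{bound_period} together with the optimal expansion \eqref{expbp} from Lemma \ref{bound-exp}, so that the bound-period contribution is bounded in the same order of magnitude as the free-return contribution, uniformly in $r_i$. A secondary technical point is that pseudo-returns (when $\xi_k(A)$ meets $U'\setminus U$) require a small variant of the same argument, but with $|\xi_k(c_2)|$ bounded below by $\de'$.
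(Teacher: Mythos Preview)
Your overall plan mirrors the paper's proof: reduce to the sum $\Upsilon = \sum_k |\xi_k(c_1)-\xi_k(c_2)|/|\xi_k(c_2)|$, split into bound, free, and tail contributions, use Lemma~\ref{adjacent-returns} to sum over depth scales, and handle pseudo-returns separately. The free-period and summation steps are fine and essentially identical to the paper's.

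The gap is exactly where you flagged it, but your proposed fix does not work as written. From Definition~\ref{bound_period} you correctly extract
\[
|\xi_{\nu_i+j}(c_1)-\xi_{\nu_i+j}(c_2)| \leq 2e^{-\beta j}|\xi_j(c_2)|
\quad\text{and}\quad
|\xi_{\nu_i+j}(c_2)|\gtrsim |\xi_j(c_2)|,
\]
but these two alone give a ratio bound of $2e^{-\beta j}$, whose sum over $j$ is $O(1/\beta)$ --- a fixed constant, \emph{not} $O(1/r_i^2)$. The end-of-bound-period relation from Lemma~\ref{bound-exp} only tells you the size of $p_i$; it does not rescue the sum, since the early terms of your series are each of order $1$ regardless of $r_i$. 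With unboundedly many returns, an $O(1)$ contribution per bound period cannot be absorbed into $\sum_i 1/r_i^2$.

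What is missing is the $1/r_i^2$ factor coming from the \emph{partition condition at the return itself}, carried through the bound period by the distortion Lemma~\ref{bound-dist} rather than by the binding inequality. The paper uses that
\[
|\xi_{\nu_i+1}(c_1)-\xi_{\nu_i+1}(c_2)| \sim e^{-(d-1)r_i}\diam(\xi_{\nu_i}(A)) \sim \frac{e^{-dr_i}}{r_i^2} \sim \frac{|\xi_{\nu_i+1}(c_2) - \xi_1(c_2)|}{r_i^2},
\]
then propagates this via Lemma~\ref{bound-dist} to obtain
\[
|\xi_{\nu_i+j}(c_1)-\xi_{\nu_i+j}(c_2)| \lesssim \frac{|\xi_{\nu_i+j}(c_2)-\xi_{j}(c_2)|}{r_i^2},
\]
and only \emph{then} applies the binding inequality to the right-hand side. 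This yields a ratio bound $\lesssim e^{-\beta j}/r_i^2$, which does sum to $C/r_i^2$ per bound period as you need. In short: track the actual diameter of $\xi_{\nu_i+j}(A)$ through the bound period using Lemma~\ref{bound-dist}, rather than bounding $|\xi_{\nu_i+j}(c_1)-\xi_{\nu_i+j}(c_2)|$ directly by the binding condition.
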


For consistency we include the proof of this important lemma, although its proof is almost the same, verbatim, as the proof of Lemma 3.14 in~\cite{ABC-1} (alternatively see~\cite[Lemma 7.3]{MA7}). 
\begin{proof}
The exponent cannot drop too much from $\nu$ until $\nu'$ due to Lemma~\ref{adjacent-returns}. In fact, the exponent $\ga_n = \ga_n(c)$ at $n$, satisfies $\ga_n \geq \ti{\ga}/3$. Note that the weak parameter dependence property holds as soon as $c\in \EE_n(\ti{\ga}/3)$ (if we choose $\ga=\ga_Q >0$ small enough in Lemma~\ref{levin}), so we can use Lemmas 3.2 and 3.4 in~\cite{ABC-1}. The lemma then reduces to check whether the following sum can be made arbitrarily small:
\begin{equation}
\Upsilon:=\sum_{j=1}^{n-1}\frac{\vert \xi_{j}(a)-\xi_{j}(b) \vert}{|\xi_{j}(b)|}.
\end{equation}

We divide the sum into three parts as follows. Let $(\nu_k)$ be the free returns before time $n$, where $k\leq s$. In other words, $\nu=\nu_s$ and $\nu'=\nu_{s+1}$. Let also $p_k$ be the length of the associated bound period of the return $\nu_k$. The estimate of $\Upsilon$ is then divided into the following parts:
\begin{align*}
\Upsilon=\sum_{k=1}^{s}\sum_{j=\nu_{k-1}}^{\nu_{k-1}+p_{k-1}}\frac{\vert \xi_{j}(a)-\xi_{j}(b)\vert}{|\xi_{j,l}(b)|}&+\sum_{k=1}^{s}\sum_{j=\nu_{k-1}+p_{k-1}+1}^{\nu_{k}-1}\frac{\vert \xi_{j}(a)-\xi_{j}(b)\vert}{|\xi_{j}(b)|} \\
&+\sum_{j=\nu_s}^{n-1}\frac{\vert \xi_{j}(a)-\xi_{j}(b)\vert }{|\xi_{j}(b)|} \\
&=:\Upsilon_B + \Upsilon_F + \Upsilon_T.
\end{align*}
Here $\Upsilon_B$ denotes the contribution from bound periods, while $\Upsilon_F$ the contribution from free periods, and $\Upsilon_T$ the contribution from the last return $\nu_s$ up until time $n$.

For the bound periods, we use the definition of bound period, distortion, to estimate each sum in the sums in $\Upsilon_B$. First we note that, by the weak parameter dependence property, if $|\xi_{\nu_{k-1}}(a) - \xi_{\nu_{k-1}}(b)| \sim e^{-r_{k-1}}/r_{k-1}^2$, 
\begin{align}
 |\xi_{\nu_{k-1}+1}(a) - \xi_{\nu_{k-1}+1}(b)| &\sim e^{-r_{k-1}(d-1)} |\xi_{\nu_{k-1}}(a) - \xi_{\nu_{k-1}}(b)| \\
 &\sim \frac{e^{-r_{k-1}d}}{r_{k-1}^2} \sim \frac{|\xi_{\nu_{k-1}+1}(b) - \xi_1(b)|}{r_{k-1}^2} .
\end{align}
Therefore, 
\begin{align}
\Upsilon_B^k = \sum_{j=\nu_{k-1}}^{\nu_{k-1}+p_{k-1}} &\frac{\vert \xi_{j}(a)-\xi_{j}(b)\vert}{|\xi_{j}(b)|} \lesssim \frac{1}{r_{k-1}^2} + \sum_{j=\nu_{k-1}+1}^{\nu_{k-1}+p_{k-1}} \frac{\vert \xi_{j}(a)-\xi_{j}(b)\vert}{|\xi_{j}(b)|} \\
                                                 &\lesssim \frac{1}{r_{k-1}^2} + \sum_{j=\nu_{k-1}+1}^{\nu_{k-1}+p_{k-1}} \frac{|Df^{j-\nu_{k-1}-1}(\xi_{\nu_{k-1} +1}(b))| \vert \xi_{\nu_{k-1}+1}(a)-\xi_{\nu_{k-1}+1}(b) \vert}{|\xi_j(b)|} \\
 &\lesssim \frac{1}{r_{k-1}^2} + \sum_{j=\nu_{k-1}+1}^{\nu_{k-1}+p_{k-1}} \frac{|Df^{j-\nu_{k-1}-1}(\xi_{\nu_{k-1} +1}(b))| e^{-r_{k-1}d}}{r_{k-1}^2|\xi_j(b)|} 
 \\
 &\lesssim \frac{1}{r_{k-1}^2} + \sum_{j=\nu_{k-1}+1}^{\nu_{k-1}+p_{k-1}} \frac{|Df^{j-\nu_{k-1}-1}(\xi_{\nu_{k-1} +1}(b))| |\xi_{\nu_{k-1}+1}(b) - \xi_{1}(b)| }{r_{k-1}^2|\xi_j(b)|} \\ 
&\lesssim \frac{1}{r_{k-1}^2} +
\sum_{j=\nu_{k-1}+1}^{\nu_{k-1}+p_{k-1}} \frac{|\xi_{j}(b) - \xi_{j-\nu_{k-1}}(b)|}{|\xi_j(b)|} \\
&\lesssim \frac{1}{r_{k-1}^2} +
                                                   \sum_{j=\nu_{k-1}+1}^{\nu_{k-1}+p_{k-1}} \frac{e^{-\be (j-\nu_{k-1})}}{r_{k-1}^2} \leq \frac{C}{r_{k-1}^2}.
\end{align}

Given $r\geq\Delta$, let $K(r)$ be the set of indices $k$ such that $|\xi_{\nu_k}(A)| \sim e^{-r}$, and let $\hat{k}(r)$ be the largest index contained in $K(r)$. Then it follows from Lemma~\ref{adjacent-returns} that
\begin{equation}
\Upsilon_B=\sum_{k=1}^{s-1}\Upsilon_{B}^{k}\leq\sum_{r\geq\Delta}\sum_{k\in K(r)}\Upsilon_{B}^{k}\lesssim \sum_{r\geq\Delta}\Upsilon_{B}^{\hat{k}(r)} \lesssim \sum_{r\geq\Delta} \frac{1}{r^2}\lesssim \frac{1}{\Delta},
\end{equation}

To estimate the contribution from the corresponding free period, we use the weak parameter dependence and Lemma~\ref{oel} to obtain
\begin{align*}
\vert \xi_{\nu_{k}}(a)-\xi_{\nu_{k}}(b)\vert &\geq \frac{1}{Q^{\nu_{k}-j}} \vert Df_{a}^{\nu_{k}-j}(\xi_{j}(a))\vert \xi_{j}(a)-\xi_{j}(b)\vert \\
&\gtrsim \left(\frac{e^{\gamma_H}}{Q}\right)^{\nu_{k}-j} \vert \xi_{j}(a)-\xi_{j}(b) \vert
\end{align*}
for $\nu_{k-1}+p_{k-1}+1\leq j\leq \nu_{k}-1$. Since $Q$ is small compared to $e^{\ga_H}$ we get the each sum $\Upsilon_F^k$ in $\Upsilon_F$ can be estimated by 
\[
\Upsilon_{F}^{k}\lesssim \frac{\vert \xi_{\nu_{k}}(a) - \xi_{\nu_{k}}(b)\vert}{e^{-r}} \sum_{j=\nu_{k-1}+p_{k-1}+1}^{\nu_{k}-1}\left(\frac{Q}{e^{\gamma_H}}\right)^{\nu_{k}-j}\lesssim \frac{\vert \xi_{\nu_{k}}(a) - \xi_{\nu_{k}}(b)\vert}{e^{-r}}.
\]
Using the same argument as in the estimate of the contribution from the bound periods, we find that
\[
\Upsilon_F=\sum_{k=1}^{s-1}\Upsilon_{F}^{k}\leq\sum_{r\geq\Delta}\sum_{k\in K(r)}\Upsilon_{F}^{k}\lesssim \sum_{r\geq\Delta}\Upsilon_{F}^{\hat{k}(r)} \lesssim \sum_{r\geq\Delta} \frac{1}{r^2}\lesssim \frac{1}{\Delta}.
\]

We now consider the tail $\Upsilon_T$. It is easy to see that we may assume that $\nu_{s}+p_s+1\leq n< \nu_{s+1}$, otherwise $n$ belongs to a bound period or is simply a free return which we already have considered. For this purpose, we consider pseudo-returns, and we let $\nu_{s}+p_s+1\leq q_1\leq\cdots\leq q_t\leq n$ be the indices of these returns. By definition, $\xi_{q_k}(a)\cap U'\neq\emptyset$ and $\xi_{q_k}(a)\cap U=\emptyset$. For pseudo-returns, bound periods and free periods are defined in a similar way. As in the previous estimates, the contribution to the distortion between any two pseudo-returns of index $q_k$ and $q_{k+1}$ is a constant times
\[
\frac{\vert \xi_{q_k}(a) - \xi_{q_k}(b) \vert }{e^{-r_k}} + \frac{\vert \xi_{q_{k+1}}(a) - \xi_{q_{k+1}}(b) \vert}{e^{-r_{k+1}}},
\]
where $r_k$ and $r_{k+1}$ such that $|\xi_{q_k}(b)| \sim e^{-r_k}$ and $|\xi_{q_{k+1}}(b)| \sim e^{-r_{k+1}}$. The difference here is that, at a pseudo-return, the only thing we know about the length of our interval is that $\vert \xi_{q_k}(a) - \xi_{q_k}(b) \vert \leq S$, where $S = \varepsilon_1 \delta$ is the large scale. With similar methods and notation used for estimating the bound and free contributions, we have
\begin{align*}
\Upsilon_T &= \left(\sum_{j = \nu_s}^{q_1} + \sum_{k = 1}^{t-1} \sum_{j = q_k}^{q_{k+1}-1} + \sum_{j = q_t}^{n-1}\right) \frac{ \vert \xi_{j}(a) - \xi_{j}(b)\vert}{| \xi_{j}(b)|} \\
&\lesssim \frac{1}{r_s^2} + \sum_{k = 1}^t \frac{\vert \xi_{q_k}(a) - \xi_{q_k}(b)\vert}{| \xi_{q_k}(b)|} + \sum_{j = q_t}^{n-1} \frac{ \vert \xi_{j}(a) - \xi_{j}(b)\vert}{| \xi_{j}(b)|} \\
&\lesssim \frac{1}{\Delta^2} + \sum_{r = \Delta'}^\Delta \frac{\vert \xi_{q_{\hat{k}(r)}}(a) - \xi_{q_{\hat{k}(r)}}(b) \vert}{| \xi_{q_{\hat{k}(r)}}(b)|} + \frac{S}{\delta'} \\
&\lesssim \frac{1}{\Delta^2} + \varepsilon_1 \sum_{r = \Delta'}^\Delta e^{r - \Delta} + \varepsilon_1 \lesssim \frac{1}{\Delta^2} + \varepsilon_1,
\end{align*}
where we in the sum from $q_t$ to $n-1$ used Lemma~\ref{oel} (the first inequality, now with respect to $U'$) and that $|\xi_{j}(b)| > \delta' > \delta$ during this time.

Combining all these estimate above we arrive at 
\[
\Upsilon = \sum_{j=1}^{n-1} \frac{\vert \xi_{j}(a) - \xi_{j}(b) \vert}{| \xi_{j}(b)|} \lesssim \frac{1}{\Delta} + \varepsilon_1,
\]
and if $\delta$ and $\varepsilon_1$ are small enough, we reach the desired conclusion of strong distortion.


\end{proof}

From this lemma we can now deduce the strong version of~\eqref{weak-dist}, namely the following. For any $Q > 1$, there exists $\vep > 0$, $N_L > 0$, and $\de' > 0$ such that, for $N_L \leq k \leq n$ we have  
\begin{multline} 
Q^{-1} |Df_c^{n-k}(\xi_k(c))| |\xi_k(a) - \xi_k(b)| \\ \leq |\xi_n(a) - \xi_n(b)| \\ \leq Q |Df_c^{n-k}(\xi_k(c))| |\xi_k(a) - \xi_k(b)| \label{strong-dist}
\end{multline}
for all $a,b,c \in A_n$ belonging to the same partition element.

\subsection{Start-up}

Next we prove the following lemma that provides us with a suitable initial parameter square. (Compare with Lemma~3.15 in~\cite{ABC-1}.)

\begin{Lem}\label{startup}
Let $f_{c_0}$ be a Collet--Eckmann unicritical map. Let $N_L$ be as in Lemma~\ref{levin}, let $\varepsilon', \varepsilon_1>0$ be sufficiently small. Then there is a neighbourhood $U$ of the critical point such that, with $S = \varepsilon_1 \delta$ and for each sufficiently small $\varepsilon>0$ there is $N, \ti{N}'$ such that $\ti{N}' \leq N_L \leq N$ and such that for all $c\in\QQ$ we have the following:
\begin{enumerate}[(i)]
\item[$(i)$] For some $\gamma \geq \uli{\gamma}(c_0)(1-\varepsilon')$, one has for all $c \in \QQ$
\[
\left \vert Df_{c}^{k}(c)\right \vert \geq e^{\gamma k} \quad (\ti{N}' \leq k \leq N-1).
\]

\item[$(ii)$] $\mathcal{Q}$ is a partition element at time $N-1$, and $\xi_N(Q)$ either is an essential return or reaches the large scale $S$.

\item[$(iii)$] For all $c_1,c_2\in\QQ$ one has
\[
\left\vert \frac{Df_{c_1}^{k}(c_1)}{Df_{c_2}^{k}(c_2)}-1 \right \vert\leq\varepsilon' \quad (k = 0,1,\dots N).
\]

\item[$(iv)$] There exists a constant $K = K(\delta,\varepsilon_1) > 1$ such that for all $c \in \mathcal{Q}$ and $0 \leq k \leq N-1$
\[
K^{-1} \vert \xi_k(c) \vert \leq \vert \xi_k(c_0) \vert \leq K \vert \xi_k(c) \vert.
\]
\end{enumerate}
\end{Lem}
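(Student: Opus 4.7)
The plan is to propagate the Collet--Eckmann estimate at $c_0$ outward to every nearby parameter in two stages: a transversality stage up to time $N_L$, and a bootstrap stage from $N_L$ up to the first ``critical moment'' $N$. First I would fix $\gamma$ slightly less than $\uli{\ga}(c_0)$, say $\gamma=(1-\vep')\uli{\ga}(c_0)$, and choose $\ti{N}'$ large enough that $|Df_{c_0}^{k}(c_0)|\geq e^{\gamma k}$ for all $k\geq \ti{N}'$ (available from the definition of $\uli{\ga}(c_0)$). Picking $N_L$ from Lemma~\ref{levin} for this $\gamma$, and shrinking $\vep$ accordingly, the transversality estimate $|\xi'_n(c)/Df^{n-1}_c(c)-L|\leq q|L|$ propagates the exponential lower bound and bounded distortion of $\xi_k$ from $c_0$ to every $c\in \QQ$ for $k\leq N_L$, so that (i) and (iii) are already in place up to time $N_L$ while $\diam \xi_k(\QQ)\lesssim \vep \cdot |Df^{k-1}_c(c)|$ remains small.

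Next I would define $N$ to be the first $n\geq N_L$ at which either $\xi_n(\QQ)$ reaches the large scale $S=\vep_1\delta$ or $\xi_n(\QQ)$ is an essential return, i.e.\ $\xi_n(\QQ)\cap U\neq\emptyset$ and $\diam \xi_n(\QQ)\geq (1/2)\dist(\xi_n(\QQ),0)/\log^2 \dist(\xi_n(\QQ),0)$. For every $n<N$, minimality of $N$ combined with the trichotomy in Definition~\ref{partitions} forces $\QQ$ to remain a partition element at time $n$, which gives (ii). To push (i) and (iii) from $N_L$ up to $N$, I would run the following bootstrap on $n$: assuming that $\QQ\subset \EE_n(\gamma')$ for a suitable exponent $\gamma'$ slightly smaller than $\gamma$ (chosen to absorb the $(1+\vep')$ factors from distortion), Lemma~\ref{md} supplies $|Df^{n}_{c_1}(c_1)/Df^{n}_{c_2}(c_2)-1|\leq \vep'$, which combined with $|Df^{n}_{c_0}(c_0)|\geq e^{\gamma n}$ upgrades the lower bound to every $c\in \QQ$ at time $n$, closing the induction. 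For (iv), whenever $\xi_k(\QQ)\cap U\neq\emptyset$ the partition-element bound gives $\diam \xi_k(\QQ)\leq \dist(\xi_k(\QQ),0)/\log^2 \dist \ll |\xi_k(c_0)|$, and when $\xi_k(\QQ)\cap U=\emptyset$ we have $\diam \xi_k(\QQ)<S=\vep_1\delta\leq \vep_1|\xi_k(c_0)|$, so the comparability $K^{-1}|\xi_k(c)|\leq |\xi_k(c_0)|\leq K|\xi_k(c)|$ follows with a constant depending only on $\delta$ and $\vep_1$. Finiteness of $N$ is then automatic: the CE lower bound together with the weak parameter dependence property \eqref{weak-dist} forces $\diam \xi_k(\QQ)$ to grow at a definite exponential rate, so one of the two triggers must eventually fire.

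The main obstacle is the hand-off between the two stages. Transversality gives us everything we need on $[0,N_L]$ independently of partition structure, but Lemma~\ref{md} demands a CE-type hypothesis along the interval $[N_L,N]$; this chicken-and-egg situation is resolved precisely because the transversality stage already places $\QQ$ into $\EE_{N_L}(\gamma')$, which allows Lemma~\ref{md} to take over, and because no essential return can occur before $N$, so that $\QQ$ itself is the only partition element in play and only bound-period and pseudo-return contributions appear in the distortion sum, both of which are controllable through the standard estimates already used in the proof of Lemma~\ref{md}.
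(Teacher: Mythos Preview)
Your proposal is correct and follows essentially the same route as the paper: define $N$ as the first essential return or escape after $N_L$, observe that minimality of $N$ keeps $\QQ$ a partition element up to $N-1$, and derive (iv) directly from the partition-element size bounds. For (iii) both you and the paper invoke Lemma~\ref{md} from $N_L$ onward.

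The one difference worth noting is how (i) is obtained on $[N_L,N]$. You run a bootstrap through Lemma~\ref{md}: assume $\QQ\subset\EE_n(\gamma')$, extract distortion, transfer the CE bound from $c_0$ to every $c$, and advance. The paper instead proves (i) by a completely elementary one-step distortion estimate: from the partition-element diameter bound one gets $|Df_{c_1}(z)/Df_{c_2}(w)|\leq C$ with $C$ close to $1$ at every step, and chaining gives $|Df_c^k(c)|\geq C^{-k}|Df_{c_0}^k(c_0)|\geq C_0 e^{(\gamma_0-\log C)k}$. This sidesteps the chicken-and-egg issue you flag, since the one-step bound needs only the partition-element property and no CE hypothesis. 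Lemma~\ref{levin} is then applied \emph{after} (i) is in hand, solely to convert the phase-space growth into growth of $\diam\xi_k(\QQ)$, which furnishes the finiteness of $N$. (A small terminological point: transversality does not itself ``propagate'' the CE bound on $[0,N_L]$; that comes from continuity for small $\varepsilon$. Lemma~\ref{levin} converts phase derivatives into parameter derivatives once CE is known.)
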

We will use this lemma where $\vep' \leq 1/10$, so that $\ti{N}'$ can be set equal to $\ti{N}$ in Definition~\ref{EEga}.

\begin{proof}
For any complex $c_1,c_2,z,w$ we have
\[
\left\vert \frac{Df_{c_1}(z)}{Df_{c_2}(w)} - 1\right\vert \leq \left\vert \frac{z-w}{w^{d-1}}\sum_{k=0}^{d-2} z^k w^{d-2-k} \right\vert \leq \left\vert \frac{z-w}{w} \right\vert \sum_{k=0}^{d-2} \left(\left\vert \frac{z-w}{w} \right\vert + 1\right)^k.
\]
Assuming that $z$ and $w$ belong to a partition element, we have that
\[
\left\vert \frac{z-w}{w} \right\vert \leq 
\begin{cases}\, \Delta^{-2}\ &\mbox{if}\ z \in U\ \mbox{or}\ w \in U,\\
\,\varepsilon_1\ &\mbox{if}\ z,w \in \mathbb{C}\smallsetminus U.\end{cases}
\]
With $\eta = \min(\Delta^{-2},\varepsilon_1)$ we therefore have
\[
\left\vert \frac{Df_{c_1}(z)}{Df_{c_2}(w)} - 1\right\vert \leq (1+\eta)^{d-1} - 1.
\]
In particular,
\[
\left\vert \frac{Df_{c_1}(z)}{Df_{c_2}(w)} \right\vert \leq C,
\]
with $C > 1$ arbitrarily close to $1$ if $\delta$ and $\varepsilon_1$ are sufficiently small. Assuming that $\QQ$ is a partition element at time $n$, and that $c \in \QQ$, we get from the above distortion that
\[
\vert Df^k_c(c) \vert \geq C^{-k} \vert Df_{c_0}(c_0) \vert \geq C^{-k}C_0 e^{\gamma_0 k} \geq C_0 e^{\gamma k}.
\]
This tells us that as long as $\QQ$ is a partition element $f_c$, with $c \in \QQ$, will satisfy a Collet--Eckmann condition. We may thus apply Lemma~\ref{levin} to conclude that our interval will expand:
\[
\diam\left( \xi_k(\QQ)\right) \sim \vert \xi_k^\prime(c')\vert \diam\left( \QQ \right) \sim e^{\gamma k} \diam\left( \QQ \right).
\]
In particular we can make the perturbation small enough to find the time $N$ such that (i) and (ii) are valid.

By the choice of $N_L$, we can choose $\varepsilon>0$ sufficiently small such that $\QQ\subset\EE_{N_L}(\gamma)$ for some $\gamma$ arbitrarily close to $\gamma_0$.

By the definition of a partition element we can use Lemma~\ref{md} repeatedly starting from the time $N_L$ up to $N-1$ to get the distortion claimed in (iii).

To prove (iv) suppose first that $\xi_k(\QQ)$ is a return to $U$, and let $r$ be the integer satisfying
\[
e^{-r} \leq \dist(\xi_k(\mathcal{Q}),0) \leq e^{-r+1}.
\]
Then $\diam \left(\xi_k(\mathcal{Q})\right) \leq e^{-r+1}/r^2$, hence for every $c \in \mathcal{Q}$ we have that
\begin{align*}
\vert \xi_k(c) \vert &\geq \vert \xi_k(c_0) \vert - \diam \xi_k(\mathcal{Q}) \\
&\geq \left( 1 - \frac{e^{-r+1}}{\vert \xi_k(c_0) \vert r^2}\right)\vert \xi_k(c_0) \vert.
\end{align*}
Since $\vert \xi_k(c_0) \vert \geq \dist(\xi_k(\mathcal{Q}),0) \geq e^{-r}$ and $r \geq \Delta$, we conclude that
\[
\vert \xi_k(c) \vert \geq \left(1 - \frac{e}{\Delta^2} \right) \vert \xi_k(c_0) \vert.
\]
On the other hand, if we are outside of $U$, then
\[
\vert \xi_k(c) \vert \geq \left(1-\frac{S}{\delta}\right) \vert \xi_k(c_0) \vert = (1-\varepsilon_1)\vert \xi_k(c_0)\vert.
\]
This proves one of the inequalities, and the other one follows from exchanging $c_0$ and $c$ in the above.
\end{proof}

\section{Promotion}

In this part we introduce the key idea of \emph{promotion} which means that most of the parameters in $\QQ$ will reach a slow recurrence condition after several essential returns.

By Lemma~\ref{startup}, all critical orbits for parameters in $\QQ$ will follow the starting critical orbit up to time $N$ with bounded distortion and exponential growth of derivatives. If $\QQ$ has grown to the large scale under $\xi_N$, then $\xi_N(\QQ)$ covers mostly the Fatou set of $f_0$, and by a compactness argument, we can conclude that most parameters in $\QQ$ will land in the Fatou set of the corresponding map under $\xi_n$; See the next section for more details. Hence a very large portion of parameters in $\QQ$ are hyperbolic. Therefore, we may assume, without loss of generality, that $\xi_N(\QQ)$ has not reached the large scale yet and thus is an essential return.

At this time we may have to delete parameters that returns too deep at time $N$ in comparison to the starting map. Let $C_1 > 1$ be given and consider those $c \in \QQ$ such that 
\begin{equation}\label{appr2-ess}
\frac{\alpha_N(c)}{\alpha_{N}(c_0)}\leq C_1.
\end{equation}
This means that we delete parameters of measure
\[
 m\left(\{c \in \QQ : c \text{ does not satisfy~\eqref{appr2-ess} }\} \right) \lesssim m(\QQ) \frac{e^{-2C_1 \al_N(c_0) N}}{e^{-2 \al_N(c_0) N}}=\colon\delta_0(N) m(\QQ).
\]
Here, since $\alpha_N(c_0)$ is bounded, $\delta_0 = \delta_0(N)$ can be made arbitrarily small given $N$ large enough. We make the deletion so that the smallest collection of whole partition elements are deleted (this means that we may delete a little more than necessary). The remaining parameters are partitioned into partition elements as above. Each of these partition elements, containing $c$, is denoted by $A_N(c)$. Sometimes we just write $A_N$. However, note that $A_{N-1} = \QQ$ (there is no partition before time $N$). 

After this time $N$ we lose this kind of dependence on the starting orbit, hence we need to control the rate of recurrence in order to not lose too much growth of the derivative.

Let $N=\nu_0<\nu_1<\cdots$ be the free returns after time $N$. Suppose that $A = A_{\nu_j}$ is a partition element at time $\nu_j$. We will inductively introduce two sequences $\{\wt{\alpha}_{\nu_j}(A)\}$ and $\{\underline{\gamma}_{\nu_j}(A)\}$ which capture two key features for later analysis. Firstly, parameters $c \in A$ for which $\xi_{\nu_j}(c)$ are at distance $e^{-\wt{\alpha}_{\nu_j}(A)\nu_j}$ away from the critical point will be kept (and thus has controlled approach rate to the critical point), and secondly, we do not lose a lot of the derivative growth between two free returns.

We start with the time $\nu_0$. By Lemma~\ref{startup} (iv), $\xi_k(c)$ stays close to $\xi_k(c_0)$ and thus $\alpha_k(c)$ is comparable to $\alpha_k(c_0)$ for all $k\leq N$ and all $c\in A_{\nu_0}$.

\subsection*{Definition of $\wt{\alpha}_{\nu_j}(A)$.}
Put
$$\wt{\alpha}_{\nu_0}(A_{\nu_0})=\sup_{c\in A_{\nu_0}}\alpha_{\nu_0}(c).$$
Let $p_0$ and $\ell_0$ be the length of bound and free periods for the return time $\nu_0$, respectively. The following lemma is a consequence of Lemma~\ref{bound-exp}.

\begin{Lem} \label{bexp1}
Let $|\xi_{\nu_0}(c)| \sim e^{-r}$, for some $c \in A_{\nu_0}$, where $A_{\nu_0} \subset \QQ$ is a partition element satisfying (\ref{appr2-ess}). Then $|Df_{c}^{p_0+1}(\xi_{\nu_0}(c))| \geq e^{(1-\ka) r}$.
\end{Lem}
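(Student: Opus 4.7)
My plan is to deduce this lemma as a direct application of Corollary~\ref{bound-kappa} with $n = \nu_0 = N$, so the bulk of the work is to check the three standing hypotheses of that corollary in the present setup.

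First, $\xi_{\nu_0}(c) \in U'$ holds because, by the reduction made just before \eqref{appr2-ess}, $\xi_N(\QQ)$ is an essential return to $U \subset U'$, and $c \in A_N \subset \QQ$. The hypothesis $|\xi_{\nu_0}(c)| \sim e^{-r}$ is given, and $\be < \ga /(4d)$ is the standing constraint imposed on the binding exponent throughout the paper.

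The remaining hypothesis is $c \in \EE_{\nu_0}(\gamma)$ for some $\ga \geq \uli{\gamma}$. The plan is to combine Lemma~\ref{startup}(i) with the deletion condition \eqref{appr2-ess}. Applying Lemma~\ref{startup}(i) with $\vep' \leq 1/10$ (as stipulated just after its statement) and recalling that $\uli{\gamma} = (9/10)\uli{\ga}(c_0)$, we already have $|Df_c^k(c)| \geq e^{\uli{\gamma} k}$ for all $\ti{N} \leq k \leq N-1$ and every $c \in \QQ$. For the final step $k = N$, the chain-rule identity \eqref{initial} gives $|Df_c^N(c)| = |Df_c^{N-1}(c)| \cdot d\,|\xi_N(c)|^{d-1}$, so the derivative potentially drops by a factor of order $e^{-(d-1)r}$ at the return. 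This is precisely controlled by \eqref{appr2-ess}: it forces $\al_N(c) \leq C_1 \al_N(c_0)$, and Lemma~\ref{initial-lemma} bounds $\al_N(c_0)$ from above by $(\oli{\ga}(c_0)-\uli{\ga}(c_0))/(d-1)$ up to an $o(1)$ correction. For $C_1$ chosen small enough relative to $\uli{\gamma}/(\oli{\ga}(c_0)-\uli{\ga}(c_0))$, the resulting exponent at time $N$ remains positive, so that $c \in \EE_N(\gamma')$ for some $\gamma' > 0$; this already triggers Lemma~\ref{bound-length} to give $p_0 < N$, and since $p_0 \leq N-1$ we then have $\gamma_{p_0} \geq \uli{\gamma}$ directly from Lemma~\ref{startup}(i). (In fact, tightening the choice of $C_1$ one can arrange $c \in \EE_N(\uli{\gamma})$ outright, matching the statement of Corollary~\ref{bound-kappa} verbatim.)

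With all hypotheses verified, Corollary~\ref{bound-kappa} yields $|Df_c^{p_0+1}(\xi_{\nu_0}(c))| \geq e^{(1-\ka)r}$, which is the conclusion. The principal subtlety, and the reason \eqref{appr2-ess} was imposed with a carefully chosen $C_1$ in the first place, lies in the last paragraph: the single-step derivative drop of order $e^{-(d-1)r}$ at the return time must not destroy the accumulated Collet--Eckmann expansion built up through Lemma~\ref{startup}. The deletion of the small-measure set of parameters with overly deep first returns, quantified by $\delta_0 m(\QQ)$, is exactly the mechanism that preserves the Collet--Eckmann exponent at time $N$ and thereby enables the bound-period machinery to be launched at the first essential return.
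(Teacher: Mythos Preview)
Your proof is correct and follows essentially the same route as the paper: verify that $c \in \EE_{N}(\uli{\ga})$ by choosing $C_1$ close enough to $1$, and then invoke Corollary~\ref{bound-kappa}. The paper's proof is the two-line version of what you wrote; your expanded discussion of why the single-step drop at $k=N$ is controlled by \eqref{appr2-ess}, and your side remark that in fact only $\ga_{p_0}$ (not $\ga_N$) is really needed because $p_0 < N$ via Lemma~\ref{bound-length}, are both accurate elaborations. One small phrasing point: since the paper fixes $C_1 > 1$, ``$C_1$ chosen small enough'' should be read as ``$C_1$ chosen close enough to $1$,'' which is exactly how the paper puts it.
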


\begin{proof}
We can make $C_1$ as close to $1$ as we want so that $c \in A_{\nu_0}$ implies $c \in \EE_n( \uli{\ga})$. The lemma now follows from Corollary~\ref{bound-kappa}. 
 \end{proof}

Note that by the definition of $\ka$, Lemma~\ref{bexp1} says that $\diam \left(\xi_{\nu_0+p_0}(A_{\nu_0})\right)\geq e^{-\kappa r}$, where $\diam \left(\xi_{\nu_0}(A_{\nu_0})\right)\sim e^{-r}$. Now we fix some $\ka'$ such that $\ka < \ka' < 1$. We shall now define $\wt{\al}_{\nu_j}(A)$, where $A$ is a partition element at the free return time $\nu_j$.

Assume that $\dist(0, \xi_{\nu_j}(A)) \sim e^{-r_j} \sim e^{-\al_{\nu_j} \nu_j} $, and $A = A_{\nu_j}$ is a partition element. The lengths of the bound and free periods for this return are $p_j$ and $\ell_j$, respectively. Define
\begin{equation}\label{ta}
\wt{\al}_{\nu_{j+1}}(A) = \ka' \frac{\nu_j}{\nu_{j+1}} \sup\limits_{c \in A} {\al_{\nu_j}(c)} \quad\left( \leq \ka' \sup\limits_{c \in A} \al_{\nu_j}(c)\right).
\end{equation}
We do this at each return (including inessential returns). 

Note that since $\al_{\nu_j}(c)$ is almost constant in $A$, we have, for some constant $\ti{\ka}$ arbitrarily close to $1$ with $1>\ti{\ka}>\ka'$, that for any $c,\, c'\in A$,
$$\ti{\ka} \alpha_{\nu_j}(c')\leq \alpha_{\nu_j}(c)\leq \frac{1}{\ti{\ka}}\alpha_{\nu_j}(c').$$
Indeed, using the mean value theorem we find that for any $c,c' \in A$
\[
\left\vert \frac{\alpha_n(c)}{\alpha_n(c')} - 1 \right\vert \leq \frac{\left(\sup_{c \in A_{\nu_j}} e^{\alpha_n(c) n}\right) \vert \xi_n(c)- \xi_n(c') \vert}{\alpha_n(c') n } \sim \frac{1}{r_j^3}
\]
This, together with~\eqref{ta}, gives
\begin{equation}\label{tap}
\ti{\ka}\ka' \frac{\nu_j}{\nu_{j+1}} \al_{\nu_j}(c) \leq \wt{\al}_{\nu_{j+1}}(A) \leq \frac{\ka'}{\ti{\ka}}\frac{\nu_j}{\nu_{j+1}} \al_{\nu_j}(c)
\end{equation}
for all $c \in A$. 

At the next return we are going to delete from $A$ (which is a partition element at time $\nu_j$) parameters not satisfying
\begin{equation} \label{tilde-alpha}
|\xi_{\nu_{j+1}}(c)| \geq e^{-\wt{\al}_{\nu_{j+1}}(A) \nu_{j+1}}.
\end{equation}
In other words, if $c$ is deleted, then
\[
\al_{\nu_{j+1}}(c) > \ti{\al}_{\nu_{j+1}}(A).
 \]

We do this exclusion at each free return (actually it is going to be each essential return), see Lemma~\ref{freein} below. For a partition element $A$ at the return $\nu_j$, let us also define
\[
 \al_{\nu_j}(A) = \sup\limits_{c \in A} \al_{\nu_j}(c).
\]
So we have a procedure; given $A = A_{\nu_j}$ for some essential return $\xi_{\nu_j}(A)$ with $\dist(\xi_{\nu_j}(A),0) \sim e^{-\al_{\nu_j}(A) \nu_j}$, this induces an $\wt{\al}_{\nu_{j+1}}(A)$ at the next return, i.e. we have a mapping
\[
\al_{\nu_j}(A_{\nu_j}) \mapsto \wt{\al}_{\nu_{j+1}}(A_{\nu_j}),
\]
so that
\[
\al_{\nu_{j+1}}(A_{\nu_{j+1}}) \leq \wt{\al}_{\nu_{j+1}}(A_{\nu_j})
\]
for all partition elements $A_{\nu_{j+1}} \subset A_{\nu_j}$ that did not get deleted according to the rule (\ref{tilde-alpha}). 


\begin{Lem}\label{gogo}
Let $c\in A$ satisfy~\eqref{tilde-alpha}. Then $|Df_{c}^{p_{j+1}+1}(\xi_{\nu_{j+1}}(c))| \geq e^{\frac{ \uli{\ga}}{2d \oli{\ga}} r_j}$.
\end{Lem}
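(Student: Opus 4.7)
The plan is to apply Corollary \ref{bound-kappa} directly at the $(j+1)$-st free return. Define $r := -\log|\xi_{\nu_{j+1}}(c)|$, so $|\xi_{\nu_{j+1}}(c)| \sim e^{-r}$, with $r \geq \De'$ since $\xi_{\nu_{j+1}}(c) \in U'$, and $r \leq \wt\al_{\nu_{j+1}}(A)\nu_{j+1}$ by the hypothesis \eqref{tilde-alpha}. Corollary \ref{bound-kappa} applied with this $r$ supplies
\[
|Df_c^{p_{j+1}+1}(\xi_{\nu_{j+1}}(c))| \geq e^{(1-\ka) r},
\]
where $1 - \ka = \uli\ga/(4\oli\ga)$.

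Two ingredients are needed. The first, $\be < \ga/(4d)$ with $\ga \geq \uli{\ga}$, is a standing condition on the small exponent governing bound periods. The second, $c \in \EE_{\nu_{j+1}}(\ga)$ for some $\ga \geq \uli\ga$, is the substantive point and must be maintained inductively along the promotion scheme. Starting from $c \in \EE_N(\uli\ga)$ furnished by Lemma \ref{startup}(i), one propagates the Collet--Eckmann growth across the free returns $\nu_1 < \cdots < \nu_j$ using Lemma \ref{adjacent-returns}: at each surviving free return (i.e.\ where the analogue of \eqref{tilde-alpha} has been imposed), the geometric shrinking factor $\ka' < 1$ built into \eqref{ta} bounds the bound-period loss of derivative growth, and the free-period expansion from Lemma \ref{oel}, together with the weak parameter dependence \eqref{strong-dist}, compensates the deficit. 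Thus $\gamma_{\nu_{j+1}}(c)$ stays bounded below by $\uli\ga$ up to constants absorbed into prefactors.

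For the final numerical step, since $d \geq 2$ one has $\tfrac14 \geq \tfrac{1}{2d}$, and hence
\[
1 - \ka = \frac{\uli\ga}{4\oli\ga} \geq \frac{\uli\ga}{2d\oli\ga}.
\]
Combining with the bound from Corollary \ref{bound-kappa} yields $|Df_c^{p_{j+1}+1}(\xi_{\nu_{j+1}}(c))| \geq e^{(\uli\ga/(2d\oli\ga)) r}$, which is the claim, with $r$ interpreted as the depth of the $(j+1)$-st return.

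The main obstacle is the inductive verification that $c$ remains in $\EE_{\nu_{j+1}}(\ga)$ for some $\ga \geq \uli\ga$. This is precisely the role of the geometric shrinking factor $\ka' < 1$ in \eqref{ta} and the retention rule \eqref{tilde-alpha}, which are designed so that the cumulative bound-period losses along the chain of previous free returns do not erode the exponential growth below $\uli\ga$. Everything else, including the numerical comparison $(1-\ka) \geq \uli\ga/(2d\oli\ga)$ based on $d \geq 2$, is a routine bookkeeping exercise.
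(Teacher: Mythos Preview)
There is a genuine gap in your inductive step. You claim that $c \in \EE_{\nu_{j+1}}(\ga)$ for some $\ga \geq \uli{\ga}$, and you justify this by saying that Lemma~\ref{adjacent-returns} and the free-period expansion ``compensate the deficit'' so that $\ga_{\nu_{j+1}}(c)$ stays bounded below by $\uli{\ga}$. This does not work. Lemma~\ref{adjacent-returns} only yields $\ga_{n_2} \geq \ga/3$, not $\ga_{n_2} \geq \ga$; iterating it over $j$ returns would send the exponent to zero. In fact the paper shows explicitly (see \eqref{gammaI}) that during the promotion period the exponent may drop to $\ga_I$, which is strictly less than $\uli{\ga}$. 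Since Corollary~\ref{bound-kappa} requires $c \in \EE_n(\ga)$ with $\ga \geq \uli{\ga}$ at the return time $n = \nu_{j+1}$, you cannot invoke it as stated.

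The paper's argument avoids this issue entirely by a different mechanism. From the retention rule \eqref{tilde-alpha} and \eqref{tap} one has $|\xi_{\nu_{j+1}}(c)| > |\xi_{\nu_j}(c)| > \cdots > |\xi_{\nu_0}(c)|$: the returns become strictly shallower. Lemma~\ref{bound-order} then gives $p_{j+1} \leq p_0$, and Lemma~\ref{bound-length} applied at time $\nu_0 = N$ (where $c \in \EE_N(\uli{\ga})$ is guaranteed by the start-up Lemma~\ref{startup}) gives $p_0 < N$. Thus $p_{j+1} < N$, and so $\ga_{p_{j+1}} \geq \uli{\ga}$ simply because this time lies in the initial segment covered by Lemma~\ref{startup}(i). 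The expansion estimate of Lemma~\ref{bound-exp}/Corollary~\ref{bound-kappa} then goes through with the constant $1-\ka = \uli{\ga}/(4\oli{\ga})$, \emph{without} any hypothesis on the exponent at time $\nu_{j+1}$. What you are missing is precisely this use of Lemma~\ref{bound-order} to pull the bound period back into the start-up window.
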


\begin{proof}
For $c$ satisfying~\eqref{tilde-alpha}, by~\eqref{tap} we have
\begin{equation}
|\xi_{\nu_{j+1}}(c)| \geq e^{-\wt{\al}_{\nu_{j+1}}(A) \nu_{j+1}}\geq e^{-(\ka'/\ti{\ka})\alpha_{\nu_j}(c)\nu_j}>e^{-\alpha_{\nu_j}(c)\nu_j}=|\xi_{\nu_j}(c)|,
\end{equation}
where we have used the fact that $\ka'<\ti{\ka}$ in the third inequality. Inductively, we see that $|\xi_{\nu_{j+1}}(c)|>|\xi_{\nu_0}(c)|$. By Lemma~\ref{bound-order}, the length of the bound period for $\nu_{j+1}$ satisfies $p_{j+1}\leq p_0$, where $p_0$ is the length of the bound period for the return at the start-up time $\nu_0$.

Then the lemma follows in the same way as in the proof of Lemma~\ref{bexp1}.
\end{proof}

It turns out that parameter exclusions happen only at essential returns due to the following simple observation.

\begin{Lem}\label{freein}
Let $A$ be a partition element at time $\nu_j$. If the free return $\xi_{\nu_{j+1}}(A)$ is inessential, then~\eqref{tilde-alpha} is satisfied for all $c\in A$.
\end{Lem}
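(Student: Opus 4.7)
The condition \eqref{tilde-alpha} can be rephrased as a single inequality on return depths. Define $r_m := -\log\dist(\xi_{\nu_m}(A),0)$ at every free return $\nu_m$ (this agrees up to $O(1)$ with the rounded $r$ used in~\eqref{essele}). The definitions give directly $\al_{\nu_j}(A)\,\nu_j = \sup_{c\in A}(-\log|\xi_{\nu_j}(c)|) = r_j$, and hence $\wt\al_{\nu_{j+1}}(A)\,\nu_{j+1} = \ka' r_j$. Since every $c\in A$ satisfies $|\xi_{\nu_{j+1}}(c)| \geq \dist(\xi_{\nu_{j+1}}(A),0) = e^{-r_{j+1}}$, the lemma reduces to showing $r_{j+1} \leq \ka' r_j$.

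I would prove this by playing three size estimates for $\diam(\xi_{\nu_{j+1}}(A))$ against each other. The inessentiality assumption gives the upper bound $\diam(\xi_{\nu_{j+1}}(A))<\tfrac12 e^{-r_{j+1}}/r_{j+1}^2$. Corollary~\ref{bound-kappa} applied at the return $\nu_j$, Lemma~\ref{oel} applied on the subsequent free stretch, and the strong distortion estimate~\eqref{strong-dist} combine to yield the lower bound $\diam(\xi_{\nu_{j+1}}(A))\gtrsim e^{(1-\ka)r_j}\,\diam(\xi_{\nu_j}(A))$. A matching partition-scale lower bound $\diam(\xi_{\nu_j}(A))\gtrsim e^{-r_j}/r_j^2$ follows directly from the subdivision rule~\eqref{essele} when $\nu_j$ is itself essential. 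When $\nu_j$ is inessential, it is obtained by induction on $j$: one traces $A$ back to the most recent essential return (or to the start-up time $N$ from Lemma~\ref{startup}), at which point the partition-scale lower bound holds, and iterates the bound-period expansion through the intermediate inessential returns, at none of which a parameter has been excluded by the inductive hypothesis.

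Chaining the three estimates gives $e^{-\ka r_j}/r_j^2 \lesssim e^{-r_{j+1}}/r_{j+1}^2$, which rearranges to $r_{j+1} < \ka r_j + 2\log(r_j/r_{j+1}) + O(1)$. If $r_{j+1}\geq\ka' r_j$ were to hold, then $\log(r_j/r_{j+1})\leq-\log\ka'$, forcing $(\ka'-\ka)r_j\leq -2\log\ka'+O(1)$, an absolute bound on $r_j$ depending only on $\ka$ and $\ka'$. Since every return satisfies $r_j\geq\De'$, choosing $\de'$ small enough (so that $\De'$ exceeds this bound) supplies the contradiction and proves $r_{j+1}<\ka' r_j$, as required. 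The main technical hurdle is the inductive propagation of the partition-scale lower bound across a potentially long chain of inessential returns: one has to verify that the iterated bound-period growth factors $e^{(1-\ka)r_\ell}$ neither wash out the lower bound inherited from the previous essential return nor inflate the diameter past the partition-element ceiling $\dist_\ell/r_\ell^2$ along the way (which would contradict the inessentiality at the intermediate returns). Both concerns remain consistent with the inductively propagated inequality $r_{\ell+1}\leq\ka' r_\ell$, but the bookkeeping of the three nested constants $\ka<\ka'<\ti\ka<1$ together with the bounded rounding errors in the $r_m$ makes this step the delicate heart of the argument.
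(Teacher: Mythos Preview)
Your approach is essentially the same as the paper's: both combine the inessential upper bound $\diam(\xi_{\nu_{j+1}}(A)) < \tfrac12 e^{-r_{j+1}}/r_{j+1}^2$ with a lower bound coming from the bound-period expansion at $\nu_j$ (Corollary~\ref{bound-kappa}) to force $r_{j+1}\lesssim \ka r_j$, which beats $\ka' r_j$ since $\ka<\ka'$. The paper simply asserts $\diam(\xi_{\nu_{j+1}}(A))\geq e^{-\ka r_j}$ (citing Lemma~\ref{gogo}, though Corollary~\ref{bound-kappa} is really what is used) and absorbs the polynomial factor $1/r_j^2$ into the gap $\ka'-\ka$, without separating out the case where $\nu_j$ itself is inessential.

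Your treatment is more careful on exactly this point, and you are right that when $\nu_j$ is inessential one must trace back to the last essential return $\nu_{j_0}$ and argue inductively. One caution: the ``partition-scale lower bound'' $\diam(\xi_{\nu_j}(A))\gtrsim e^{-r_j}/r_j^2$ that you want to propagate need not hold with a uniform implied constant at an inessential $\nu_j$; the inessential condition only gives the \emph{upper} bound. What actually propagates cleanly is the cumulative lower bound $\diam(\xi_{\nu_m}(A))\gtrsim e^{(1-\ka)\sum_{j_0}^{m-1}r_\ell - r_{j_0}}/r_{j_0}^2$, and combining this at $m=j+1$ with the inessential upper bound there yields $r_{j+1}\leq \ka r_{j_0}-(1-\ka)\sum_{j_0+1}^{j}r_\ell+O(\log r_{j_0})+O(j-j_0)$. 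Since the $r_\ell$ are decreasing (by induction) and bounded below by $\De'$, the number of inessential steps is $O(r_{j_0}/\De')$, so the accumulated error is negligible and one still gets $r_{j+1}<\ka' r_j$ for $\De'$ large. This is the precise form of the ``bookkeeping'' you flag; with this adjustment your argument goes through.
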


\begin{proof}
To prove this happens, $\alpha_{\nu_{j+1}}(c)\leq \wt{\al}_{\nu_{j+1}}(A)$ for all $c\in A$. Assume that $|\xi_{\nu_{j+1}}(c)|\sim e^{-r_{j+1}}=e^{-\alpha_{\nu_{j+1}}(c)\nu_{j+1}}$. By the definition of inessential returns (cf. Section 2), we have
$$\diam\xi_{\nu_{j+1}}(A)\leq \frac{1}{2}\frac{e^{-r_{j+1}}}{r_{j+1}^{2}}.$$
By the above Lemma~\ref{gogo}, we also have
$$\diam\xi_{\nu_{j+1}}(A)\geq e^{-\ka r_{j}}.$$
Combining the above we obtain
$$\ka\alpha_{\nu_j}(c)\nu_j=\ka r_j \geq r_{j+1}=\alpha_{\nu_{j+1}}(c)\nu_{j+1},$$
which gives
$$\alpha_{\nu_{j+1}}(c)\leq \ka\alpha_{\nu_j}(c)\frac{\nu_j}{\nu_{j+1}}\leq\frac{\ka}{\ka'}\wt{\al}_{\nu_{j+1}}(A)<\wt{\al}_{\nu_{j+1}}(A)$$
since $\ka<\ka'$.
\end{proof}

Recall that $A = A_{\nu_j}$ is a partition element at time $\nu_j$ so we set $\wt{\al}_k(A) = \wt{\al}_{\nu_{j+1}}(A)$ for $\nu_{j} < k \leq \nu_{j+1}$, so that $\wt{\al}_k(A)$ is defined also between returns. 

\subsection*{Definition of $\uli{\gamma}_{\nu_j}$.}
Recall inequality~\eqref{alpha_gamma_1}: For $n$ large enough we have
\[
\gamma_n \geq \gamma_{n-1} - (d-1)\alpha_{n} - \frac{C}{n}.
\]
Let $A_n$ be a partition element at time $n$ and put
\[
 \gamma_{n}(A_n) = \inf\limits_{c \in A_n} \gamma_n(c) \quad \text{ and } \quad
 \gamma_{n-1}(A_n) = \inf\limits_{c \in A_n} \gamma_{n-1}(c) 
\]
We define
\[
\uli{\gamma}_{\nu_0}(A_{\nu_0})=\gamma_{\nu_0-1}(A_{\nu_0}) - (d-1) \al_{\nu_0}(A_{\nu_0}) - \frac{C}{\nu_0},
\]
so that $\uli{\gamma}_{\nu_0}(A_{\nu_0}(c_0)) = \uli{\gamma}_{N}$ is at least $\ga$ from Lemma~\ref{startup} (i) (for some suitable $\vep' \leq 1/10$). For a partition element $A$ at time $\nu_j$ we inductively define
\begin{equation} \label{gamma-bar}
  \uli{\ga}_{\nu_{j+1}}(A) = \left( \ga_{\nu_j}(A) + \ga_H \frac{\ell_j}{\nu_j}\right) \frac{\nu_j}{\nu_{j+1} - 1},
%
 %
 %
 %
 \end{equation}
 and put $ \uli{\ga}_{\nu_{j+1}-1}(A) = \uli{\ga}_{\nu_{j+1}}(A)$. We will also define $\uli{\gamma}_n$ when $n$ is not a return time below.
 
\begin{Rem}
Note that $\uli{\gamma}_{\nu_j}$ assigns a number for each partition element. So, if $A_{\nu_j}$ and $A_{\nu_{j+1}}$ are respectively partition elements at time $\nu_j$ and $\nu_{j+1}$ and moreover $A_{\nu_{j+1}}\subset A_{\nu_j}$, we have $\uli{\gamma}_{\nu_j}(A_{\nu_{j+1}})=\uli{\gamma}_{\nu_j}(A_{\nu_j})$.
\end{Rem}

Suppose $\uli{\ga}_{k}(A)$ is defined for $\nu_0 \leq k \leq \nu_j$, that $\uli{\ga}_{\nu_j}(A) = \uli{\ga}_{\nu_j-1}(A)$, that (for $j \geq 1$)
\begin{equation}
 \ga_{k}(c) \geq \uli{\ga}_{k}(A) + (d-1)\al_{k+1}(c), \text{ for $\nu_{j-1} < k < \nu_j$}, \label{gamma-bar-ineq} 
\end{equation}
and that $\ga_{\nu_j}(c) \geq \uli{\ga}_{\nu_j}(A)$ at returns. 

If we follow a partition element $A$, and $c \in A$, we get, for those parameters not deleted, using that $|Df^{p_j+1}(\xi_{\nu_j}(c))| \geq e^{(1-\ka) r_j}$ by Corollary~\ref{bound-kappa}, 
\begin{align}
 &\ga_{\nu_{j+1}}(c) \geq \ga_{\nu_{j+1}-1}(c) - (d-1) \al_{\nu_{j+1}}(c) - \frac{C}{\nu_{j+1}} \\
 &\geq \left(\ga_{\nu_{j}-1}(c) (\nu_j-1) + (1-\ka) \al_{\nu_j}(c) \nu_j + \ga_H \ell_j \right) \frac{1}{\nu_{j+1}-1} \\
 &- (d-1)\ti{\al}_{\nu_{j+1}}(A) - \frac{C}{\nu_{j+1}} \\
 &\geq \left( \ga_{\nu_j-1}(A) \left( 1 - \frac{1}{\nu_j} \right) + (d-1)\al_{\nu_j}(c) \left( 1 - \frac{1}{\nu_j} \right)\right)\frac{\nu_j}{\nu_{j+1}-1} \\
 &+ \left( (1-\ka) \al_{\nu_j}(c) + \ga_H \frac{\ell_j}{\nu_j} \right) \frac{\nu_j}{\nu_{j+1}-1} 
 - (d-1) \frac{\ka'}{\ti{\ka}} \al_{\nu_j}(c) \frac{\nu_j}{\nu_{j+1}} - \frac{C}{\nu_{j+1}} \\
 &\geq \left( \ga_{\nu_j-1}(A) \left( 1 - \frac{1}{\nu_j} \right) + \ga_H \frac{\ell_j}{\nu_j} - \frac{C}{\nu_{j}} \right) \frac{\nu_j}{\nu_{j+1}-1} \\
&+ \left((d-1) \left( 1 - \frac{1}{\nu_j} - \frac{\ka'}{\ti{\ka}} \right) + (1-\ka)\right) \al_{\nu_j}(c) \frac{\nu_j}{\nu_{j+1}-1}. 
 \end{align}
 Note also that, since $\ka' < \ti{\ka}$, 
 \[
  \frac{\ga_{\nu_j-1}(A) + 2C}{\nu_j} \leq  \left( (d-1) \left( 1 - \frac{1}{\nu_j} - \frac{\ka'}{\tilde{\ka}} \right) + (1-\ka) \right) \al_{\nu_j}
 \]
 for sufficiently large $\nu_j \geq N$, since $\al_{\nu_j}(c) \nu_j \sim r_{\nu_j} \geq \De$ and $\De$ is large.
Then, using that $p_j \leq \nu_j$ by Lemma~\ref{bound-length}, we get $\ga_{\nu_{j+1}}(c) \geq \uli{\ga}_{\nu_{j+1}}(A)$. If we define $\uli{\ga}_k(A) = \uli{\ga}_{\nu_{j+1}}(A)$ for $\nu_{j} < k \leq \nu_{j+1}$, then it follows that 
 \begin{equation} \label{gamma-bar-ineq2}
  \ga_{k}(c) \geq \uli{\ga}_{k}(c) + (d-1) \al_{k+1}(c), \text{ for $\nu_j < k < \nu_{j+1}$.}
  \end{equation}
  So at returns we have the weaker inequality $\ga_{\nu_{j+1}}(c) \geq \uli{\ga}_{\nu_{j+1}}(A)$. With these definitions, we have that $\al_j(c) \leq \ti{\al}_{j}(A)$ and $\ga_k(c) \geq \uli{\ga}_k(c)$, with possible equality only at returns.
We also define $\uli{\ga}_j = \uli{\ga}$ for $j < N$. 
  We now make the following definition.


 \begin{Def}
  Parameters satisfying
  \[
   |\xi_j(c)| = e^{-\al_j(c) j},
  \]
  where $\al_j(c) \leq \ti{\al}_j(A)$ for all $c \in A=A_j(c)$ and for all $j \leq n$, are called parameters satisfying the {\em basic approach rate condition until time $n$}. We denote such parameters with $\BB_n$. 
  \end{Def}

 We will continue updating $\uli{\ga}_n$ and $\tilde{\alpha}_n$ as above until we reach a certain situation as described as follows. Let us now consider the consecutive free returns $N=\nu_0, \nu_1, \nu_2, \ldots$.
  
 We see that, if $A$ is a partition element at time $\nu_{j-1}$, 
 \[
  \frac{\al_{\nu_j}(c)}{\ga_{\nu_j}(c)} \leq 
\frac{\ti{\al}_{\nu_j}(A)}{\uli{\ga}_{\nu_j}(A)} \leq \ka'^j \frac{\ti{\al}_{\nu_0}(A)}{\uli{\ga}_{\nu_0}(A)}, \qquad \text{ where $c \in A$}. 
  \]
  Now we choose the smallest $J \geq 0$ so that, $\ti{\al}_{\nu_J} \leq \ti{C} \uli{\ga}_{\nu_J}$, where $\ti{C}$ is sufficiently small (depending on the starting function etc.), defined later. So $\nu_J$ is a time when the $\al_{\nu_J}(c)$ is (sufficiently) small compared to $\uli{\ga}_{\nu_J}$. The return times $\nu_J=\nu_j(c)$ depend on the parameter and are of course also constant on each partition element. We call $\nu_J = \nu_J(c)$ the end of the promotion, or the time period of the promotion for the corresponding parameter; the $\al_j$ becomes {\em promoted} in a sense to a small $\al=\al_{\nu_J}$.

It may seem that the same $\ka$ cannot be used during the promotion period, since the Lyapunov exponent after time $N$ may go down. However, since $1 > \ka' > \ka$ we keep only parameters that return further out from the critical point, i.e. if $\xi_{\nu_j}(A_j)$ is an essential return, and $A_j \subset \QQ$ is a partition element, then $\xi_{\nu_{j+1}}(A_j)$ either has reached the large scale before it returns (good situation), is an inessential return, or an essential return. If it is inessential or essential, the a partition element in $A_j$, let us call it $A_{j+1} \subset A_j$, after deletion by the basic approach rate condition, will satisfy
 \[
   |\xi_{\nu_{j+1}}(c)| \geq e^{-\ka' r_j} \gg e^{-r_j} \sim |\xi_{\nu_j}(c)|,
 \]
 for $c \in A_{j+1}$. Hence by Lemma~\ref{bound-order} the corresponding bound periods satisfy $p_{j+1} \leq p_j \leq \ldots \leq p_0 < N$ (the last inequality follows from Lemma~\ref{bound-length}). But this means that the corresponding $\ga_{p_j} \geq \uli{\ga}$, hence Corollary~\ref{bound-kappa} can be used during the whole promotion period for all returns $\nu_j$ up until $j=J$. For later use we put
 \[
  \ga_I = \min \{\uli{\ga}, \ga_H\} \frac{\uli{\ga} (1-\ka')}{2\al_{\nu_0}d + \uli{\ga} (1 -\ka')}. 
\]
 It will be shown that this is a lower Lyapunov exponent during the promotion period.  
  
 The next observation is that the measure of parameters deleted between two consecutive essential returns during the promotion period is exponentially small in terms of the return time of the former return. See Lemma~8.1 in~\cite{MA7}. 

 \begin{Lem} \label{basic-param}
  Let $\xi_{\nu}(A)$ be an essential return and $\nu$ belonging to the promotion period for $A \subset \EE_{\nu}(\ga_I) \cap \BB_{\nu}$ and let $\xi_{\nu'}(A)$ be the next essential return. Then if $\hat{A}$ is the set of parameters in $A$ that satisfy the basic approach rate condition ($\BB_{\nu'}$) at time $\nu'$, we have
  \[
m(\hat{A}) \geq \left(1-e^{-(3/2)(\ka' - \ka) \al_{\nu} \nu}\right)m(A). 
\]
\end{Lem}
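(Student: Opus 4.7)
The plan is to transfer the measure of parameters deleted from $A$ at time $\nu'$ to the dynamical image $\xi_{\nu'}(A)$ via strong distortion, and then use the essential return structure at $\nu$ together with the bound period expansion to bound $\diam(\xi_{\nu'}(A))$ from below.

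First, by Lemma \ref{freein}, no deletion occurs at any inessential return in $(\nu, \nu')$, and since the partition only refines at essential returns, $A$ remains a single partition element throughout $[\nu, \nu']$. Writing $r = \al_\nu(A)\, \nu$ (comparable to $\al_\nu(c)\,\nu$ for all $c \in A$ by the near-constancy of $\al_\nu$ on partition elements), the recursion \eqref{ta} gives $\wt{\al}_{\nu'}(A)\, \nu' = \ka' \al_\nu(A)\, \nu = \ka' r$. Consequently the deleted set is precisely
\[
D = \{c \in A : |\xi_{\nu'}(c)| < e^{-\ka' r}\}, \qquad \xi_{\nu'}(D) \subset D(0, e^{-\ka' r}).
\]

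Since $A \subset \EE_\nu(\ga_I) \cap \BB_\nu$ remains a partition element up to time $\nu'$, the Main Distortion Lemma \ref{md} together with the strong distortion estimate \eqref{strong-dist} imply that $\xi_{\nu'}$ has bounded distortion on $A$; consequently,
\[
\frac{m(D)}{m(A)} \leq C \, \frac{m(\xi_{\nu'}(D))}{m(\xi_{\nu'}(A))} \leq \frac{C'\, e^{-2\ka' r}}{\diam(\xi_{\nu'}(A))^2}.
\]
For the denominator, the essentiality of the return at $\nu$ gives $\diam(\xi_\nu(A)) \geq \tfrac{1}{2} e^{-r}/r^2$ by \eqref{essele}. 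Applying Lemma \ref{adjacent-returns} between $\nu$ and the next free return, together with the bound period expansion $|Df_c^{p+1}(\xi_\nu(c))| \geq e^{(1-\ka)r}$ from Corollary \ref{bound-kappa}, yields $\diam(\xi_{\nu'}(A)) \geq C\, e^{-\ka r}/r^2$ (subsequent free and bound periods between that return and $\nu'$ only contribute further expansion, never contraction, again by Lemma \ref{adjacent-returns} and Lemma \ref{oel}). Substituting,
\[
\frac{m(D)}{m(A)} \leq C''\, r^4\, e^{-2(\ka'-\ka) r} \leq e^{-(3/2)(\ka'-\ka) r},
\]
where in the last step the polynomial factor is absorbed since $r = \al_\nu \nu \geq \De$ is large enough for $r^4 \leq e^{(1/2)(\ka'-\ka)r}$. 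This yields $m(\hat{A}) = m(A) - m(D) \geq \bigl(1 - e^{-(3/2)(\ka'-\ka) \al_\nu \nu}\bigr) m(A)$, as claimed.

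The main technical obstacle is verifying that the hypotheses of the Main Distortion Lemma \ref{md} hold all the way up to time $\nu'$, i.e.\ across possibly many intermediate inessential returns: this is guaranteed by the inductive derivative lower bound $\ga_k(c) \geq \uli{\ga}_k(A) \geq \ga_I$ that is maintained on $A$ throughout the promotion period (via \eqref{gamma-bar-ineq2} and the choice of $\ga_I$), so that $A \subset \EE_{\nu'}(\ga_I/3)$ as required by the lemma, and the weak parameter dependence \eqref{weak-dist} can be upgraded to the strong form \eqref{strong-dist} used above.
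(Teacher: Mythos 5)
Your argument is correct only in the special case where there are no inessential free returns between $\nu$ and $\nu'$, and that is precisely the case the lemma is not really about. The threshold $\wt{\al}$ is updated via \eqref{ta} at \emph{every} free return, including inessential ones (the paper states this explicitly right after \eqref{ta}); only the exclusions are postponed to essential returns. So if there are $s\geq 1$ inessential returns $m_1<\cdots<m_s$ between $\nu$ and $\nu'$, with depths $r_1,\dots,r_s$, the threshold actually in force at $\nu'$ is $\wt{\al}_{\nu'}(A)\,\nu' \approx \ka' r_s$, not $\ka' r$. By Lemma \ref{freein} these intermediate returns are necessarily much shallower, $r_{j} \leq \ka r_{j-1}$, hence $r_s\leq \ka^s r$, so the true deletion region at time $\nu'$ is the preimage of a disc of radius $e^{-\ka' r_s}\gg e^{-\ka' r}$. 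Your set $D=\{c\in A: |\xi_{\nu'}(c)|<e^{-\ka' r}\}$ is therefore in general a proper subset of $A\sm\hat{A}$, and your estimate does not control the measure that actually has to be removed.

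This missing case is exactly where the paper's proof does its work: it pairs the enlarged deletion disc with the additional expansion accumulated during the bound periods of the $s$ inessential returns, namely an extra factor $e^{(1-\ka)(r_1+\cdots+r_s)}$ on top of the $e^{-\ka r}$ you use (Lemma \ref{gogo}/Corollary \ref{bound-kappa} applied at each $m_j$, combined with the relation between consecutive $r_j$ from Lemma \ref{freein}), and then verifies that the resulting net exponent, of the form $\ka'^{s+1}-1-\ka+\ka^{-s}$ in the paper's bookkeeping, still dominates $\ka'-\ka$ for every $s\geq 0$; this balancing is what yields the uniform rate $e^{-(3/2)(\ka'-\ka)\al_\nu\nu}$. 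Your distortion-transfer step (comparing parameter measure to squared phase diameters via Lemma \ref{md} and \eqref{strong-dist}) and your computation in the $s=0$ case coincide with the paper's; what is missing is the trade-off argument for $s\geq 1$, and without it the stated bound is not established.
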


 \begin{proof}
We know that $\xi_{\nu}(A)$ grows rapidly during the bound period $p$. By Lemmas~\ref{bound-dist} and~\ref{md} and the definition of the bound period, we get, for any $c\in A$,
  \begin{equation}
   \diam(\xi_{\nu+p+1}(A)) \sim \frac{e^{-2r}}{r^{2}} \vert Df_{c}^p(\xi_{\nu+1}(c))\vert \geq e^{-\ka r},
  \end{equation}
  where $r = \al_{\nu} \nu$ and $\al_{\nu} = \al_{\nu}(A)$. 

If there are no inessential returns between $\nu$ and $\nu'$, then those parameters that get deleted satisfy
  $\al_{\nu'} = \al_{\nu'}(c) > \ti{\al}_{\nu'}(A) = \ka' (\nu/\nu') \al_{\nu}(A)$. 
  However, if we have, say, $s$ inessential returns between $\nu$ and $\nu'$, then we get $\al_{\nu'} > \ka'^{s+1} (\nu/\nu')\al_{\nu}$ (including the last return $\nu'$ which is essential). The growth between each pair of inessential returns $\xi_{m_j}(A)$ and $\xi_{m_{j+1}}(A)$ can be estimated by
  \[
|Df_{c}^{p_j}(\xi_{m_j}(c))| \geq e^{(1-\ka)r_j},
   \]
   where, by definition (see proof of Lemma~\ref{freein}), $\ka r_j \geq r_{j-1} $. So the total diameter becomes
   \begin{align}
    \diam(\xi_{\nu'}(A)) &\geq e^{-\ka r + (1-\ka)r_1 + (1-\ka)r_2 + \ldots + (1-\ka) r_s} \\
    &\geq e^{-\ka r + (1-\ka)r(\ka^{-1} + \ldots \ka^{-s})} = e^{ - r(\ka + 1 - \ka^{-s}) }.
    \end{align}

Note that by Lemma~\ref{freein}, no parameters are deleted during inessential returns. 
   
Recall that $r = \al_{\nu}\nu$. By the main distortion Lemma~\ref{md} together with Lemma~\ref{levin}, we see that the measure of parameters deleted at time $\nu'$ is 
  \begin{align}
   \frac{ m(A) - m(\hat{A})}{m(A)} &\leq C\, \frac{(2e^{-\al_{\nu'} \nu'})^2}{\diam(\xi_{\nu'}(A))^2} \\
   &\leq
   4Ce^{-2 \al_{\nu'} \nu' + 2 (\ka + 1 - \ka^{-s}) \al_{\nu} \nu} \leq 4C e^{-2(\ka'^{s+1} - 1- \ka +\ka^{-s} ) \al_{\nu} \nu},
  \end{align}
 where $C$ is a distortion constant (close to $1$). Since $\kappa'^{s+1} + \kappa^{-s}$ increases in $s \geq 0$ we have $\ka'^{s+1} - 1 - \ka + \ka^{-s} \geq \ka'-\ka > 0$ for all $s \geq 0$. This finishes the proof since $\al_{\nu} \nu \sim r$ is large (so that the constant $4C$ in front is eaten up). 
 \end{proof}
 We end this section by summarising the results above. We are going to continue the promotion until $\al_{\nu_j}$ is small enough, but not too small. With a prescribed $\al=\al_{\nu_J} > 0$ (defined later) we get the following, if $\al_{\nu_j} \geq \al$. 
  \begin{Prop}
   Let $\nu_0=N$ as before. Then during the promotion period, from $\nu_0$ until $\nu_J$, the set of parameters that satisfy $\BB_{\nu_J}$, has measure at least
   \[
 \left[\prod_{j=0}^{J-1}\left(1-e^{-(3/2)(\ka'-\ka) \al_{\nu_j} \nu_j}\right)\right] (1-\delta_0)m(\QQ) = (1-\de_1)(1-\delta_0) m(\QQ).
 \]
   \end{Prop}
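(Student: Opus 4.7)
The plan is to iterate Lemma \ref{basic-param} over all essential returns $\nu_0 = N, \nu_1, \ldots, \nu_{J-1}$ during the promotion period, multiply the resulting surviving-measure fractions, and combine with the initial deletion at the start-up time $N$. First I would observe that the deletion at time $N$ of parameters violating \eqref{appr2-ess} removes at most $\de_0 m(\QQ)$, so the union of surviving partition elements $A_N$ has measure at least $(1-\de_0) m(\QQ)$. By Lemma \ref{freein}, no further deletions occur at inessential returns, since the basic approach rate condition \eqref{tilde-alpha} is automatically satisfied there. Hence all subsequent deletions are concentrated at the essential returns among $\nu_0, \nu_1, \ldots, \nu_{J-1}$.

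Next I would argue inductively in $j$ that every partition element $A = A_{\nu_j}$ surviving into $\BB_{\nu_j}$ still lies in $\EE_{\nu_j}(\ga_I)$, so that the hypothesis of Lemma \ref{basic-param} is met. This step relies on the construction of $\uli{\ga}_{\nu_j}(A)$ carried out earlier in this section, together with the pointwise inequality \eqref{gamma-bar-ineq2}, and on the specific choice of $\ga_I = \min\{\uli{\ga},\ga_H\} \cdot \uli{\ga}(1-\ka')/(2\al_{\nu_0}d + \uli{\ga}(1-\ka'))$, which is calibrated precisely so that the recursion for $\uli{\ga}_{\nu_j}$ does not drop below $\ga_I$ during the promotion. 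Granting this, Lemma \ref{basic-param} applied to $A_{\nu_j}$ yields a subset of measure at least $(1 - e^{-(3/2)(\ka'-\ka)\al_{\nu_j}\nu_j}) m(A_{\nu_j})$ satisfying $\BB_{\nu_{j+1}}$. Summing over all partition elements at time $\nu_j$ and iterating for $j = 0, 1, \ldots, J-1$ gives
\[
m(\BB_{\nu_J}) \;\geq\; \left[\prod_{j=0}^{J-1} \left(1 - e^{-(3/2)(\ka'-\ka)\al_{\nu_j}\nu_j}\right)\right] (1-\de_0)\, m(\QQ).
\]

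To identify this product with $(1-\de_1)$ for some small $\de_1 > 0$, I would bound its logarithm from below by $-C \sum_j e^{-(3/2)(\ka'-\ka) r_j}$ where $r_j = \al_{\nu_j}\nu_j \geq \De'$. Grouping essential returns by the integer part of $r_j$ and using Lemma \ref{adjacent-returns} together with Lemma \ref{bound-order} to bound the number of essential returns occurring at any fixed scale $\sim e^{-r}$, this sum is dominated by a convergent geometric series $\sum_{r \geq \De'} C\, e^{-(3/2)(\ka'-\ka) r}$, which is exponentially small in $\De'$ and so can be made as small as desired. The main obstacle I expect is the inductive propagation of the Collet--Eckmann exponent $\ga_I$ through every essential return, since both the $(d-1)\al_{\nu_{j+1}}$ loss and the normalising factor $\nu_j/(\nu_{j+1}-1)$ degrade $\uli{\ga}_{\nu_j}$; this is precisely what the delicate definition of $\ga_I$ and the assumed lower bound \eqref{gamma-bar-ineq2} are designed to absorb.
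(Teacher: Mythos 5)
Your proposal is correct and follows essentially the same route as the paper: the Proposition is there a summary whose (implicit) proof is precisely the iteration of Lemma \ref{basic-param} over the essential returns of the promotion period, combined with Lemma \ref{freein} (no deletions at inessential returns) and the initial $\delta_0$-deletion at time $N$, the equality with $(1-\de_1)$ being simply the definition of $\de_1$. Your closing estimate that $\de_1$ is small goes beyond what the statement requires (the paper only remarks this afterwards), and is most directly justified not via Lemmas \ref{adjacent-returns} and \ref{bound-order} but via the promotion rule $r_{j+1}\leq \ka' r_j$ for surviving parameters, which makes the deleted fractions summable against a geometric series in $r\geq \Delta'$.
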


   So the remaining parameters constitute a finite union of squares, were the $\al=\al_{\nu_J}(c)$ are sufficiently small in order to use the large deviation argument. Note that $\de_1$, the portion deleted, can be made arbitrarily small by decreasing the size of $\QQ$. Heuristically, the method of promotion indicates that all CE-maps are density points of slowly recurrent maps, since one can choose $\al > 0$ arbitrarily small (possibly diminishing the diameter of $\QQ$).

\section{After promotion}

We are going to consider iterates from some starting time $m_0=\nu_J$, which is the end of the promotion period (it could be $N$, but not necessarily) up to $(1+\io)m_0$ for some suitable $\io > 0$.

\subsection{Uniform bound of the promotion period} \label{large-dev}

In this section we choose $\ti{C}$ so that $\al_{\nu_J}/\uli{\ga}_{\nu_J}$ is small enough. Let us see how the Lyapunov exponent changes during the time of promotion. During the promotion period between $N=\nu_0$ and $\nu_J=m_0$ the Lyapunov exponent cannot drop too much, since all the bound periods $p_j$ are bounded by $p_0$, by Lemma~\ref{bound-order}. But we have something stronger. Recall that $\uli{\ga}_j$ is also defined for $j < N$, namely $\uli{\ga}_j = \uli{\ga}$, and $ \ga_{p_j} \geq (d-1) \al_{p_j+1} + \uli{\ga}_{p_j}$, where $\uli{\ga}_{p_j}=\uli{\ga}$ since $p_j \leq p_0 < N$. By Lemma~\ref{bound-exp}, we have, if $0 < \eta < 1$,
  \[
   p_{j+1} \leq \frac{(1 + \eta) dr_{j+1}}{\ga_{p_{j+1}-1} + \al_{p_{j+1}} + \be} \leq \frac{2d \ka' r_j}{\uli{\ga}} \leq \frac{2d \ka'^j r_0}{\uli{\ga}}.
  \]
  So the sum of all bound periods during the promotion time is bounded by
  \[
\sum_{j=0}^J p_j \leq \sum_{j=0}^J \frac{2d \ka'^j r_0}{\uli{\ga}} \leq \frac{2d r_0}{\uli{\ga}} \frac{1}{1- \ka'}.
   \]

If we set the Lyapunov exponent to be zero at the bound periods in the promotion period, and count only the free periods $\ell_j$ after $\nu_0$, we get the following estimate of the Lyapunov exponent after the promotion period, if $\sum_j \ell_j$ is all the free periods after $\nu_0$: 
\begin{equation} \label{gammaI}
\ga_{\nu_J} \geq \frac{\uli{\ga} \nu_0 + \ga_H \sum_j \ell_j}{\frac{2d \al_{\nu_0}}{\uli{\ga}} \frac{1}{1- \ka'} \nu_0 + \nu_0 + \sum_j \ell_j} \geq \min\{\uli{\ga},\ga_H\} \frac{\uli{\ga} (1-\ka')}{2 \al_{\nu_0} d + \uli{\ga} (1- \ka')} = \ga_I.
\end{equation}

\begin{Rem}
According to Remark~\ref{p-not-return}, $p_j$ cannot be a return time, but $p_j+1$ can. Moreover, if the bound period ends so that $p_j+1$ is not a return time, then $\xi_{\nu_j+p_j}(A_{\nu_j})$ has grown to size $\sim e^{-\be p_j} e^{-\al_{p_j}p_j}$. But since $\xi_{\nu_j+p_j}(A_{\nu_j}) \cap U = \emptyset$, we have $\al_{p_j} \approx 0$ and since $r_j = \al_{\nu_j}\nu_j$,
   \begin{align}
    \diam(\xi_{\nu_j+p_j}(A_{\nu_j})) &\sim e^{-\be p_j} \sim e^{-\frac{2d \be}{\ga_{p_j-1}+\be} r_j} \nonumber \\
                  &= e^{-\frac{2d \be \al_{\nu_j}}{\ga_{p_j-1}+\be} \nu_j} \geq e^{-\frac{2d \be \al_{\nu_j}}{\ga_{p_j-1}+\be} (\nu_j+p_j)} \geq
                   e^{-\frac{2d \be \al_{\nu_j}}{\uli{\ga}}(\nu_j+p_j)}.
    \end{align}
    So for a prescribed $\al > 0$ which we are looking for after promotion has ended, if $\be > 0$ is chosen sufficiently small, then $\xi_{\nu_j+p_j}(A_{\nu_j})$ has already reached a large size and the promotion period has ended. We could choose $\be$ so that, for instance, 
    \[
     \frac{\al}{2} > \frac{2d \be \al_{\nu_0}}{\uli{\ga}} \geq \frac{2d \be \al_{\nu_j}}{\uli{\ga}}.
     \]
     This means that we may assume that during the promotion period, all essential return occur directly after the bound returns. In particular, we may assume that $\sum_j \ell_j = 0$ in~\eqref{gammaI}. 
\end{Rem}

We may assume without loss of generality that $\ga_H \geq \uli{\ga}$. Note also that $\ga_I$ only depends on the starting map and $\ka'$, since $\al_{\nu_0} < \oli{\ga} - \uli{\ga}$. Set
 \[
\hat{\ka} = 1 -\frac{\ga_{I}}{4 \oli{\ga}}.
\]
This is an updated $\ka$, since we cannot guarantee that $\ga \geq \uli{\ga}$ anymore. We want to choose $\al=\al_{\nu_J}$ so small such that
\begin{equation} \label{alpha}
 0 < \frac{\al}{\ga_I} \leq \min\left\{\frac{1}{16d},\, \frac{\ga_C^3}{1000 d \oli{\ga}^2 }\right\},
\end{equation}
where $\ga_C = \min\{\ga_I/(12d), \ga_H \}$.

\subsection{Large deviations}

As in previous papers,~\cite{BC1, BC2} and~\cite{ABC-1} we define escape time as follows. 

\begin{Def}
We say that $\xi_{n}(A_{n-1})$ has escaped or is in escape position, if $n$ does not belong to a bound period and $\diam(\xi_{n}(A_{n-1})) \geq S$. We also speak of escape situation for $A_{n-1}$ and say that $A_{n-1}$ has escaped if $\xi_{n}(A_{n-1}) $ has escaped. 
\end{Def}

Escape situations are the ideal situations since it means that a partition element has grown to the large scale under $\xi_n$. If we are lucky, even $\QQ$ grows to the large scale, and then $\xi_n(\QQ)$ covers mostly the Fatou set of $f_0$, and by a compactness argument, we can conclude that most parameters in $\QQ$ will land in the Fatou set of the corresponding map under $\xi_n$. Hence a very large portion of parameters are hyperbolic in $\QQ$. If $\xi_n(\QQ)$ does not escape, we have to continue with the ordinary partitions. Also since the starting map is assumed to not be slowly recurrent, we want to delete parameters so that the remaining parameters are ``promoted'' in a way that after a finite number of free returns, these parameters have reached a kind of slow recurrence condition.

In the following lemma, we estimate the maximum escape time for essential returns after promotion in an interval $[m_0,2m_0]$. Note that during this period, the exponent cannot drop below $\ga_I/4$. 
\begin{Lem}[q-Lemma]
 Given an essential return $\xi_{\nu}(A) \subset U$, where $A \subset \EE_{\nu}(\ga_I/4) \cap \BB_{\nu}$, $\diam(\xi_{\nu}(A)) \sim e^{-r}$ and $\nu \in [m_0,2m_0]$. Let $n$ be the smallest number so that either $\xi_n(A)$ is an essential return or has escaped. Then, for $q = n-\nu$, we have
 \[
q \leq \ti{M} r,
  \]
where $\ti{M}$ is a constant only depending on the unperturbed map and $\ka'$. 
\end{Lem}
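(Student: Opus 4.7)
The plan is to decompose $q = n - \nu$ into contributions from bound periods and free periods and bound each by $O(r)$. Since $n$ is by definition the first essential return or escape after $\nu$, every free return $\xi_{\nu_k}(A)$ to $U'$ in the open interval $(\nu,n)$ is inessential. Enumerate these as $\nu = \nu_0 < \nu_1 < \cdots < \nu_K < n$, write $p_k$ and $\ell_k$ for the associated bound and free periods (with $\ell_K$ the free stretch from the end of $p_K$ up to $n$), and set $r_k = \al_{\nu_k}(c)\,\nu_k$, so that $r_0 = r$.

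The first step is to derive a geometric decay $r_{k+1} \leq \theta r_k$ for some fixed $\theta < 1$ depending only on the data, which also forces $K = O(\log r)$ since $r_k \geq \De'$ at each return. By the post-promotion analog of Corollary~\ref{bound-kappa} (with $\hat\ka$ replacing $\ka$), $|Df_c^{p_k+1}(\xi_{\nu_k}(c))| \geq e^{(1-\hat\ka)r_k}$; combined with the strong distortion estimate~\eqref{strong-dist},
\[
\diam\bigl(\xi_{\nu_k+p_k+1}(A)\bigr) \gtrsim e^{-\hat\ka r_k}/r_k^{2}.
\]
During the subsequent free period the orbit stays outside $U$, so by Lemma~\ref{oel} the diameter grows by at least a factor $e^{\ga_H \ell_k}$. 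The inessential-return condition at $\nu_{k+1}$ forces $\diam(\xi_{\nu_{k+1}}(A)) \leq \tfrac{1}{2} e^{-r_{k+1}}/r_{k+1}^2$, and comparing the two bounds gives
\[
\ga_H \ell_k + r_{k+1} \leq \hat\ka r_k + O(\log r_k),
\]
which yields both $r_{k+1} \leq \theta r_k$ and, upon telescoping across $k = 0, \ldots, K-1$, the estimate $\sum_{k<K} \ell_k \leq \hat\ka r/\ga_H + O(\log^2 r) = O(r)$.

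Second, Lemma~\ref{bound-exp} gives $p_k \leq C r_k/\ga_I$ (the post-promotion exponent $\ga_I/4$ at $p_k-1$ providing the relevant lower bound), so the geometric decay in $r_k$ yields $\sum_{k=0}^K p_k \leq Cr/(\ga_I(1-\theta))$. For the final free stretch $\ell_K$ up to time $n$: since $A$ is still a partition element at time $n-1$, $\diam(\xi_{n-1}(A)) \leq S$, and hence $\diam(\xi_n(A)) \leq e^{\oli\ga} S$ is bounded by a fixed constant; the same diameter comparison then yields $\ga_H \ell_K \leq \hat\ka r_K + O(1) = O(r)$. Adding the three sums,
\[
q \;\leq\; \sum_{k=0}^K p_k \;+\; \sum_{k=0}^K \ell_k \;+\; (K+1) \;\leq\; \ti{M}\, r,
\]
for a constant $\ti{M}$ depending only on $\ga_I$, $\ga_H$ and $\hat\ka$, i.e.\ on the unperturbed map and $\ka'$.

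The subtle point is controlling the expansion rate through many consecutive inessential returns: a blind iteration of Lemma~\ref{adjacent-returns} alone would force the Lyapunov exponent to drop by a factor of order $3d$ at every return and degenerate rapidly. The way around this is that the $\ga_H$-expansion of Lemma~\ref{oel} on each free segment is intrinsic to the map on $\NN_{\vep_0}\setminus U$ and is insensitive to the history along the critical orbit, while the bound periods satisfy $p_k \leq p_0$ uniformly by Lemma~\ref{bound-order}, so the post-promotion Corollary~\ref{bound-kappa} applies unchanged at each $\nu_k$. That is precisely what makes the telescoping above close cleanly.
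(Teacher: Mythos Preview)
Your overall strategy---decompose $q$ into bound and free pieces and show each is $O(r)$---is the right one, and coincides with the paper's. But the key step, the geometric decay $r_{k+1}\leq\theta r_k$, is not valid as argued and is in fact false in general.

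The diameter comparison you use to derive $\ga_H\ell_k+r_{k+1}\leq\hat\ka r_k+O(\log r_k)$ needs the lower bound $\diam(\xi_{\nu_k+p_k+1}(A))\gtrsim e^{-\hat\ka r_k}/r_k^2$, which in turn requires $\diam(\xi_{\nu_k}(A))\gtrsim e^{-r_k}/r_k^2$. This holds at the \emph{essential} return $\nu_0$ by \eqref{essele}, but for the inessential returns $\nu_k$, $k\geq1$, you only have the \emph{upper} bound $\diam(\xi_{\nu_k}(A))<\tfrac12 e^{-r_k}/r_k^2$. Concretely: if $r_1$ happens to be small (say close to $\De$), the diameter at $\nu_1$ is still only $\sim e^{-\hat\ka r_0}$, so the bound-period gain $e^{(1-\hat\ka)r_1}$ is tiny and $r_2$ can be as large as $\hat\ka r_0-(1-\hat\ka)r_1\approx\hat\ka r_0\gg r_1$. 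So $r_{k+1}\leq\theta r_k$ can fail.

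What \emph{is} true is a telescoping bound: chain the derivative estimates once from the essential return $\nu_0$ all the way to $n$, using $\diam(\xi_{\nu_0}(A))\sim e^{-r}/r^2$ and $\diam(\xi_n(A))\leq 1$, to obtain
\[
\sum_{j\geq1}(1-\hat\ka)r_j+\sum_{j\geq0}\ga_H\ell_j\;\lesssim\;\hat\ka r,
\]
which gives $\sum_j r_j=O(r)$ and $\sum_j\ell_j=O(r)$ in one stroke. This is exactly the paper's route. With that in hand your bound $\sum p_k\lesssim\sum r_k/\ga_I$ goes through unchanged, and the inductive control $r_k\leq r_0$ (hence $p_k\leq p_0\leq m_0$ via Lemma~\ref{bound-order}) justifies applying the $\hat\ka$-version of Corollary~\ref{bound-kappa} at each $\nu_k$.

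A secondary point: your treatment of the terminal stretch when $n$ is an escape rather than a return glosses over pseudo-returns in $U'\setminus U$. The paper handles these explicitly, which forces trading $\ga_H$ for the smaller $\ga_C$ and absorbing a $(d-1)\De$ loss at the last pseudo-return; this is what dictates the final value of $\ti M$.
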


\begin{proof}
 
If the bound period $p \leq \nu_J=m_0$, we have, by the definition of the bound period, replacing $\uli{\ga}$ with $\ga_I$ in the proof of Corollary~\ref{bound-kappa} (see also Lemma~\ref{bound-exp}), for all $c \in A$, 
 \begin{equation} \label{b-exp}
  \left|Df_c^{p+1}(\xi_{\nu}(c))\right| \geq e^{(1-\hat{\ka}) r}.
\end{equation}
Let $m_j$ be the inessential returns after $\nu$, i.e. $\nu < m_1 < m_2 < \ldots < m_s < n$. Let $p_j$ and $q_j$ be the bound and free periods respectively following $m_j$. Let $p_0$ and $q_0$ be the bound and free periods following the return $\nu$. It can happen that escape takes place before a return takes place, and then $q_s$ is not a complete free period. It can also happen that $n$ is a time during the bound period for $m_s$. But then we have $m_s+p_s - \nu$ as an upper bound for $q$ and we can assume that $q > m_s+p_s$.

Suppose that $\dist(0, \xi_{m_j}(A)) \sim e^{-r_j}$, and let $r=r_0$. Suppose that $n=\nu'$ is a return. As long as the bound period is bounded by $(8 d \al /\ga_I)\nu \leq \nu/2 \leq m_0$, since $\al = \al_{\nu_J}$ and $\al/\uli{\ga}_I \leq 1/(16d)$, we can use the same estimate as~\eqref{b-exp}, the Main Distortion Lemma~\ref{md} and Lemma~\ref{oel}, to obtain
\begin{align}
 \diam(\xi_{n}(A)) &\sim |Df_c^{n - \nu}(\xi_{\nu}(c))| \diam(\xi_{\nu}(A)) \nonumber \\
 &=
  \prod_{j=0}^{s} |Df_c^{p_j}(\xi_{m_j}(c))| C' e^{\ga_H q_j} \diam(\xi_{\nu}(A)) \nonumber \\
 &\geq
  e^{\ka r} C' e^{\ga_H q_0} \diam(\xi_{\nu}(A)) \prod_{j=1}^s e^{r_j(1-\hat{\ka})} \prod_{j=1}^s C' e^{q_j \ga_H}.  \label{last}
\end{align}

If $n$ was not a return, then let $q_1 < \ldots < q_t$ be the pseudo-returns after $m_s+p_s$. Between each pair of pseudo-returns we have uniform expansion of the derivative according to Lemma~\ref{adjacent-returns}. Between $m_s+p_s$ and $q_1$ we also have uniform expansion according to Lemma~\ref{oel}. So we only need to consider the last time period, from $q_t$ to $n$. Since $\xi_{q_t}(A)$ may belong to $U' \sm U$ we have $|Df_c(\xi_{q_t}(c))| \geq e^{-(d-1) \De}$ for all $c \in A$. After time $q_t$ we can use the binding information, Lemma~\ref{bound-dist} and the first statement of Lemma~\ref{oel} with $U = U'$, depending on whether $n$ belongs to the bound period or not. In any case we get uniform expansion; $|Df_c^{n-q_t-1}(\xi_{q_t+1}(c))| \geq C e^{\ga_H (n-q_t-1)}$, where $C$ does not depend on $\de$ (but possibly $\de'$). In other words, with $z = \xi_{m_s+p_s}(c)$, for $c \in A$, we have
\begin{align}
 |Df_c^{n-(m_s+p_s)}(z)| &= |Df_c^{q_1-(m_s+p_s)}(z)| |Df_c^{q_2-q_1}(f_c^{q_1-(m_s+p_s)}(z))| \nonumber \\
  &\cdot \ldots \cdot |Df_c^{q_t-q_{t-1}}(f_c^{q_{t-1}-(m_s+p_s)}(z))| |Df_c(f_c^{q_t-(m_s+p_s)}(z))| \nonumber \\
 &\cdot |Df_c^{n-q_t-1}(f_c^{q_t-(m_s+p_s) + 1}(z))| \nonumber \\
             &\geq C' e^{\ga_H (q_1 -(m_s+p_s))} e^{\ga_2 (q_t-q_1)} e^{-(d-1) \De} C e^{\ga_H (n-q_t-1)} \nonumber \\
              &\geq e^{\ga_C (n-(m_s+p_s))} e^{-(d-1)\De},
\end{align}
where $\ga_2 \geq \ga_I /(12d) \geq \ga_C$.
So we may have to replace $q_s \ga_H$ with $q_s \ga_C- (d-1) \De$ in~\eqref{last}, where $q_s = n-(m_s+p_s)$ in this case. 
Since $\ga_C \leq \ga_H$, and $\diam\xi_{n}(A)$ is assumed to be at most $S = \vep_1 \de \leq 1$, we therefore get
\[
\sum_{j=1}^s r_j (1-\hat{\ka}) + \sum_{j=0}^s q_j \ga_C \leq r \hat{\ka} + (d-1) \De. 
\]
Hence, if $q = \sum_{j=0}^s p_j + \sum_{j=0}^s q_j$, we get, by Lemma~\ref{bound-exp} ($p \leq 2dr /(\ga_{p-1} + \al_p + \be)$), 
\begin{align}
 q - p_0 &= \sum_{j=1}^s p_j + \sum_{j=0}^s q_j \leq \sum_{j=1}^s \frac{(1+\eta)d}{\ga_{p_j - 1} + \al_{p_j} + \be} r_j + \sum_{j=0}^s q_j \nonumber \\
  &\leq \sum_{j=1}^s \frac{(1+\eta)d}{\uli{\ga}_{\nu_J} (1-\hat{\ka})} (1 - \hat{\ka}) r_j + \frac{1}{\ga_C} \sum_{j=0}^s q_j \ga_C \nonumber \\
 &\leq \max \biggl\{ \frac{(1+\eta)}{(1-\hat{\ka})\uli{\ga}_{\nu_J}}, \frac{1}{\ga_C} \biggr\}
 \biggl( \sum_{j=1}^s r_j (1-\hat{\ka} ) + \sum_{j=0}^s q_j\ga_C \biggr) \nonumber \\
  &\leq \max \biggl\{ \frac{(1+\eta)}{(1-\hat{\ka}) \uli{\ga}_{\nu_J}}, \frac{1}{\ga_C} \biggr\} \biggl( \hat{\ka} r +(d-1) \De \biggr)\\ &\leq M r,
\end{align}
where $M = 2d/ (\ga_C(1-\hat{\ka}))$. Moreover, $p_0 \leq 8dr/\ga_I$ (since the exponent $\ga \geq \ga_I/4)$, so
\[
q \leq Mr + \frac{2d}{\ga_I}r \leq \frac{4d}{(1-\hat{\ka})\ga_C}r \leq \frac{16 d \oli{\ga}}{\ga_C^2}r.
\]
From~\eqref{gammaI} and since $(d-1) \al_{\nu_0} < \oli{\ga} - \uli{\ga}$, we see that $\ga_I \geq C_I \uli{\ga}$, where $C_I$ only depends on the starting map and $\ka'$. Hence $\ga_C$ only depends on the starting map. We now put
\[
 \ti{M} = \frac{16 d \oli{\ga}}{\ga_C^2}.
 \]
With $\al = \al_{\nu_J}$, and the conditions on $\al$ (see~\eqref{alpha}) we obtain 
\[
\ti{M} \al \leq \frac{1}{2}. 
 \]
Since the total time is bounded from above by $\nu + \ti{M} r \leq \nu + \ti{M} \al_{\nu} \nu \leq (3/2) \nu$, we can use the binding information the whole time.
\end{proof}

We will now estimate the set of parameters that do not escape after a long time.
\begin{Def}
A partition element $A$ at time $n$ that contains the parameter $c$ is denoted by $A_n(c)$. Given an essential return $\xi_{\nu}(A)$, and $c \in A=A_{\nu}(c)$ we define the {\em escape time} $E(\nu,c)$ to be the time $n-\nu$ where $\xi_{n}(A_{n-1}(c))$ is in escape position for the first time after $\nu$. 
\end{Def}

By Lemma~\ref{startup}, we can assume without loss of generality that $\xi_N(\QQ)$ is an essential return. Otherwise, if $\xi_N(\QQ)$ is in escape position, we can go directly to the next section. The point is that most parameters in $\QQ$ escape before a certain time after $N$. Suppose $m_0 = \nu_J$ is an essential return time for a partition element $A=A_0$. We now want to choose $\io$ so that the Lyapunov exponent cannot drop under $\ga_C$ anytime in the time window $[m_0,(1+\io)m_0]$.

Suppose that $\nu_J = m_0 < m_1 < \ldots < m_s < (1+\io) m_0$ are consecutive essential returns for a parameter $a \in \QQ$. Let $A_j$ be a partition element for an essential return $\xi_{m_j}(A_j) \subset U$ where $\dist(0,\xi_{m_j}(A_j)) \sim e^{-r_j}$. Then, by Lemma~\ref{bound-exp}, 
\[
\frac{m(A_{j+1})}{m(A_j)} \leq C \frac{e^{-2r_{j+1}}}{e^{-2\hat{\ka} r_j}}.
 \]
 Consequently, 
 \[
\frac{m(A_s)}{m(A_0)} \leq C^s \frac{e^{2\sum_{j=0}^{s-1} -r_{j+1}}}{e^{-2\hat{\ka} \sum_{j=0}^{s-1} r_j}} \leq C^s e^{2\hat{\ka} r_0 - 2(1-\hat{\ka}) \sum_{j=1}^s r_j}.
  \]
  Putting $R = r_1 + \ldots + r_s$ we have, by the q-Lemma,
  \[
E(m_0,c) \leq \sum_{j=0}^s \ti{M} r_j \leq \ti{M} r_0 + \ti{M} R.
   \]

   Following the old traditions (the ideas go back to the works~\cite{BC1, BC2}), we now compute the number of combinations of returns, i.e. the number of combinations of $R = r_1 + \ldots + r_s$ where $R$ is fixed. This can be estimated with Stirling's formula: If $\De$ is sufficiently large, then
\[
\binom{R+s-1}{s-1} \leq e^{R (1-\hat{\ka})/3} (1 + \eta(\De))^R,
\]
where $\eta(\De) \raw 0$ as $\De \raw \infty$. Let $A_{s,R}$ be the set of parameters in $A_0$ that have exactly $s$ free essential returns in the time interval $[m_0,(1+\io)m_0]$ before escape, where $R=r_1 + \ldots + r_s$. Let $\hat{A}$ be the largest of all squares in $A_{s,R}$. Then,
\[
m(A_{s,R}) \leq m(\hat{A}) e^{R(1-\hat{\ka})/3} (1+\eta(\De))^R.
\]
So,
\begin{align}
 &m\left(\left\{ c \in A_0 : E(m_0,c) = t \right\}\right) \\
 &\leq \sum_{R \geq \frac{t}{\ti{M}}-r, s \leq R/\De} m(A_{s,R}) \\
 &\leq \sum_{R \geq \frac{t}{\ti{M}}-r} \sum_{s = 1}^{R/\De} m(\hat{A}) e^{R(1-\hat{\ka})/3} (1+\eta(\De))^R\\
                  &\leq m(A_0) \sum_{R \geq \frac{t}{\ti{M}}-r} C' e^{R(1-\hat{\ka})/3} (1+\eta(\De))^R C^{R/\De}e^{2 \hat{\ka} r - 2(1-\hat{\ka})R} \\
                  &\leq C m(A_0) \sum_{R \geq \frac{t}{\ti{M}} - r} e^{R(1-\hat{\ka})/3 + R \ln (1 + \eta(\De)) + (R/\De) \log C + 2\hat{\ka} r - 2(1-\hat{\ka}) R} \\
                  &\leq C m(A_0) \sum_{R \geq \frac{t}{\ti{M}}-r} e^{-\frac{3}{2}(1-\hat{\ka}) R + 2\hat{\ka} r} \\
                  &\leq C m(A_0) e^{-\frac{3}{2}(1-\hat{\ka})(\frac{t}{\ti{M}}-r) + 2\hat{\ka} r} \\
 &= C m(A_0) e^{-\frac{3}{2}(1-\hat{\ka}) \frac{t}{\ti{M}} + \frac{r}{2}(3+\hat{\ka})} .
 \end{align}
 So if $t \geq 2\ti{M} \frac{3+\hat{\ka}}{1-\hat{\ka}} r$ then we have an estimate of the measure for parameters with long escape times (recall $r = \al m_0$):
 \[
m\left(\{ c \in A_0 : E(m_0,c) = t \}\right) \leq C m(A_0) e^{-2(3+\hat{\ka})r} = C m(A_0) e^{-2 (3+\hat{\ka}) \al m_0}, 
\]
or
\[
m(\{ c \in A_0 : E(m_0,c) = t \}) \leq C m(A_0) e^{- (1-\hat{\ka}) \frac{t}{\ti{M}}}.
 \]

 This means that a big portion of $A_0$ will reach the large scale within the time period $[m_0, (1+\io)m_0]$, where $\io = 10 \ti{M} \frac{3+\hat{\ka}}{1-\hat{\ka}} \al \leq 1$ (see the condition on $\al$ in~\eqref{alpha}). Let this portion be $1-\de_2$, where $0 < \de_2 < 1$, is small. Again note that $\de_2$ can be chosen as close to zero as we want.

 Since this is true for every such $A_0 \subset \QQ$, we have that the measure of parameters that reach the large scale within time $(1+\io)m_0$ is at least
 \[
m(\QQ)(1-\delta_0) (1-\de_1)(1-\de_2).
  \]

  \comm{
Now we argue that the Lyapunov exponent cannot drop too much. During the promotion period between $N$ and $m_0$ the Lyapunov exponent cannot drop too much, since all the bound periods $p_j$ are bounded by $p_0$, by Lemma~\ref{bound-order}. We have something stronger. Recall that $ \ga_{p_j} \geq (d-1) \al_{p_j+1} + \uli{\ga}$, where $\uli{\ga}_{p_j}=\uli{\ga}$ since $p_j \leq p_0 \leq n$. By Lemma~\ref{bound-exp} we have,
  \[
   p_{j+1} \leq \frac{dr_{j+1}}{\ga_{p_{j+1}-1} + \al_{p_{j+1}} + \be} \leq \frac{d \ka' r_j}{\uli{\ga}} \leq \frac{d \ka'^j r_0}{\uli{\ga}},
  \]
  So the sum of all bound periods during the promotion time is bounded by
  \[
\sum_{j=0}^J p_j \leq \sum_{j=0}^J \frac{d \ka'^j r_0}{\uli{\ga}} \leq \frac{d r_0}{\uli{\ga}} \frac{1}{1- \ka'}.
   \]
   According to Remark~\ref{p-not-return}, $p_j$ cannot be a return time, but $p_j+1$ can. Moreover, if the bound period ends so that $p_j+1$ is not a return time, then the interval $\xi_{\nu_j+p_j}(\om)$ has grown to size $\sim e^{-\be p_j} e^{-\al_{p_j}p_j}$. But since $\xi_{\nu_j+p_j}(\om) \cap U = \emptyset$, we have $\al_{p_j} \approx 0$ and since $r_j = \al_{\nu_j}\nu_j$,
   \[
    \diam(\xi_{\nu_j+p_j}(\om)) \sim e^{-\be p_j} \sim e^{-\frac{2d \be}{\ga_{p_j-1}+\be} r_j} = e^{-\frac{2d \be \al_{\nu_j}}{\ga_{p_j-1}+\be} \nu_j} \geq e^{-\frac{2d \be \al_{\nu_j}}{\ga_{p_j-1}+\be} (\nu_j+p_j)}.
    \]
    So for a prescribed $\al > 0$ which we are looking for after promotion has ended, if $\be > 0$ is chosen sufficiently small, then $\xi_{\nu_j+p_j}(\om)$ has already reached a large size and the promotion period has ended with $\al > \frac{2d \be \al_{\nu_j}}{\ga_{p_j-1}+\be}$.

    Therefore, if we use the trivial lower bound of exponent zero during the bound periods, and assume that the free periods are empty, the Lyapunov exponent during the promotion time cannot be lower than
   \[
\ga' = \frac{\uli{\ga} \nu_0}{\frac{d \al_{\nu_0}}{\uli{\ga}} \frac{1}{1- \ka'} \nu_0 + \nu_0} = \frac{\uli{\ga}^2 (1-\ka')}{\al_{\nu_0} d + \uli{\ga} (1- \ka')}.
    \]
   }

    From $m_0$ to $m_0(1+\io)$, we do not loose too much either, since the Lyapunov exponent during this time cannot drop below $\ga_I/(2(1+\io)) \geq \ga_C$. Hence we have good distortion control for all parameters satisfying the basic approach rate assumption until $(1+\io)m_0$.

\section{Hyperbolic maps}

Now to prove our theorems, we can argue similarly as in~\cite{ABC-1}. Let $\FF(f_{c_0})$ be the Fatou set for the unperturbed map $f_{c_0}$ which is Collet--Eckmann, and let $\NN_{\vep_0}$ be an $\vep_0$-neighbourhood of $\JJ(f_{c_0})$ as defined in~\eqref{nbjulia}. Here, $\vep_0$ can be chosen arbitrarily close to zero since the Hausdorff dimension of the Julia set of $f_{c_0}$ is strictly less than two and therefore has zero area~\cite[Corollary 2]{GS}. Then since the Fatou set of $f_{c_0}$ consists only of the attracting basin of $\infty$, any compact subset of the Fatou set is stable under perturbation. So $\widehat{\C} \sm \NN_{\vep_0}$ is still contained the Fatou set $\FF(f_c)$ for all $c \in \QQ$, provided that $\QQ$ is chosen small enough. By bounded distortion
 \begin{equation}\label{fulldense}
m\left(\{ c \in \QQ: c \in \FF(f_c) \}\right) \geq (1-\de_0)(1-\de_1)(1-\de_2)(1-\de_3) m(\QQ),
  \end{equation}
where $\de_3$ is a constant depending on the distortion of $\xi_n(c)$ on $\QQ$. Since also $0 < \de_3 < 1$ can be arbitrarily small, it follows that the Lebesgue density of hyperbolic maps at $c=c_0$ is equal to $1$, by making the diameter of $\QQ$ tending to zero.

To see that Theorem~\ref{mainthm} actually holds, we note that for all parameter $c\in \QQ$ satisfying $c\in \FF(f_c)$, the critical point is attracted by the super-attracting fixed point at $\infty$. Therefore,~\eqref{fulldense} actually gives an estimate of parameters which lie outside of $\MM_d$.

\begin{Rem}
For Collet--Eckmann rational maps with precisely one critical point in the Julia set, our proof works completely analogously. In this case, the parameter space is higher dimensional. Instead of using the transversality result by Avila and also Levin mentioned before, we shall need a more general result by Levin~\cite{Levin-transv}. To get Lebesgue density of hyperbolic maps, we consider one-dimensional slices in the parameter space and get the same estimate as above.
\end{Rem}

\begin{Rem}
In the setting of rational maps with precisely one critical point in the Julia set, the Collet--Eckmann condition is equivalent to several other well known non-uniform hyperbolicity conditions, e.g., the topological Collet--Eckmann condition, the second Collet--Eckmann condition, uniform hyperbolicity on periodic orbits, etc, see~\cite{PRS}. So our result applies to these maps as well. 
\end{Rem}

\begin{Rem}
 The methods of promotion and the following parameter exclusion are likely to generalise to all rational CE-maps. Since the recurrence exponent $\al > 0$ after the promotion period can be chosen arbitrarily small (with possibly diminishing the diameter of $\QQ$), taking into consideration that there may be several critical points in the Julia set (see~\cite{MA7},~\cite{ABC-1}), one could prove that every rational CE-map for which $J(f) = \hat{\C}$ is a Lebesgue density point of slowly recurrent CE-maps, even for real-analytic families of maps containing the starting CE-map (cf.~\cite{Lefevre-thesis} where complex one-dimensional families were considered). 
\end{Rem}


\comm{
\subsection{Geometry}

The construction in the proof also gives some information on self-similarity between the Mandelbrot set and the Julia set of $f_{c_0}$. Indeed, on each partition element $A_j$ which has grown to the large scale after $n_j$ iterates, we have seen that the set 
\[
\xi_{n_j}^{-1} (\xi_{n_j}(A_j) \cap N_{\vep_0}^c) \subset \MM^c.
\]
So locally we get
\[
\MM \cap A_j \subset \xi_{n_j}^{-1} (\xi_{n_j}(A_j) \cap N_{\vep_0}).
\]
Hence if $\xi_{n_j}(A_j)$ covers $J(f_{c_0})$ we get a small copy of a part of the Julia set of $f_{c_0}$ inside $\MM \cap A_j$. 
Taking unions over all partition elements in $\QQ$, including the deleted ones, we get 
\begin{equation} \label{mandel-q}
\MM \cap \QQ \subset \bigcup_{j} \xi_{n_j}^{-1} (\xi_{n_j}(A_j) \cap N_{\vep_0}).
\end{equation}
We can say a little more. 
\begin{Lem}
 Suppose that $A_j$ is open, that $\xi_{n_j}(A_j)$ has reached the large scale and that $\xi_{n_j}(A_j) \cap J_{f_{c_0}} \neq \emptyset$. Now, suppose that $A_j' \subset A_j$ is a smaller open square, such that $\diam(\xi_{n_j}(A_j')) \geq 10 \eta$ and $\xi_{n_j}(c) \cap J_{f_{c_0}} \neq \emptyset$ for some $c \in A_j'$ well-inside $A_j'$, so that $\dist(\xi_{n_j}(c), \partial \xi_{n_j}(A_j')) \geq \eta$. Then $A_j'$ must intersect $\MM$. 
 \end{Lem}
\begin{proof}
 Suppose it was not the case, i.e. $A_j' \subset \MM^c$. Put $S_{\eta} = \xi_{n_j}(A_j')$. Then it means that $A_j' = \xi_n^{-1}(S_{\eta}) \subset \MM^c$. Recall that the Julia set for parameters outside $\MM$ moves holomorphically (it is in fact a Cantor set that approximates $J(f_{c_0})$). Moreover, by a result by A. Douady~\cite{---}, it also moves continuously at $c_0$. Take a point $p$ in the Julia set $J(f_{c_1})$ for some $c_1 \in A_j'$ (not at the boundary). Then $p$ moves holomorphically, so $p = p(c)$ can be continued analytically and $p(c) \in J(f_{c})$ for $c \in A_j'$. But since $J(f_c)$ also moves continuously, choosing $\vep$ sufficiently small, we have that $p(c)$ must be close to $J(f_{c_0})$ for all $c \in A_j'$. Choose for instance $p$ to be a iterated preimage to a repelling periodic point of $f_{c_0}$ that persists inside $\QQ$ (take e.g. the $\al$-fixed point). Then given any $\eta$ it is possible to choose $\vep$ so that the movement of $p$ is bounded: $|p(c)-p(c_0)| \leq \eta/2$ for all $c \in \QQ$. Hence, if $\vep$ is small enough, $p(c) \in \xi_{n_j}(A_j')$ for all $c \in A_j'$. We claim that there must be a parameter $c_2 \in A_j'$ such that
\[
 \xi_{n_j}(c_2) = p(c_2). 
\]
This would mean that $c_2 \in \MM$. If there is no such parameter we proceed as follows. Note that $\xi_{n_j}:\oli{A_j'} \raw \oli{\xi_{n_j}(A_j')}$ is injective (due to the strong distortion property) and $p:\oli{A_j'} \raw \oli{p(A_j')} \subset \xi_{n_j}(A_j')$, since the movement of $p$ is bounded by $\eta/2$. The function $F: \oli{\xi_{n_j}(A_j')} \raw \oli{\xi_{n_j}(A_j')}$ defined by $p \circ \xi_{n_j}^{-1}$ is continuous and a strict inclusion, and therefore the Brouwer fixed point theorem implies that $F$ has a fixed point. In other words, there is some $c_2 \in A_j'$ such that $\xi_{n_j}(c_2) = p(c_2)$, a contradiction.
\end{proof}

We would like to think of the partition elements as a grid of squares that will be coloured black or white according to the rule; black if they intersect $\MM$ and white if they do not intersect $\MM$. The lemma says that as long as $\xi_{n_j}(A_j')$ intersects $J(f_{c_0})$ ``non-trivially'', then $A_j'$ must intersect $\MM$. On the other hand, clearly, if $\xi_{n_j}(A_j') \subset N_{\eta}^c$ then all $c \in A_j'$ are hyperbolic and $A_j' \subset \MM^c$. So this marking is correct up to some error. However, what could happen is that $\xi_{n_j}(A_j')$ intersects
$J(f_{c_0})$ very close to the boundary of $\xi_{n_j}(A_j')$ and hence, as the lemma implies, $A_j'$ could still be outside $\MM$. But then one could merge $4$ squares $A_j'$ into a new square $B_j$ which then would have that $\xi_{n_j}(B_j)$ intersects $J(f_{c_0})$ ``non-trivially''. 

It could also happen that some $A_j' \subset \MM^c$, that $\xi_{n_j}(A_j') \subset N_{\eta}^c$ and still $A_j' \cap J(f_{c_0}) \neq \emptyset$.
That would mark this $A_j'$ white albeit it intersects $J(f_{c_0})$. This makes the set
\[
E = \bigcup_{j} \xi_{n_j}^{-1} (\xi_{n_j}(A_j) \cap N_{\vep_0})
\]
a quite good approximation of $\MM$ inside $\QQ$.
}

\bibliographystyle{plain}
\bibliography{ref}

\bigskip

\noindent {\bf Magnus Aspenberg}\\
Centre for Mathematical Sciences\\
Lund University, Box 118, 22 100 Lund, Sweden

\smallskip
 
\noindent{magnus.aspenberg@math.lth.se}

\bigskip

\noindent {\bf Mats Bylund}\\
Laboratoire de Math\'ematiques d'Orsay\\
Universit\'e Paris--Saclay, 91405 Orsay Cedex, France

\smallskip

\noindent{mats.bylund@universite-paris-saclay.fr}

\bigskip

\noindent {\bf Weiwei Cui}\\
 Research Center for Mathematics and Interdisciplinary Sciences\\
Frontiers Science Center for Nonlinear Expectations, Ministry of Education\\
Shandong University, Qingdao, 266237, China.

\smallskip

\noindent{weiwei.cui@sdu.edu.cn}

\end{document}